\documentclass[11pt,reqno,oneside,a4paper]{amsart}
\usepackage[utf8]{inputenc}
\usepackage{amsfonts, amsmath, amssymb}
\usepackage{amsthm}
\usepackage{mathtools}
\usepackage{fullpage}
\usepackage{xcolor}
\usepackage{tikz-cd}
\usepackage{scalerel}
\usepackage{hyperref}
\usetikzlibrary{calc}
\usetikzlibrary{shapes.geometric}
\usetikzlibrary{decorations.pathmorphing}
\usepackage[all]{xy}
\usepackage[shortlabels]{enumitem}
\usepackage{bm}
\usepackage[normalem]{ulem} 
\usepackage{mathrsfs}
\usepackage{comment}
\usepackage{accents} 
\usepackage[scr=boondox]{mathalpha}
\usepackage{array}
\newcolumntype{C}[1]{>{\centering\arraybackslash}p{#1}}

\newcommand{\sfw}{\mathsf{w}}

\let\oldLeftrightarrow\Leftrightarrow
\renewcommand{\Leftrightarrow}{\mathrel{\;\oldLeftrightarrow\;}}

\newtheorem{theorem}[equation]{Theorem}
\newtheorem{lemma}[equation]{Lemma}

\newtheorem{proposition}[equation]{Proposition}

\theoremstyle{definition}
\newtheorem{defn}[equation]{Definition}

\newtheorem{example}[equation]{Example}

\newtheorem*{theorem*}{Theorem}

{
\theoremstyle{definition}
\newtheorem*{example*}{Example}
}

\newcommand{\DMO}{\DeclareMathOperator}

\newcommand{\beq}{\begin{equation}}
\newcommand{\eeq}{\end{equation}}

\newcommand\restr[2]{{
  \left.\kern-\nulldelimiterspace 
  #1 
  \right|_{#2} 
  }}
  
\renewcommand{\leq}{\leqslant}
\renewcommand{\geq}{\geqslant}

\DeclareMathOperator{\id}{{id}}

\DeclareMathOperator{\Ext}{{Ext}}

\DeclareMathOperator{\Sym}{{Sym}}

\DeclareMathOperator{\spann}{span}

\DeclareMathOperator{\hilb}{hilb}

\DeclareMathOperator{\sgn}{sgn}
\DeclareMathOperator{\inv}{inv}
\DeclareMathOperator{\NF}{NF}

\newcommand{\ZZ}{{\mathbb Z}}

\newcommand{\NN}{{\mathbb N}}

\DeclareMathOperator{\GKdim}{GKdim}
\DMO{\wt}{wt}
\DMO{\gldim}{gldim}

\newcommand{\tikzAngleOfLine}{\tikz@AngleOfLine}
\def\tikz@AngleOfLine(#1)(#2)#3{%
\pgfmathanglebetweenpoints{%
\pgfpointanchor{#1}{center}}{%
\pgfpointanchor{#2}{center}}
\pgfmathsetmacro{#3}{\pgfmathresult}%
}

\newcommand{\exterior}{\mathop{\raisebox{1pt}{$\bigwedge$}}\nolimits}

\DMO{\supp}{supp}
\renewcommand{\geq}{\geqslant}
\renewcommand{\leq}{\leqslant}
\DMO{\Rep}{Rep}

\DeclareMathOperator{\GL}{GL}

\makeatletter
\newcommand{\vast}{\bBigg@{2.5}}
\newcommand{\Vast}{\bBigg@{5}}
\makeatother

\makeatletter
\newcommand{\leqnomode}{\tagsleft@true\let\veqno\@@leqno}
\makeatother

\allowdisplaybreaks 

\title{Dual Reflection Groups from Solutions to the Yang--Baxter Equation}
\author{Simon Crawford}
\address{The University of Manchester, Alan Turing Building, Oxford Road, Manchester, M13 9PL, United Kingdom}
\email{simon.crawford@manchester.ac.uk}

\subjclass[2020]{16W22, 16T25, 16S37 (primary); 16W50, 16T05, 16S38 (secondary).}
\keywords{Yang--Baxter equation, dual reflection group, group coaction, Artin-Schelter regular algebra, noncommutative invariant theory.}

\begin{document}

\begin{abstract}
Suppose that $r: X^2 \to X^2$ is a nondegenerate, involutive solution to the set-theoretic Yang--Baxter equation. To this data, we can associate an Artin--Schelter regular algebra $A$ and an infinite family of groups $G_k$, where $k \geqslant 1$. For each $k$, we show that $G_k$ is a dual reflection group for $A$, in the sense the component of $A$ graded by $1_{G_k}$ is again Artin--Schelter regular. We also show that, using the additional structure afforded by the underlying solution to the Yang--Baxter equation, one can determine the centre, Koszul complex, and associated twisted superpotential for $A$.
\end{abstract}

\maketitle

\setcounter{tocdepth}{3}
\let\oldtocsection=\tocsection
\let\oldtocsubsection=\tocsubsection
\let\oldtocsubsubsection=\tocsubsubsection
\renewcommand{\tocsection}[2]{\hspace{0em}\oldtocsection{#1}{#2}}
\renewcommand{\tocsubsection}[2]{\hspace{1.5em}\oldtocsubsection{#1}{#2}}
\renewcommand{\tocsubsubsection}[2]{\hspace{3em}\oldtocsubsubsection{#1}{#2}}



\numberwithin{equation}{section}
\section{Introduction}

One of the recent major focuses of noncommutative invariant theory has been the search for generalisations of the Chevalley--Shephard--Todd Theorem to actions of Hopf algebras on noncommutative algebras. The classical Chevalley--Shephard--Todd Theorem from commutative algebra says that, if $G$ is a finite group of graded automorphisms acting on a polynomial ring $R$, then the invariant ring $R^G$ is also a polynomial ring if and only if $G$ is generated by reflections. 

In noncommutative invariant theory, the role of a ``noncommutative polynomial ring'' is typically played by an Artin--Schelter (AS) regular algebra. However, due to the inherent rigidity of noncommutative algebras, their symmetries are generally not adequately captured by groups, and we must also allow for ``quantum symmetries'' afforded by Hopf algebras; see \cite{etingof}. Accordingly, we say that a (typically finite-dimensional and semisimple) Hopf algebra is a \emph{reflection Hopf algebra} for an AS regular algebra $A$ if the invariant ring $A^H$ is again AS regular. See \cite{three, ferraro} for examples of this phenomenon.

Of particular recent interest has been the case where $H = (\Bbbk G)^*$ is the dual of a finite-dimensional group algebra. An action of such a Hopf algebra on an AS regular algebra $A$ is equivalent to $A$ possessing a $G$-grading. In this case, the invariant ring $A^H$ is equal to the set of elements of $G$-degree $1_G$, and is called the \emph{subring of dual invariants}. When this ring is AS regular, we say that $G$ is a \emph{dual reflection group} for $A$. Examples of dual reflection groups have been studied in \cite{kirkmannakayama,chenrigidity,coactions}.

The motivation for the present work stemmed initially from a desire to generalise some of the results of \cite{goetz}, in which the authors identified two groups of order $16$ which are dual reflection groups for certain four-dimensional AS regular algebras. The main result of this paper, Theorem \ref{thm:IntroThm1}, shows that these examples fit into a much broader framework. The hidden feature of the examples in \cite{goetz} is that they arise from solutions to the Yang--Baxter equation; we show that \emph{every} such solution gives rise to an infinite family of dual reflection groups, giving a wealth of interesting examples. Moreover, the additional structure coming from the Yang--Baxter equation makes it much easier to establish AS regularity of the algebras studied in \cite{goetz}, as it allows one to apply the results of Gateva-Ivanova--Van den Bergh, \cite{gateva}, on semigroup algebras of $I$-type.

We briefly provide some context for the main result of \cite{gateva}, before stating it in a paraphrased form. If $X$ is a finite set and $r : X^2 \to X^2$ is a map, then we say that $r$ is a \emph{(set-theoretic) solution to the Yang--Baxter equation} if 
\begin{align*}
(r \times \id_X) ({\id_X} \times {r}) (r \times \id_X) = ({\id_X} \times r) (r \times \id_X) ({\id_X} \times r)
\end{align*}
as maps $X^3 \to X^3$. Associated to this data is the \emph{structure monoid}
\begin{align*}
M(X,r) \coloneqq \langle\; X \mid xy=uv \text{ whenever } r(x,y) = (u,v) \,\rangle_{\text{monoid}}.
\end{align*}
Gateva-Ivanova--Van den Bergh proved the following result, in which undefined terms are explained in Section \ref{sec:preliminaries}.

\begin{theorem}[{\cite[Theorem 1.5]{gateva}}] \label{thm:Gateva}
Let $X$ be a finite set of size $n$ and let $r : X^2 \to X^2$ be a solution to the Yang--Baxter equation which is involutive and nondegenerate. Suppose also that $\alpha : M(X,r) \times M(X,r) \to \Bbbk^\times$ is a monoid $2$-cocycle. Then the twisted semigroup algebra $A = \Bbbk^\alpha M(X,r)$ has the following properties:
\begin{enumerate}[{\normalfont (1)},topsep=0pt,itemsep=0pt,leftmargin=35pt]
\item $A$ is a noetherian domain;
\item $A$ is AS regular of global dimension $n$;
\item $A$ has Hilbert series $1/(1-t)^n$;
\item $A$ is Koszul; and
\item $A$ satisfies the Auslander condition and is GK-Cohen--Macaulay.
\end{enumerate}
\end{theorem}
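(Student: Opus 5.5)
This is the Gateva-Ivanova--Van den Bergh theorem, quoted with a twisting cocycle. I would prove it by reducing everything to the untwisted case and then running their argument there.

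\textbf{Step 1: structure of $M(X,r)$.} Since $r$ is involutive and nondegenerate, the monoid $M(X,r)$ is of $I$-type. Concretely:
\begin{itemize}
\item choose an ordering $x_1<\dots<x_n$ of $X$;
\item show there is a bijection $v: \NN^n \to M(X,r)$ with $v(0)=1$ and $\{v(a x_i)\}_{i} = \{v(a)x\}_{x\in X}$ for every $a \in \NN^n$.
\end{itemize}
To build $v$, I would use the left action $x \mapsto \sigma_x$ coming from $r(x,y) = (\sigma_x(y), \tau_y(x))$. Nondegeneracy makes each $\sigma_x$ bijective. Involutivity together with the Yang--Baxter equation makes $x\mapsto\sigma_x$ extend to a monoid action, giving a bijective $1$-cocycle from $M(X,r)$ to the free abelian monoid $\NN^n$. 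This bijection immediately gives the Hilbert series $1/(1-t)^n$, proving (3), and the same holds for any twist $\Bbbk^\alpha M$.

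\textbf{Step 2: reduce to $\alpha$ trivial.} A twisted semigroup algebra by a $2$-cocycle is a Zhang twist, or at least a cocycle twist, of the grading by $M$. I would argue that the properties (1)--(5) transfer along such twists. Alternatively, I would track $\alpha$ through each argument below, since every argument uses only the monomial basis.

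\textbf{Step 3: noetherian domain, and (5).}
\begin{itemize}
\item Filter $A$ by degree-lex order on $\NN^n$ via $v$. Then $\operatorname{gr} A$ is a skew polynomial ring, twisted by a finite permutation group action. It is noetherian and a domain, so $A$ is too, proving (1).
\item Alternatively, $A$ is a finite module over a commutative polynomial subring obtained from the permutation group $\mathcal G(X,r)=\langle\sigma_x\rangle$, which is finite.
\item Finiteness over a central polynomial subring also gives the Auslander condition and GK-Cohen--Macaulay, proving (5). This follows from the standard criteria for algebras finite over their centre, or from passing through $\operatorname{gr}$.
\end{itemize}

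\textbf{Step 4: Koszulity, then AS regularity.}
\begin{itemize}
\item Koszulity, (4): the relations $x y = \sigma_x(y)\tau_y(x)$ have a quadratic Gröbner basis for a suitable ordering. This makes $A$ PBW, hence Koszul.
\item Global dimension: it is then finite and equal to $n$, read off from the Hilbert series, which is $h_A(t)h_{A^!}(-t)=1$.
\item Gorenstein condition: I would either invoke the result that noetherian Koszul algebras of finite global dimension and polynomial growth whose Koszul dual is Frobenius are AS regular, or show directly that $A^!$ is Frobenius. For the latter, $A^!$ has Hilbert series $(1+t)^n$, and the top degree is spanned by one monomial, which yields a nondegenerate pairing. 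This gives (2).
\end{itemize}

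\textbf{Main obstacle.} I expect Step 1 to be the hardest part, namely constructing the $I$-type bijection $v$ and proving the Gröbner basis property from the braided structure. This is where the Yang--Baxter equation and involutivity are genuinely used. Everything after Step 1 is fairly standard machinery.
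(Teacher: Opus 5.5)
The paper gives no proof of this statement; it is quoted from Gateva-Ivanova--Van den Bergh. Your outline therefore has to stand on its own. Step~1 and (3) are fine, but there is a genuine gap in Step~4, and most of the other steps depend on it.

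\textbf{Step 4 (Koszulity, global dimension, Gorenstein).} You claim that the relations $xy=\sigma_x(y)\tau_y(x)$ have a quadratic Gr\"obner basis for a suitable ordering. This holds in special cases, such as Gateva-Ivanova's binomial skew polynomial rings, which come from square-free solutions. It is false in general, and the paper's first example shows it. Take $M=\langle x,y\mid x^2=y^2\rangle$. Any admissible order on words makes $x^2$ or $y^2$ the leading word, say $x^2$. The overlap $x\cdot x\cdot x$ then gives $xy^2-y^2x$, whose two words are distinct and reduced, so a cubic element must be added. This algebra is Koszul, but it becomes PBW only after the change of variables $u=x+y$, $v=x-y$, which turns the relation into $uv+vu$ and destroys the monomial structure you rely on. Since you deduce $\gldim A=n$ from Koszulity, and the Gorenstein condition from that, both (2) and (4) are left unproved.

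The Frobenius step is also wrong. A one-dimensional top degree of $A^!$ does not make the pairings $A^!_i\times A^!_{n-i}\to A^!_n$ nondegenerate. For example, $\Bbbk\langle x,y\rangle/(xy)$ is Koszul with $h_{A^!}(t)=(1+t)^2$ and $\dim A^!_2=1$. Yet $x^\vee A^!_1=0$, and this algebra is neither AS-Gorenstein nor a domain. Your two options for the Gorenstein condition are not really alternatives, since both require $A^!$ to be Frobenius.

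\textbf{Step 3 (domain, Auslander, Cohen--Macaulay).} In a monoid algebra, any filtration spanned by monomials has associated graded ring either isomorphic to $\Bbbk M$ (if the degree function is multiplicative) or with zero divisors. So no filtration ``via $v$'' produces anything simpler than $\Bbbk M$. In fact the rule $\nu(a)\nu(b)=\nu(a\,\psi_a^{-1}(b))$ is not even compatible with deg-lex, since $\psi_a^{-1}$ can move $b$ upwards.

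Finiteness over a central polynomial subring is correct for trivial $\alpha$, and gives noetherian and PI. Once finite global dimension is known, Stafford--Zhang's results on noetherian PI rings then supply domain, AS-Gorenstein, Auslander and Cohen--Macaulay.

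\textbf{Step 2 (the twisted case).} None of the PI route survives twisting. For instance, the trivial solution on $\{x,y\}$ with $\alpha(x^ay^b,x^cy^d)=q^{bc}$ gives the quantum plane $yx=qxy$, which is not PI for $q$ not a root of unity. The claim that (1)--(5) transfer along cocycle twists is exactly what needs proof, not a formal fact.

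\textbf{A repair for Koszulity.} This works for every $\alpha$, via Backelin's distributive-lattice criterion. As in the proof that $\eta$ is well defined (Dehornoy's Lemma~2.6), $\Pi_m$ intertwines the place transposition $s_i$ on $X^m$ with $r$ acting in positions $i,i+1$. It therefore induces a linear automorphism of $V^{\otimes m}$ carrying the subspaces $V^{\otimes (i-1)}\otimes\mathcal R\otimes V^{\otimes (m-i-1)}$ for the polynomial ring onto those for $\Bbbk M$. The rescaling $\mathbf x\mapsto\gamma_\alpha(\mathbf x)^{-1}\mathbf x$ of Lemma~\ref{lem:GammaProperty} then carries these onto those for $\Bbbk^\alpha M$. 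This gives (4), and then $\gldim A=n$ follows from $h_{A^!}=(1+t)^n$. The Gorenstein, domain and Auslander/Cohen--Macaulay parts for nontrivial $\alpha$ still need genuine arguments.
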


We refer to the algebras $\Bbbk^\alpha M$ as \emph{twisted structure algebras}, or simply \emph{structure algebras} when $\alpha$ is trivial. One can show (see Examples \ref{ex:Goetz1} and \ref{ex:Goetz2}) that the algebras studied in \cite{goetz} are (twisted) structure algebras for suitably chosen solutions to the Yang--Baxter equation, and hence have good ring-theoretic and homological properties.

In the years since the publication of \cite{gateva}, there has been a sustained interest in the Yang--Baxter equation and its relationship with semigroups, braces, cycle sets, and other algebraic structures; see \cite{rump2005,rump,chouraqui,dehornoy,gatevamultipermutation}. While there has been some focus on noncommutative rings arising from solutions to the Yang--Baxter equation, such as in \cite{gatevaquadratic}, there appears to have been less work from the perspective of noncommutative invariant theory and related areas.

\subsection{Main results}

For the remainder of the introduction, we write $X$ for a finite set of size $n$ and $r: X^2 \to X^2$ for a nondegenerate, involutive solution to the Yang--Baxter equation. We shall assume $r$ has Dehornoy class $c$; see Definition \ref{defn:DehornoyClass}. We also write $M(X,r)$ for the structure monoid associated to $r$, and let $\alpha : M(X,r) \times M(X,r) \to \Bbbk^\times$ be a normalised monoid $2$-cocycle.

Our first main result associates to $r : X^2 \to X^2$ an infinite family of groups which are dual reflection groups for the corresponding (twisted) structure algebra. This generalises results from \cite{goetz,buzaglo}. We stress, however, that much of the hard work in the proof of this result is due to \cite{dehornoy}.

\begin{theorem}[{Theorem \ref{thm:DualReflection}}] \label{thm:IntroThm1}
For each $k \geqslant 1$, to $r : X^2 \to X^2$ one can associate a certain finite group $G \coloneqq G(X,r,k)$ of order $(ck)^n$ which is a quotient of $M \coloneqq M(X,r)$. This endows $\Bbbk^\alpha M$ with a natural $G$-grading. The corresponding subring of dual invariants $(\Bbbk^\alpha M)_{1_G}$ is a quantum polynomial ring in $n$ variables; that is, $G$ is dual reflection group for $\Bbbk^\alpha M$.
\end{theorem}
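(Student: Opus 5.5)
The plan is to build $G$ from Dehornoy's results on the structure group. By \cite{dehornoy}, if $r$ has Dehornoy class $c$, then for every $k\geq 1$ the relation $x \equiv y$ iff $x$ and $y$ have the same image in the quotient of $M$ by the relations ``$x^{ck}$ is central and trivial'' behaves well. Concretely, I take $G = G(X,r,k)$ to be the quotient of the structure group $G(X,r)$ by the normal subgroup generated by $\{x^{ck} : x \in X\}$. Dehornoy shows that the elements $x^{c}$ (and so also $x^{ck}$) generate a free abelian normal subgroup of rank $n$, and that the quotient is finite of order $(ck)^n$. Using the I-type bijection $M \to \NN^n$, one sees that the composite $M \to G$ is surjective. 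Grading $\Bbbk^\alpha M$ by this map makes it $G$-graded: the twisted multiplication sends $m\cdot m'$ to a scalar times $mm'$, and the degree is multiplicative. This settles the existence claims.

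Next I identify the identity component. Let $N \subseteq M$ be the submonoid of elements mapping to $1_G$. I would show that $N$ is exactly the submonoid generated by $\{x^{ck} : x\in X\}$. By Dehornoy's results the $x^{c}$ commute pairwise, so $N$ is free commutative on these $n$ elements. For the inclusion $N \subseteq \langle x^{ck}\rangle$, I use that the kernel of $G(X,r)\to G$ is the subgroup $\Delta$ generated by the $x^{ck}$, which is central-up-to-permutation and free abelian. An element $m \in M$ lies in $N$ exactly when $m\in\Delta\cap M$. Via the $\NN^n$ coordinates, or equivalently Dehornoy's Garside structure in which $\Delta$ is a lattice of ``Garside-like'' elements, one checks that $\Delta \cap M$ is the positive cone generated by the $x^{ck}$.

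I expect this to be the main obstacle: ruling out elements of $M$ that lie in $\Delta$ but have negative exponents in the $x^{ck}$-basis. My first attempt would be a length argument. Since the relations of $M$ are homogeneous, there is a length homomorphism $G(X,r)\to\ZZ$. Then I would use Dehornoy's normal form: each element of $\Delta$ can be written uniquely as $\prod_x x^{ck e_x}$, and I would compare this with the right-complement/divisibility structure of $M$ to show that every $e_x\geq 0$.

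Finally, $(\Bbbk^\alpha M)_{1_G} = \Bbbk^\alpha N$ has basis $N$. Its generators $y_x = x^{ck}$ satisfy $y_x y_{x'} = \lambda_{x,x'}\, y_{x'} y_x$, where the scalar comes from the cocycle $\alpha$ (with trivial $\alpha$ the scalars are $1$). Since $N$ is free commutative on the $y_x$, the ordered monomials form a basis, so the identity component is a skew polynomial ring $\Bbbk_{\lambda}[y_1,\dots,y_n]$, which is a quantum polynomial ring. Hence $G$ is a dual reflection group.
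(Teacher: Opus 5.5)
There is a genuine gap at the first step: you quotient by the wrong elements. Dehornoy's results (Proposition \ref{prop:Coxeter} and his Lemma 5.5) concern the $I$-structure powers $\nu(x^{c}) = \Omega_1(x)\,\Omega_2(x,x)\cdots\Omega_c(x,\dots,x)$, not the ordinary powers $x^{c}$ in $M$. The paper's $G_k$ imposes $\nu(x^c)^k=\nu(x^{ck})=1$.

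For ordinary powers, the facts you attribute to Dehornoy are false. Take the paper's first example: $X=\{x,y\}$, $M=\langle x,y\mid x^2=y^2\rangle$, $c=2$. There $x^2=y^2$ in $M$, so the ordinary squares generate a subgroup of rank $1$, not $2$. Your quotient $\langle x,y\mid x^2=y^2=1\rangle$ is then the infinite dihedral group. The preimage of the identity in $M$ is $\{x^{2m}\}$, so the identity component is $\Bbbk[x^2]$, a polynomial ring in one variable. Similarly, in Example \ref{ex:BuzagloExample} one has $x^3=y^3=z^3=\Delta$. The correct elements in the first example are $\nu(x^2)=xy$ and $\nu(y^2)=yx$, which give a cyclic group of order $4$.

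Once you replace $x^{ck}$ by $\nu(x^{ck})$ throughout, your outline becomes the paper's proof, including the final step with the twisted commutation scalars. The paper handles the step you flagged as the main obstacle simply by citing Proposition \ref{prop:Coxeter}. It also has a short direct proof that needs no length or negative-exponent argument:
\begin{itemize}
\item Since $\psi_{x^{ck}}=\id_X$, Lemma \ref{lem:PsiToProduct} gives $\nu(b^{ck})=\prod_x \nu(x^{ck})^{b_x}$.
\item Write $a=r\,b^{ck}$ with $0\leq r_x<ck$. Then $\nu(a)=\nu(r)\,\nu(\psi_r(b)^{ck})$, so $\pi_k(\nu(a))=\pi_k(\nu(r))$.
\item The map $\pi_k$ is onto, and $|G_k|=(ck)^n$ equals the number of residues $r$, so the elements $\pi_k(\nu(r))$ are pairwise distinct.
\item Hence $\pi_k(\nu(a))=1$ if and only if $r=1$, that is, if and only if $\nu(a)$ lies in the free commutative monoid on the $\nu(x^{ck})$.
\end{itemize}
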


If $\alpha$ is trivial or, more generally, the pullback along the projection $\pi_k : M(X,r) \to G_k$ of some group $2$-cocycle $\overline{\alpha} : G_k \times G_k \to \Bbbk^\times$, then $A_{1_{G_k}}$ is a polynomial ring in $n$ variables.

While all of the relations in a structure algebra are binomial, performing calculations with elements can be quite tedious. For example, it can be highly nontrivial to determine the centres of these algebras. However, due to the additional structure on $\Bbbk M$ afforded by the underlying solution to the Yang--Baxter equation, we are able to describe the centre very explicitly, at least in the case where the cocycle twist is trivial.

Before stating our result, we introduce some notation. By \cite{rump2005}, if $r : X^2 \to X^2$ is a nondegenerate, involutive solution to the Yang--Baxter equation, then there exist permutations $\psi_a : X \to X$ for each $a \in \NN^X$ which satisfy
\begin{align*}
r(x,\psi_x(y)) = (y, \psi_y(x))
\end{align*}
for all $x,y \in X$.

\begin{theorem}[{Theorem \ref{thm:Centre}}]
With notation as above, write $H = \langle \psi_x \mid x \in X \rangle \leqslant \Sym(X)$ and $K = \{ a \in \NN^X \mid \psi_a = \id_X \}$. Also write $A = \Bbbk M$ and $R = \Bbbk[K] \subseteq \Bbbk [X]$. There is an isomorphism
\begin{align*}
Z(A) \cong R^H.
\end{align*}
\end{theorem}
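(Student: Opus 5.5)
The approach is to identify $\Bbbk M$ with a twisted group-like object over the free abelian monoid $\NN^X$ via the Rump/Etingof--Schedler--Soloviev bijection. Then the centre becomes a computation about which monomials in $\NN^X$ are central, and the answer is the $H$-invariants.

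\textbf{Step 1: a $\Bbbk$-basis of $\Bbbk M$ indexed by $\NN^X$.} For involutive nondegenerate $r$ there is a bijection $\NN^X \to M$, $a \mapsto m_a$, compatible with degree. Multiplication is given by
\begin{align*}
m_a m_b = m_{a + \psi_a(b)},
\end{align*}
where $\psi : \NN^X \to \Sym(X)$ extends the maps $\psi_x$ and acts on $\NN^X$ by permuting coordinates, with $\psi_{a+\psi_a(b)} = \psi_a\psi_b$. The sign and inverse conventions need to be matched to the defining identity $r(x,\psi_x(y)) = (y,\psi_y(x))$. As a consequence, $A = \Bbbk M$ has basis $\{m_a\}_{a\in\NN^X}$, and $H$ acts on this basis through its action on $\NN^X$. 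Since $\psi$ has image $H$, the set $K$ is a submonoid of $\NN^X$. For $a \in K$ we get $m_a m_b = m_{a+b}$, so $a \mapsto m_a$ restricted to $K$ is a monoid map. Hence $\Bbbk[K] \cong \spann\{m_a : a\in K\}$ is a commutative subalgebra of $A$.

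\textbf{Step 2: characterise central elements.} Take $z = \sum_a \lambda_a m_a$. The condition $z m_x = m_x z$ for every $x\in X$ reads
\begin{align*}
\sum_a \lambda_a m_{a+\psi_a(x)} = \sum_a \lambda_a m_{x+\psi_x(a)}.
\end{align*}
First I would show that a central element is supported on $K$. If $\psi_a \ne \id$, the monomial $m_a m_x$ should fail to match, and a grading argument makes this precise. Grade $A$ by the group $H$ (or use the finite quotient $G_k$ from Theorem~\ref{thm:IntroThm1}) via $m_a \mapsto \psi_a$. Conjugation $z \mapsto m_x z m_x^{-1}$ then acts on degrees by conjugation in $H$, so we need to pin the support down further. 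I would use that $A$ is a domain (Theorem~\ref{thm:Gateva}) and embeds in its Ore localisation, the group algebra of the structure group $G(X,r)$. In that group, $m_x m_a m_x^{-1} = m_{x+\psi_x(a)-\psi_{\,x+\psi_x(a)}\cdots}$, and the computation shows that centralising all $m_x$ forces $\psi_a(x)$ and $x$ to agree up to the permutation relation, pointwise on support orbits. Once $z$ is supported on $K$, commutation becomes $\sum_{a\in K} \lambda_a m_{a+x} = \sum_{a\in K}\lambda_a m_{x+\psi_x(a)}$. This holds for all $x$ precisely when $\lambda$ is constant on $H$-orbits, i.e.\ $\lambda_{\psi_x(a)} = \lambda_a$ for all generators $\psi_x$ of $H$. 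Note that $K$ is $H$-stable, since $\psi_{h(a)} = h\psi_a h^{-1}$ by the cocycle identity.

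\textbf{Step 3: conclude.} The map $R^H \to Z(A)$, $\sum \lambda_a a \mapsto \sum \lambda_a m_a$, is linear and injective. By Step 1 it is multiplicative, because $m_a m_b = m_{a+b}$ on $K$. By Step 2 it is surjective. This gives $Z(A) \cong R^H$.

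The main obstacle is the first half of Step 2: showing that central elements are supported on $K$. The difficulty is that different $a$ can collide, since $a + \psi_a(x) = b+\psi_b(x)$ is possible with $a\ne b$. I would first try to rule out cancellation by comparing leading terms under a total order on $\NN^X$ refining degree. Failing that, I would use that $m_a$ commutes with every $m_x$ exactly when $\psi_a$ is trivial on the relevant elements. Passing to the structure group, where the kernel of $\psi$ is the free abelian group on the $H$-orbits (Dehornoy), should make this clean.
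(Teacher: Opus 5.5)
There is a genuine gap, exactly at the step you flag as the main obstacle: you never prove that a central $z=\sum_a\lambda_a m_a$ is supported on $K$. Steps 1 and 3, and the half of Step 2 that derives $H$-invariance once the support lies in $K$, are fine and match the paper.

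Commuting $z$ with the generators $m_x$ cannot settle the support on its own. As you observe, $m_x$ has $H$-degree $\psi_x$, so $m_xz=zm_x$ only relates $z_\sigma$ to the component of conjugate degree, and nothing forces $z_\sigma=0$. Your fallbacks are not yet arguments:
\begin{enumerate}[{\normalfont (1)},topsep=0pt,itemsep=0pt]
\item the leading-term comparison is not carried out, and the collisions $a+\psi_a(x)=b+\psi_b(x)$ are exactly what it would have to overcome;
\item the conjugation formula in the Ore localisation is left unfinished;
\item the claim that $\{a\mid\psi_a=\id\}$ in the structure group is free abelian on the $H$-orbits is false: in Example~\ref{ex:BuzagloCentre}, $H$ is transitive on $X$, yet this group has rank $3$.
\end{enumerate}
There is also a smaller error. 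The identity $\psi_{h(a)}=h\psi_ah^{-1}$ fails in general. In Example~\ref{ex:Goetz1}, take $h=\psi_{x_1}=(x_1\;x_2\;x_3\;x_4)$ and $a=x_1$: then $h(a)=x_2$ and $\psi_{x_2}=\psi_{x_1}^{-1}\neq\psi_{x_1}=h\psi_{x_1}h^{-1}$. The $H$-stability of $K$ is still true, for a different reason: for $a\in K$ one has $\psi_{\psi_x(a)}\circ\psi_x=\psi_{xa}=\psi_{ax}=\psi_{\psi_a(x)}\circ\psi_a=\psi_x$ (paper's conventions), so $\psi_{\psi_x(a)}=\id_X$.

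The missing idea, which is how the paper proceeds, is to test centrality against elements of $H$-degree $\id$ rather than against the generators.
\begin{enumerate}[{\normalfont (1)},topsep=0pt,itemsep=0pt]
\item By \cite[Lemma 5.5]{dehornoy}, $\psi_{x^c}=\id_X$ for every $x\in X$. So $b=x^c$ (that is, $cx$ in your additive notation) lies in $K$, and $m_b$ has degree $\id$ for the $H$-grading $m_a\mapsto\psi_a$ you already introduced.
\item Hence $m_bz_\sigma$ and $z_\sigma m_b$ both lie in degree $\sigma$, and $m_bz=zm_b$ splits into $m_bz_\sigma=z_\sigma m_b$ for each $\sigma\in H$.
\item In your convention this reads $\sum_{\psi_a=\sigma}\lambda_a m_{a+cx}=\sum_{\psi_a=\sigma}\lambda_a m_{a+c\,\sigma(x)}$. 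Since the $m_a$ form a basis, this says $p_\sigma\,\bigl(x^c-\sigma(x)^c\bigr)=0$ in $\Bbbk[X]$, where $p_\sigma=\sum_{\psi_a=\sigma}\lambda_a a$.
\item As $\Bbbk[X]$ is a domain, choosing $x$ with $\sigma(x)\neq x$ gives $p_\sigma=0$ for every $\sigma\neq\id$.
\end{enumerate}
So $z$ is supported on $K$. No ordering or localisation is needed, and your remaining steps then complete the proof.
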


The above result is practical, in the sense that it can be used to determine generators for the centre and relations between these generators; see Example \ref{ex:BuzagloCentre}.

By Theorem \ref{thm:Gateva}, (twisted) structure algebras are known to be Koszul, which means that the trivial module has a resolution where each map has degree $1$. We show that the Koszul complex for a structure algebra can be described combinatorially.

\begin{theorem}[{Proposition \ref{prop:KoszulComplex}}]
Write $X = \{x_1, \dots, x_n\}$. For $0 \leqslant m \leqslant n$, let $V_m$ be the vector space with basis $\{ v_I \mid I = \{ i_1, \dots, i_m\} \subseteq \{1, \dots, n\} \}$. The Koszul complex for $A = \Bbbk M$ has differentials
\begin{gather*}
\partial_m : V_m \otimes A \to V_{m-1} \otimes A, \qquad \partial_m(v_I \otimes a) = \sum_{j=1}^m (-1)^{m-j} v_{I\setminus\{i_j\}} \otimes \psi_{I\setminus\{i_j\}} (x_{i_j}) \, a,
\end{gather*}
where $\psi_{J}$ is a certain permutation of $X$ which depends on $J \subseteq \{ 1, \dots, n\}$.
\end{theorem}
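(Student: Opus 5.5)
Since $A=\Bbbk M$ is Koszul by Theorem \ref{thm:Gateva}, its Koszul complex is $(A^!_m)^*\otimes A$, where $(A^!_m)^* = \bigcap_{i+j+2=m} V^{\otimes i}\otimes R\otimes V^{\otimes j}$, $V=\Bbbk X$ and $R\subseteq V\otimes V$ is the space of quadratic relations. It is exact, and the differential is the restriction of $v_1\otimes\cdots\otimes v_m\otimes a\mapsto v_1\otimes\cdots\otimes v_{m-1}\otimes v_m a$. The plan is to identify $(A^!_m)^*$ explicitly with $V_m$ and then read off the differential.

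For the identification, use the permutations $\psi$ from \cite{rump2005}. Rump's theory gives a bijection between $\NN^X$ and $M$: every element of $M$ is uniquely $x_{\psi}$-ordered, i.e.\ a product determined by a multiset. The same theory gives well-defined $\psi_J$ for subsets $J$, with $\psi_{J\cup\{j\}}=\psi_J\psi_{x_j}$ in the cycle-set sense. Set
\[
w_I=\sum_{\sigma\in S_m}\sgn(\sigma)\,\psi_{\{i_{\sigma(1)},\dots,i_{\sigma(m-1)}\}}\cdots(x_{i_{\sigma(1)}})\otimes\cdots\otimes x_{i_{\sigma(m)}} \in V^{\otimes m}.
\]
This is the antisymmetrisation along the "$\psi$-twisted" orderings. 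Its words are exactly the $m!$ words representing the squarefree monomial $\prod_{i\in I}x_i$ in $M$, each with a sign. Indeed, for involutive nondegenerate $r$ the fibres of $X^m\to M_m$ over a squarefree element have exactly $m!$ elements, and $r_{k,k+1}$ acts on them as a simply transitive $S_m$-action. The element $w_I$ is the sign-twisted sum over this orbit.

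Key steps, in order:
\begin{enumerate}[(i)]
\item Show that each $w_I$ lies in every $V^{\otimes i}\otimes R\otimes V^{\otimes j}$. Here $R$ is spanned by $xy-r(x,y)$ and by the words $xy$ with $r(x,y)=(x,y)$ being killed in $A^!$ only when $x\neq y$. A sign-alternating sum over $r_{k,k+1}$-orbits of length $2$ lies in $V^{\otimes k-1}\otimes R\otimes V^{\otimes}$, because the pairs cancel into relation vectors. Squarefreeness guarantees that no fixed points $xx$ occur.
\item Show that the $w_I$ span. A dimension count suffices: $\dim A^!_m=\binom nm$, since $A^!$ has Hilbert series $(1+t)^n$ by Koszul duality with $1/(1-t)^n$. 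The $w_I$ have disjoint supports, so they are independent, and hence form a basis. Put $v_I:=w_I$.
\item Compute $\partial_m(w_I\otimes a)$ by collapsing the last tensor factor into $a$. Group the terms of $w_I$ by their last letter. Fix the last letter coming from index $i_j$. The remaining length-$(m-1)$ sum is $\pm w_{I\setminus\{i_j\}}$, and the last letter equals $\psi_{I\setminus\{i_j\}}(x_{i_j})$. The sign $(-1)^{m-j}$ is the sign of the cycle moving $i_j$ to the end.
\end{enumerate}

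The main obstacle is step (iii): checking that after fixing the last letter, the prefixes are exactly the orbit defining $w_{I\setminus\{i_j\}}$ with consistent signs. This needs the cycle-set identity $\psi_{\psi_x(y)}\psi_x=\psi_{\psi_y(x)}\psi_y$ (equivalently, the YBE) to see that $\psi_J$ depends only on the set $J$. I would first prove by induction on $|J|$ that iterated $\psi$'s over any ordering of $J$ agree, and that the $S_m$-action on the fibre is compatible with restriction to prefixes. The sign bookkeeping then reduces to that of the ordinary exterior algebra.
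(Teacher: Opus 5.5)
Your argument is correct, but it runs dual to the paper's.

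The paper works inside the algebra $A^!$. It builds the graded vector-space isomorphism $\eta:\exterior V\to A^!$ (via $\Pi_m$ and the weights $\gamma_\alpha$) and takes the basis $y_I=\eta(x_{i_1}\wedge\cdots\wedge x_{i_m})$ with dual basis $\phi_I$. It then obtains $\partial_m$ by computing the products $x^\vee y_J$ in $A^!$ and dualising.

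You work directly in $(A^!_m)^*=\bigcap_{i+j=m-2}V^{\otimes i}\otimes R\otimes V^{\otimes j}\subseteq V^{\otimes m}$. You write each basis vector as the signed sum over the fibre of $X^m\to M$ above $\nu(\prod_{i\in I}x_i)$, and verify membership by pairing $\sigma$ with $\sigma(k\;k+1)$. This is exactly the argument the paper uses for Proposition \ref{prop:Superpotential}; indeed $w_{\{1,\dots,n\}}$ is $\sfw$ for trivial $\alpha$, and for trivial $\alpha$ your $w_I$ is precisely the paper's $\phi_I$.

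What your route buys is that the differential becomes literally ``strip off the last letter''. You never compute in $A^!$, and you never have to justify which products $x^\vee y_J$ vanish. The paper's route additionally gives a description of $A^!$ itself. Both rest on the same inputs from \cite{dehornoy}. Yours also extends to nontrivial $\alpha$ by weighting each word by $1/\gamma_\alpha$: the identity $\gamma_\alpha(y_1,\dots,y_m)=\gamma_\alpha(y_1,\dots,y_{m-1})\,\alpha(y_1\cdots y_{m-1},y_m)$ then produces the factor $\alpha(\nu(\cdots),x_{I,j})^{-1}$.

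A few slips should be fixed, none fatal.
\begin{enumerate}[(a)]
\item Your displayed formula twists the wrong letter. It should read $w_I=\sum_{\sigma\in S_m}\sgn(\sigma)\,\Pi_m(x_{i_{\sigma(1)}},\dots,x_{i_{\sigma(m)}})$, whose $k$-th letter is $\psi_{\{i_{\sigma(1)},\dots,i_{\sigma(k-1)}\}}(x_{i_{\sigma(k)}})$, so the first letter is untwisted. Step (iii) already uses this version.
\item The composition rule is $\psi_{J\cup\{j\}}=\psi_{\psi_J(x_j)}\circ\psi_J$ (Lemma \ref{lem:PsiToProduct}), not $\psi_J\psi_{x_j}$. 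No induction is needed for well-definedness, since $\psi_a$ is already defined on $\NN^X$ by the $I$-structure.
\item $R$ is spanned by the $xy-r(x,y)$ alone. Words fixed by $r$ are not in $R$; their duals lie in $R^\perp$, whether or not $x=y$. Step (i) only uses the differences, so nothing breaks.
\item You should state and check the identity driving (i), namely $r_{k,k+1}(\Pi_m(s))=\Pi_m(s\cdot(k\;k+1))$. Put $a=\Omega_k(s_1,\dots,s_k)$ and $b=\Omega_k(s_1,\dots,s_{k-1},s_{k+1})$. Then positions $k,k+1$ read $(a,a*b)$ and $(b,b*a)$, and $r(a,a*b)=(b,b*a)$. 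The later positions agree by Lemma \ref{lem:OmegaAndPiProp}(1).
\end{enumerate}
That same lemma, together with the fact that the first $m-1$ coordinates of $\Pi_m$ form $\Pi_{m-1}$, makes (iii) immediate, so your ``main obstacle'' is not really one.
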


We also give an expression for the differential when $A=\Bbbk^\alpha M$ is an arbitrary $2$-cocycle twist of $\Bbbk M$, although the formula is more unwieldy.

By work of Dubois-Violette \cite{duboisviolette}, it is known that the relations in a Koszul algebra can be obtained from a so-called twisted superpotential. For twisted structure algebras, the twisted superpotential has a particularly nice description in terms of factorisations of a Garside element $\Delta$ in $M$.

\begin{theorem}[{Proposition \ref{prop:Superpotential}}]
Let $\Delta$ be the Garside element of minimal length in the structure monoid $M(X,r)$. The twisted superpotential associated to $\Bbbk^\alpha M(X,r)$ is of the form
\begin{align*}
\sfw_\alpha = \sum_{\substack{y_1, \dots, y_n \in X \\ y_1 \dots y_n = \Delta}} \lambda_{y_1, \dots, y_n}^\alpha \; y_1 \otimes \dots \otimes y_n,
\end{align*}
for some $\lambda_{y_1, \dots, y_n}^\alpha \in \Bbbk$.
\end{theorem}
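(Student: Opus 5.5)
The statement is that the twisted superpotential of $\Bbbk^\alpha M(X,r)$ is supported on words $y_1\otimes\dots\otimes y_n$ with $y_1\cdots y_n=\Delta$ in $M$. My plan is to prove this by a grading argument. The superpotential is determined up to scalar, so it is enough to show that some nonzero element of $\bigcap_{i}V^{\otimes i}\otimes R\otimes V^{\otimes(n-2-i)}$ has this shape.

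First I would recall the Dubois-Violette description. Since $A$ is Koszul and AS regular of global dimension $n$ with Hilbert series $1/(1-t)^n$ (Theorem \ref{thm:Gateva}), the top term of the Koszul dual is one-dimensional. That term is
\begin{align*}
W_n=\bigcap_{i=0}^{n-2}V^{\otimes i}\otimes R\otimes V^{\otimes (n-2-i)}\subseteq V^{\otimes n},
\end{align*}
where $V=\Bbbk X$ and $R\subseteq V^{\otimes 2}$ is the space of quadratic relations. The twisted superpotential $\sfw_\alpha$ spans $W_n$.

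Second, I would set up the $M$-grading on $V^{\otimes n}$. Each relation of $\Bbbk^\alpha M$ has the form $xy-\beta\,uv$ with $r(x,y)=(u,v)$, so $xy=uv$ in $M$, and $\beta$ is a cocycle ratio. Hence $R$ is spanned by $M$-homogeneous elements, where the monoid word $y_1\otimes y_2$ is given degree $y_1y_2\in M$. Consequently each space $V^{\otimes i}\otimes R\otimes V^{\otimes(n-2-i)}$ is $M$-graded, and so is their intersection $W_n$. Since $W_n$ is one-dimensional, it is concentrated in a single degree $m\in M$ of length $n$. So $\sfw_\alpha=\sum_{y_1\cdots y_n=m}\lambda_{y}\,y_1\otimes\cdots\otimes y_n$.

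It remains to show $m=\Delta$, and this is the main obstacle. I would use the fact, presumably recorded earlier from Dehornoy's Garside theory, that $\Delta$ is the least common multiple of $X$ and has length $n$. It is enough to show that every generator $x\in X$ left-divides $m$, because then $\Delta$ left-divides $m$ and both have length $n$, forcing $m=\Delta$. To get divisibility, I would use that $\sfw_\alpha$ is a twisted superpotential, namely $\sfw_\alpha=(\sigma\otimes 1)\tau(\sfw_\alpha)$ up to the twist, where $\tau$ is the cyclic permutation and $\sigma$ is a graded automorphism. From this, every generator must occur as the first letter of some word in the support. The reason is that the Koszul sign-twisted derivatives $\partial_x\sfw_\alpha$ recover a basis of $V$: by Dubois-Violette the relations are $\{\partial_u\sfw_\alpha\}$ at the appropriate level, and nondegeneracy of the top pairing forces $V\otimes W_{n-1}\supseteq W_n$ to use all first letters.

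So for each $x$ there is a word $x y_2\cdots y_n$ in the support, and hence $x\le m$. An alternative way to reach the same conclusion is to compare with the Koszul complex of Proposition \ref{prop:KoszulComplex}. The top differential there sends $v_{\{1,\dots,n\}}$ to terms $\psi_{I\setminus\{i_j\}}(x_{i_j})$. Iterating the $\psi$-formula, this exhibits the top degree as the right lcm of all $x_i$, which is $\Delta$ by Dehornoy. I would try the divisibility route first, and fall back on the explicit Koszul complex (in the untwisted case, then transferring via the $M$-grading, which twisting does not change) if the first route gets stuck.
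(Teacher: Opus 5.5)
Your argument is correct in outline, but it takes a genuinely different route from the paper.

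\textbf{How the two proofs differ.} The paper is constructive. It writes down
\[
\sfw_\alpha=\sum_{\sigma\in S_n}\frac{\sgn\sigma}{\gamma_\alpha^\sigma}\,y_1^\sigma\otimes\cdots\otimes y_n^\sigma .
\]
By Lemma \ref{lem:PropertiesOfDelta}, the words in this sum are exactly the $n!$ distinct factorisations of $\Delta$, so the element is nonzero. The paper then checks that it lies in each $W_i=V^{\otimes(i-1)}\otimes\mathcal{R}\otimes V^{\otimes(n-1-i)}$, by pairing $\sigma$ with $\sigma(i\;i+1)$ and applying Lemma \ref{lem:GammaProperty}. One-dimensionality of $\bigcap_i W_i$ is then used only to conclude that this element is the superpotential.

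You use one-dimensionality first. Since $\mathcal{R}$ is spanned by $M$-homogeneous elements, your $W_n=\bigcap_i W_i$ is a one-dimensional $M$-graded subspace, so it sits in a single degree $m$; it remains to identify $m$. This is cleaner, and it would work for any quadratic algebra whose relations are homogeneous for a suitable monoid grading. However, it only yields the support. The paper's construction also shows that every factorisation of $\Delta$ occurs, with the explicit nonzero coefficient $\sgn\sigma/\gamma_\alpha^\sigma$. Those coefficients are exactly what the paper needs later, to compute the twist in Proposition \ref{prop:Twist} and the Calabi--Yau criterion in Proposition \ref{prop:CY}.

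\textbf{Two steps to tighten.}

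\emph{Every $x\in X$ is a first letter.} This does not follow from the twisted superpotential identity alone: $y\otimes y\otimes y$ is a superpotential in which no word starts with $x$. The real input is AS regularity.
\begin{itemize}
\item Since $A$ is Koszul and AS regular, $A^!$ is Frobenius.
\item Equivalently, the left derivatives $\partial^1\sfw_\alpha$ fill the whole $n$-dimensional space $(A^!_{n-1})^*$ (your $W_{n-1}$).
\item So $f\mapsto(f\otimes\id^{\otimes(n-1)})(\sfw_\alpha)$ is injective on $V^*$.
\item Writing $\sfw_\alpha=\sum_{x\in X}x\otimes w_x$, this says $w_x\neq 0$ for every $x$.
\end{itemize}
An equivalent version: $\partial^{n-1}\sfw_\alpha=V$ says the last letters span $V$, and the invertible twist transports this to the first letters. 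Your wording mixes these two versions, and also conflates first-order derivatives with $\partial^{n-1}$.

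\emph{Identifying $m$ with $\Delta$.} You do not need to import the fact that $\Delta$ is the right lcm of $X$, which the paper does not record. Suppose $m=\nu(a)$ and $m=x\,m'$. Lemma \ref{lem:PsiToProduct} gives $x\,m'=\nu\big(x\,\psi_x^{-1}(\nu^{-1}(m'))\big)$, so injectivity of $\nu$ shows that $x$ divides $a$ in $\NN^X$. If this holds for all $x\in X$ and $\ell(a)=n$, then $a=\delta$, and hence $m=\nu(\delta)=\Delta$.
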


Using a result of Mori--Smith, we are able to give necessary and sufficient conditions for when a twisted structure algebra is Calabi--Yau, which are easy to check in examples (see Example \ref{ex:BuzagloCY}). 

\begin{theorem}[{Proposition \ref{prop:CY}}]
The twisted structure algebra $\Bbbk^\alpha M$ is Calabi--Yau if and only if 
\begin{align*}
\Delta \in Z(M) \quad \text{and} \quad \sgn \psi_x = \frac{\alpha(x,\Delta)}{\alpha(\Delta,x)} \quad \text{for all } x \in X.
\end{align*}
\end{theorem}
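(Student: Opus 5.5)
The plan is to invoke the Mori--Smith criterion. A Koszul AS regular algebra $A$ with twisted superpotential $\sfw$ is Calabi--Yau if and only if its Nakayama automorphism is trivial. Equivalently (up to the sign convention $(-1)^{n+1}$ that appears in Mori--Smith), $\sfw$ must be a genuine superpotential, i.e.\ invariant up to the appropriate sign under the cyclic shift
\begin{align*}
\phi(y_1\otimes\cdots\otimes y_n)=y_n\otimes y_1\otimes\cdots\otimes y_{n-1}.
\end{align*}
So I will take the description $\sfw_\alpha=\sum_{y_1\cdots y_n=\Delta}\lambda^\alpha_{y_1,\dots,y_n}\,y_1\otimes\cdots\otimes y_n$ from Proposition \ref{prop:Superpotential} and compute the Nakayama automorphism $\nu$ directly.

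The Nakayama automorphism is read off from the relation $\sfw=(\nu\otimes\id^{\otimes n-1})\phi(\sfw)$, up to sign. The first step is to determine the support. Every word in $\sfw$ has product $\Delta$, so $\phi$ sends it to $y_n y_1\cdots y_{n-1}$. Since $y_1\cdots y_{n-1}y_n=\Delta$, we get
\begin{align*}
y_n\,\Delta=y_n y_1\cdots y_{n-1}\,y_n,
\end{align*}
and by cancellativity of $M$ the product $y_n y_1\cdots y_{n-1}$ is the conjugate $y_n\Delta y_n^{-1}$ in the structure group. On generators, conjugation by $\Delta$ is a permutation of $X$: this is a standard Garside fact, and I expect Dehornoy's work to give it as a permutation built from the $\psi$'s. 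It follows that $\nu$ must be this permutation $x\mapsto \Delta x\Delta^{-1}$, possibly twisted by scalars. Hence a necessary condition for $\nu$ to be a scalar-free identity is that $\Delta$ commute with every generator, i.e.\ $\Delta\in Z(M)$.

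Assume now $\Delta\in Z(M)$. Then $\nu$ is diagonal on $X$, say $\nu(x)=\mu_x x$, and the remaining task is to compute $\mu_x$ by comparing coefficients. This is the main obstacle: I need explicit control of $\lambda^\alpha_{y_1,\dots,y_n}$. My first attempt is to obtain the untwisted coefficients from the Koszul complex of Proposition \ref{prop:KoszulComplex}. The top differential there is indexed by the full set $I=\{1,\dots,n\}$, which suggests $\lambda$ is a signed sum over orderings, with signs governed by the permutations $\psi_J$. Cycling the last letter to the front then produces a sign $(-1)^{n-1}$ together with the sign of the permutation by which $\psi$ reorders $X$, and this gives $\sgn\psi_x$. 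Indeed, passing $x$ through the remaining $n-1$ letters acts on the other generators via $\psi_x$, so the comparison of orientations yields $\sgn\psi_x$; the $(-1)^{n-1}$ cancels against the Mori--Smith sign.

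For the twist, I would use $\lambda^\alpha=\lambda\cdot(\text{product of }\alpha\text{ along the factorisation})$. The ratio of these products between the factorisations $y_1\cdots y_n$ and $y_n y_1\cdots y_{n-1}$ collapses, by the cocycle identity, to $\alpha(\Delta x^{-1},x)/\alpha(x,\Delta x^{-1})$. Using the cocycle identity once more together with centrality of $\Delta$, this rewrites as $\alpha(\Delta,x)/\alpha(x,\Delta)$. Thus $\mu_x=\sgn\psi_x\cdot\alpha(\Delta,x)/\alpha(x,\Delta)$, and $\nu=\id$ exactly when the stated identity holds.
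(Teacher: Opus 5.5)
Your route is the same as the paper's. You compare each factorisation of $\Delta$ in $\sfw_\alpha$ with its cyclic rotation. Cancellativity and $x\Delta=\Delta\psi_\delta(x)$ (Lemma \ref{lem:PropertiesOfDelta}(3)) force the twist to be $\psi_\delta^{\pm1}$ up to scalars, so $\Delta\in Z(M)$ is necessary. The scalar is then a sign coming from $\psi_x$ times an $\alpha$-ratio. Your telescoping to $\alpha(z,x)/\alpha(x,z)$ with $zx=\Delta$ is correct, as is its rewriting as $\alpha(\Delta,x)/\alpha(x,\Delta)$ under centrality; this is the last part of the proof of Proposition \ref{prop:Twist}. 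Your $\phi$ moves the last letter to the front, so your twist is the inverse of the paper's $\theta$. This does not affect the Calabi--Yau criterion.

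\textbf{The gap.} It is the step you yourself call the main obstacle: the coefficients $\lambda^\alpha$ are never determined, and ``the comparison of orientations yields $\sgn\psi_x$'' is asserted, not proved. Your support argument also tacitly needs the relevant coefficients to be nonzero. The paper supplies two precise inputs.

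\emph{First input (Proposition \ref{prop:Superpotential}).} Index factorisations by $\sigma\in S_n$ via $\Pi_n(x_{\sigma(1)},\dots,x_{\sigma(n)})=(y_1^\sigma,\dots,y_n^\sigma)$. Then the coefficient of $y_1^\sigma\otimes\cdots\otimes y_n^\sigma$ is $\sgn\sigma/\gamma_\alpha^\sigma$. This is proved by showing $\sfw_\alpha\in W_i$ for each $i$, together with $\dim\bigcap_i W_i=1$. For $W_i$: the factorisations for $\sigma$ and $\sigma(i\;i+1)$ differ by one defining relation in positions $i,i+1$, and Lemma \ref{lem:GammaProperty} gives the correct ratio.
\begin{itemize}
\item The $\alpha$-dependence is division by $\gamma_\alpha^\sigma$, not multiplication as you wrote. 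Your final $\mu_x$ actually corresponds to division, and inverting the $\alpha$-ratio would not change the condition anyway, since $\sgn\psi_x=\pm1$.
\item Your Koszul-dual idea can replace this step. Since $\sfw$ spans $(A^!_n)^*\subseteq V^{\otimes n}$, and $\gamma_\alpha^\sigma\,(y_n^\sigma)^\vee\cdots(y_1^\sigma)^\vee=\eta(x_{\sigma(1)}\wedge\cdots\wedge x_{\sigma(n)})=\sgn\sigma\,y_X$ in $A^!$, evaluating $\phi_X$ gives the same coefficients.
\end{itemize}

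\emph{Second input (Proposition \ref{prop:Twist}).} Let $x=x_{\sigma(1)}$ and let $\pi_x$ be defined by $\psi_x(x_i)=x_{\pi_x(i)}$. Using $\nu(xb)=x\,\nu(\psi_x(b))$ (Lemma \ref{lem:PsiToProduct}) and left cancellation, the rotated factorisation is indexed by $\tau=\pi_x\circ\sigma\circ(1\;2\;\cdots\;n)$. Hence $\sgn\tau=(-1)^{n-1}\sgn\psi_x\sgn\sigma$. This is the rigorous form of your ``passing $x$ through the remaining letters acts via $\psi_x$''.

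With these two facts your outline becomes the paper's proof; without them it remains a heuristic.
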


We end the paper by analysing a number of examples in the recent literature on dual reflection groups and demonstrate that they fall under the purview of the results in this paper.

\mbox{}

\noindent\textbf{Acknowledgements.} The author is a Heilbronn fellow at the University of Manchester. He is grateful to Lucas Buzaglo, Ellen Kirkman, and Daniel Rogalski for helpful discussions.

\newpage

\section{Preliminaries} \label{sec:preliminaries}

Throughout, let $\Bbbk$ be an algebraically closed field of characteristic $0$.

\subsection{Artin--Schelter regular algebras and Koszul algebras}
Suppose that $A$ is a $\Bbbk$-algebra. We say that $A$ is \emph{connected graded} if it is $\NN$-graded with $A_0 = \Bbbk$. A graded $A$-module $M$ is said to be \emph{locally finite} if $\dim_\Bbbk M_i < \infty$ for all $i \in \mathbb{Z}$. \\
\indent If $A$ is a connected graded $\Bbbk$-algebra which is locally finite, then we may define the \emph{Gelfand-Kirillov (GK) dimension} of a finitely generated graded $A$-module $M$ to be
\begin{align*}
\GKdim M \coloneqq \limsup_{n \to \infty} \log_n(\dim_\Bbbk M_n) + 1.
\end{align*}
Note that the finite generation hypothesis ensures that $M_n = 0$ for $n \ll 0$. The GK dimension can be defined in wider generality but, under our assumptions, the above definition is equivalent to the usual one by \cite[Proposition 6.6]{lenagan}. Under these hypotheses, we may also define the \emph{Hilbert series} of $M$ as follows:
\begin{align*}
\hilb_M(t) = \sum_{n \geqslant 0} (\dim_\Bbbk M_n)\,t^n.
\end{align*}

We will be interested in a particular family of connected graded algebras, which are often thought of as noncommutative analogues of polynomial rings:

\begin{defn}
Let $A$ be a connected graded $\Bbbk$-algebra, and also write $\Bbbk = A/A_{\geqslant 1}$ for the trivial module. We say that $A$ is \emph{Artin--Schelter Gorenstein} (or AS Gorenstein) \emph{of dimension $d$} if:
\begin{enumerate}[{\normalfont (1)},topsep=1pt,itemsep=1pt,leftmargin=35pt]
\item $A$ has injective dimension $d$ as a left or right module over $A$; and
\item As graded right $A$-modules,
\begin{align*}
\Ext^i(_{A} \Bbbk,{}_A A) \cong \left \{
\begin{array}{cl}
0 & \text{if } i \neq d, \\
\Bbbk[\ell]_A & \text{if } i = d,
\end{array}
\right . 
\end{align*}
for some integer $\ell$, and a symmetric condition holds for $\Ext^i(\Bbbk_{A},A_A)$, with the same integer $\ell$. We call $\ell$ the \emph{Gorenstein parameter} of $A$.
\end{enumerate}
If, moreover
\begin{enumerate}[{\normalfont (1)},topsep=1pt,itemsep=1pt,leftmargin=35pt]
\item[(3)] $\gldim A = d$, and
\item[(4)] $A$ has finite GK dimension,
\end{enumerate}
then we say that $A$ is \emph{Artin--Schelter regular} (or AS regular) \emph{of dimension $d$}.
\end{defn}

AS regular algebras are often viewed as noncommutative analogues of polynomial rings and have good ring-theoretic and homological properties. For example, all known examples are noetherian domains, and it is conjectured that this is always the case. In particular, the only examples of commutative AS regular algebras are polynomial rings, \cite[Exercise 2.2.25]{brunsherzog}. We note that some authors omit the finite GK dimension hypothesis, in which case $A$ need not be noetherian, \cite[Theorem 0.2]{zhangnoetherian}.

A $\Bbbk$-algebra $A$ is said to be \emph{quadratic} if it has a presentation of the form
\begin{align*}
A = \frac{T_\Bbbk(V)}{\langle \mathcal{R} \rangle}, 
\end{align*}
where $V = A_1$ is a finite-dimensional vector space and $\mathcal{R} \subseteq V^{\otimes 2}$ is a subspace of relations. Since $V$ is finite-dimensional, we have an identification
\begin{align*}
\varphi : (V^*)^{\otimes k} \to (V^{\otimes k})^*, \quad \varphi(f_1 \otimes \dots \otimes f_k)(v_1 \otimes \dots \otimes v_k) = f_k(v_1) f_{k-1}(v_2) \dots f_1 (v_k). 
\end{align*}
We will usually suppress the map $\varphi$. Note that there are competing conventions (see, for example, \cite[Footnote 3]{morismith}) when identifying $(V^*)^{\otimes k}$ with $(V^{\otimes k})^*$. With this identification, we set 
\begin{align*}
\mathcal{R}^\perp = \{ f \in (V^*)^{\otimes 2} \mid f(\mathcal{R}) = 0 \},
\end{align*}
and then define the \emph{Koszul dual} of $A$ to be
\begin{align*}
A^! \coloneqq \frac{T_\Bbbk(V^*)}{\langle \mathcal{R}^\perp \rangle}.
\end{align*}
This algebra is also connected graded. As the name implies, $A^{!!} \cong A$.

A \emph{linear resolution} of a graded $A$-module is a free resolution of the form
\begin{align*}
\dots \to F_{i+1} \to F_{i} \to F_{i-1} \to \dots \to F_{1} \to F_{0} \to M \to 0,
\end{align*}
where each $F_i$ is a free $A$-module generated in degree $i$.

\begin{defn}
A \emph{Koszul algebra} is a quadratic algebra $A$ for which $\Bbbk \coloneqq A/A_{\geqslant 1}$ has a linear resolution.
\end{defn}

Now fix a basis $\{ v_1, \dots, v_n \}$ of $V=A_1$ and let $\{ f_1, \dots, f_n\}$ be the corresponding dual basis of $V^* = A_1^!$. Then the element $e = \sum_{j=1}^n f_j \otimes v_j$ is independent of our choice of basis. The vector space $F_i \coloneqq (A_i^!)^* \otimes_\Bbbk A$ is a free right $A$-module and there are $A$-module homomorphisms
\begin{align*}
\partial_{i} : F_{i} \to F_{i-1}, \quad \partial_{i}(\phi \otimes a) = \sum_{j=1}^n \phi f_j \otimes v_j a, \quad \text{where} \quad \phi f_j : A^!_{i-1} \to \Bbbk, \quad (\phi f_j)(f') = \phi(f_j f').
\end{align*}
The \emph{Koszul complex} is then given by
\begin{align*}
K_\bullet = \quad \dots \xrightarrow{\mathmakebox[22pt]{\partial_{i+1}}} F_{i} \xrightarrow{\mathmakebox[22pt]{\partial_{i}}} F_{i-1} \xrightarrow{\mathmakebox[22pt]{\partial_{i-1}}} \dots \xrightarrow{\mathmakebox[22pt]{\partial_{2}}} F_1 \xrightarrow{\mathmakebox[22pt]{\partial_{1}}} F_{0} \xrightarrow{\mathmakebox[22pt]{\varepsilon}} \Bbbk \xrightarrow{\makebox[22pt]{\phantom{a}}} 0
\end{align*}
which, as the name suggests, is a complex of $A$-modules. 

\begin{proposition}[{\cite[Theorem 5.9]{smithfindim}}]
A quadratic algebra $A$ is Koszul if and only if its Koszul complex is exact. This happens if and only if $A^!$ is Koszul. In this case, the Hilbert series of $A$ and $A^!$ satisfy
\begin{align*}
\hilb_{A^!}(t) = \frac{1}{\hilb_A(-t)}. 
\end{align*}
\end{proposition}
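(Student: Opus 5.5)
The plan is to treat the four assertions in turn: the Koszul complex is a complex, exactness is equivalent to Koszulity, Koszulity passes to $A^!$, and the Hilbert series identity follows.

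\textbf{$K_\bullet$ is a complex.} First I check that $\partial_{i-1}\partial_i = 0$. Iterating the formula gives
\begin{align*}
\partial_{i-1}\partial_i(\phi\otimes a) = \sum_{j,k} \phi f_j f_k \otimes v_k v_j a .
\end{align*}
This is the contraction of $\phi$ against the image of $\sum_{j,k} f_j\otimes f_k \otimes v_k\otimes v_j$ in $A^!_2\otimes A_2 = (V^*)^{\otimes 2}/\mathcal{R}^\perp \otimes V^{\otimes 2}/\mathcal{R}$. Under the identification $\varphi$, that tensor is the identity element of $(V^{\otimes 2})^*\otimes V^{\otimes 2}$. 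Since $\mathcal{R}^\perp$ annihilates $\mathcal{R}$, the identity can be written as $\sum_\alpha g_\alpha\otimes r_\alpha + \sum_\beta h_\beta\otimes s_\beta$ with $r_\alpha \in \mathcal{R}$ and $h_\beta\in\mathcal{R}^\perp$. Each summand dies in $A^!_2\otimes A_2$, so the composite vanishes. The augmentation step $\varepsilon\partial_1 = 0$ is immediate, since $\partial_1$ lands in $A_{\geqslant1}$.

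\textbf{Exactness is equivalent to Koszulity.} If $K_\bullet$ is exact, then since $F_i=(A^!_i)^*\otimes A$ is generated in degree $i$, it is a linear resolution of $\Bbbk$, so $A$ is Koszul. For the converse, assume $\Bbbk$ has a linear resolution. Then the minimal resolution is linear, so $\Tor_i^A(\Bbbk,\Bbbk)$ is concentrated in internal degree $i$.
\begin{itemize}
\item Via the bar complex, the diagonal part $\Tor_{i}^A(\Bbbk,\Bbbk)_i$ is identified with $\bigcap_{j} V^{\otimes j}\otimes\mathcal{R}\otimes V^{\otimes i-j-2}$. This is exactly $(A^!_i)^*$, since $A^!_i$ is the quotient of $(V^*)^{\otimes i}$ by the sum of the $(V^*)^{\otimes j}\otimes\mathcal{R}^\perp\otimes (V^*)^{\otimes i-j-2}$.
\item I then show by induction on $i$ that the minimal linear resolution coincides with $K_\bullet$. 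Suppose $K_\bullet$ is exact up to $F_{i-1}$. The minimal generators of $\ker\partial_{i-1}$ lie in degree $i$ and are given by the degree-$i$ part of that kernel. A direct computation identifies this part with $\bigl((A^!_{i-1})^*\otimes V\bigr)\cap\bigl((A^!_{i-2})^*\otimes\mathcal{R}\bigr)$ inside $V^{\otimes i}$, which is $(A^!_i)^*$. So $\partial_i$ maps onto the kernel, and exactness propagates.
\end{itemize}

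\textbf{Koszulity passes to $A^!$.} This is the step I expect to be the main obstacle. I would use the standard lattice-theoretic criterion of Backelin: $A$ is Koszul if and only if, for each $i$, the subspaces $X_j = V^{\otimes j}\otimes\mathcal{R}\otimes V^{\otimes i-j-2}$ of $V^{\otimes i}$ generate a distributive lattice. Taking orthogonal complements is an anti-isomorphism of subspace lattices, and it sends $X_j$ to the corresponding subspaces built from $\mathcal{R}^\perp$. Hence distributivity for $A$ is equivalent to distributivity for $A^!$. Since $A^{!!}\cong A$, the equivalence is symmetric. An alternative route is to show that $\Ext_A^\bullet(\Bbbk,\Bbbk)\cong A^!$ when $A$ is Koszul, and then dualise the Koszul complex.

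\textbf{Hilbert series.} When $K_\bullet$ is exact, take Euler characteristics in each internal degree $m$. This gives
\begin{align*}
\sum_{i}(-1)^i\dim A^!_i\,\dim A_{m-i}=\delta_{m,0},
\end{align*}
which says exactly that $\hilb_{A^!}(-t)\,\hilb_A(t)=1$. Replacing $t$ by $-t$ yields $\hilb_{A^!}(t)=1/\hilb_A(-t)$.
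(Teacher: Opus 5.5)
The paper gives no proof of this statement. It is quoted from \cite{smithfindim} as background, so you are supplying an argument where the paper only has a citation, and what you wrote is correct.

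\textbf{Steps that check out.} The canonical-element argument shows that $K_\bullet$ is a complex. The direction ``exact $\Rightarrow$ Koszul'' is fine. So is the Euler characteristic computation giving $\hilb_{A^!}(-t)\hilb_A(t)=1$. The inductive step in the converse also holds. Exactness at $F_0,\dots,F_{i-2}$ makes the truncation a minimal partial resolution. Then $\ker\partial_{i-1}/(\ker\partial_{i-1})A_{\geqslant 1}\cong\operatorname{Tor}^A_i(\Bbbk,\Bbbk)$, which is concentrated in degree $i$. Hence $\ker\partial_{i-1}$ is generated by its degree-$i$ part, which is $\bigl((A^!_{i-1})^*\otimes V\bigr)\cap\bigl((A^!_{i-2})^*\otimes\mathcal{R}\bigr)=(A^!_i)^*$, the image of $\partial_i$ in that degree. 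Your bar-complex bullet is true but not needed for this induction.

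\textbf{The passage to $A^!$.} You work harder than necessary here. Backelin's distributivity criterion is valid, but it is a deeper theorem than the one being proved. There is a cheaper route using only your step 2.

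Fix an internal degree $m$, augmentations included. The Koszul complex of $A^!$ in that degree is $\bigoplus_p (A_p)^*\otimes A^!_{m-p}$, with differential $\chi\otimes g\mapsto\sum_j \chi(v_j\,-)\otimes f_j g$. This uses $A^{!!}=A$, which holds with the paper's convention for $\varphi$.

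This complex is exactly the $\Bbbk$-linear dual of the degree-$m$ strand $\bigoplus_i (A^!_i)^*\otimes A_{m-i}$ of $K_\bullet(A)$. To see this, dualise $\phi\otimes a\mapsto\sum_j\phi(f_j\,-)\otimes v_ja$ and swap the tensor factors.

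A finite complex of finite-dimensional spaces is exact if and only if its dual is. Applying your step 2 to both $A$ and $A^!$ then gives $A$ Koszul $\Leftrightarrow$ $K_\bullet(A)$ exact $\Leftrightarrow$ $K_\bullet(A^!)$ exact $\Leftrightarrow$ $A^!$ Koszul. The Ext isomorphism in your alternative route is therefore also unnecessary.

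So the step you flagged as the main obstacle is actually the easiest one. The real content is the direction ``linear resolution $\Rightarrow$ $K_\bullet$ exact'', and you handled that correctly.
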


Koszul algebras have been well studied and many familiar algebras are Koszul. For example, if $A$ is AS regular of global dimension $n$ with Hilbert series $\hilb_A(t) = 1/(1-t)^n$, then it is Koszul by \cite[Proposition 3.1 (4)-(5)]{stephenson}. In particular, polynomial rings are Koszul, and hence so too are their duals, namely exterior algebras.

Closely related to the types of algebra already discussed are \emph{superpotential algebras}. Let $V$ be a finite-dimensional $\Bbbk$-vector space. For any positive integer $m$, there is a linear map 
\begin{align*}
\phi: V^{\otimes m} \to V^{\otimes m}, \qquad \phi( v_1 \otimes v_2 \otimes \dots \otimes v_{m-1} \otimes v_{m}) = v_2 \otimes v_3 \otimes \dots \otimes v_{m} \otimes v_{1}.
\end{align*}

\begin{defn} 
Let $\sfw \in V^{\otimes m}$ and let $\theta \in \GL(V)$. We say that $\sfw$ is a $\theta$\emph{-twisted superpotential} (or simply a \emph{twisted superpotential}) if $({\id^{\otimes (m-1)}} \otimes \sigma) \phi(\sfw) = \sfw$. When $\theta = \id_V$, we call $\sfw$ a \emph{superpotential}.
\end{defn}

We caution the reader that other authors have slightly different conventions for twisted superpotentials; cf.~\cite{bsw,morismith}.

\begin{defn}
Let $\sfw \in V^{\otimes m}$. We define
\begin{align*}
\partial^i \sfw \coloneqq \big\{ \psi_1 \otimes \psi_2 \otimes \dots \psi_i \otimes \id^{m-i}(\sfw) \hspace{3pt} \big| \hspace{3pt} \psi_1, \dots, \psi_i \in V^* \big \}.
\end{align*}
We then define the \emph{superpotential algebra of $\sfw$ of order $i$} to be
\begin{align*}
\mathscr{D}(\sfw,i) \coloneqq \frac{T_{\Bbbk}(V)}{\langle \partial^i \sfw \rangle}.
\end{align*}
\end{defn}

In practice, the relations in $\mathscr{D}(\sfw,i)$ are obtained from $\sfw$ by ``formal left differentiation'' with respect to all length $i$ expressions in the $v_j$. 

The following result establishes the connection between Koszul AS regular algebras and superpotential algebras:

\begin{theorem}[{\cite[Theorem 11]{duboisviolette}}]
Suppose that $A = T(V)/\langle \mathcal{R} \rangle$ is a Koszul AS regular algebra of global dimension $n$. Then there exists a twisted superpotential $\sfw \in V^{\otimes n}$ such that
\begin{align*}
A \cong \mathscr{D}(\sfw, n-2).
\end{align*}
\end{theorem}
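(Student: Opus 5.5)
The plan is to construct $\sfw$ directly from the Koszul dual $A^!$, using that $A^!$ is a graded Frobenius algebra. Since $A$ is Koszul, the Yoneda algebra $\Ext_A^\bullet(\Bbbk,\Bbbk)$ is isomorphic to $A^!$. The linear resolution of $\Bbbk$ is the Koszul complex $K_\bullet$, with terms $F_i = (A^!_i)^*\otimes A$. Because $\gldim A = n$, we get $A^!_i = 0$ for $i>n$.

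\textbf{Step 1: $A^!$ is Frobenius.} Apply $\operatorname{Hom}_A(-,A)$ to $K_\bullet$. This gives a complex with terms $A\otimes A^!_i$ whose cohomology computes $\Ext^i_A(\Bbbk,A)$. The AS Gorenstein condition says this cohomology is $\Bbbk$ concentrated in degree $n$. So the dual complex is a linear resolution of the right trivial module, shifted by $n$. Comparing with the Koszul resolution of $\Bbbk_A$ gives two conclusions:
\begin{itemize}
\item $\dim A^!_n = 1$;
\item $\dim A^!_i = \dim A^!_{n-i}$, and the multiplication pairings $A^!_i\times A^!_{n-i}\to A^!_n$ are nondegenerate.
\end{itemize}
This is the standard result of Smith that a Koszul algebra is AS Gorenstein if and only if its dual is Frobenius. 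I expect this step to be the main obstacle. In particular, extracting nondegeneracy of the multiplication pairing, and not merely equality of dimensions, from exactness of the dual complex requires care. I would argue it via the differential $\partial$ being given by left multiplication by $e=\sum_j f_j\otimes v_j$.

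\textbf{Step 2: definition of $\sfw$ and the relations.} We have
\begin{align*}
(A^!_n)^* = \bigcap_{i=0}^{n-2} V^{\otimes i}\otimes \mathcal{R}\otimes V^{\otimes (n-2-i)} \subseteq V^{\otimes n},
\end{align*}
and by Step 1 this space is one-dimensional. Let $\sfw$ be a spanning vector. Since $\sfw\in V^{\otimes(n-2)}\otimes\mathcal{R}$, contracting the first $n-2$ tensor factors gives $\partial^{n-2}\sfw\subseteq\mathcal{R}$.

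For the reverse inclusion I compute annihilators. Suppose $f\in (V^*)^{\otimes 2}$ kills $\partial^{n-2}\sfw$. Then $(\psi\otimes f)(\sfw)=0$ for all $\psi\in (V^*)^{\otimes(n-2)}$. In $A^!$ this says $\bar\psi\,\bar f=0$ in $A^!_n$ for every $\bar\psi\in A^!_{n-2}$. Nondegeneracy of the pairing $A^!_{n-2}\times A^!_2\to A^!_n$ then forces $\bar f=0$, that is, $f\in\mathcal{R}^\perp$. Hence $(\partial^{n-2}\sfw)^\perp\subseteq\mathcal{R}^\perp$, so $\mathcal{R}\subseteq\partial^{n-2}\sfw$. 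Therefore $A\cong\mathscr{D}(\sfw,n-2)$. Throughout I would keep track of the reversed-order convention in $\varphi$.

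\textbf{Step 3: $\sfw$ is a twisted superpotential.} Nondegeneracy of the two pairings $A^!_1\times A^!_{n-1}\to A^!_n$ and $A^!_{n-1}\times A^!_1\to A^!_n$ gives a Nakayama automorphism $\mu$ of $V^*=A^!_1$. It is characterised by $ab=\mu(b)a$ for $a\in A^!_{n-1}$ and $b\in A^!_1$. Dualising $\mu$ yields $\theta\in\GL(V)$. Evaluating $\sfw$ on $a\otimes b$ in both orders translates the identity $ab=\mu(b)a$ into the statement that the cyclically shifted tensor $\phi(\sfw)$, with $\theta$ applied to the moved factor, equals $\sfw$. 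This is exactly the $\theta$-twisted superpotential condition, possibly up to the scalar normalisation absorbed into $\theta$.
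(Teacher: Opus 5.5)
The paper does not prove this statement. It is quoted from \cite[Theorem 11]{duboisviolette} and used as a black box, so there is no internal argument to compare against. Your proof is correct, and it is essentially the standard argument behind the citation:
\begin{itemize}
\item Smith's criterion that $A^!$ is graded Frobenius;
\item $\sfw$ taken as a spanning vector of $(A^!_n)^*=\bigcap_i V^{\otimes i}\otimes\mathcal{R}\otimes V^{\otimes(n-2-i)}$;
\item the relations recovered from nondegeneracy of the pairing between $A^!_2$ and $A^!_{n-2}$;
\item the twist obtained by dualising the Nakayama automorphism of $A^!$.
\end{itemize}
Your route has one advantage over the bare citation. It produces exactly the description of $\sfw$ that the paper later imports separately from \cite{morismith} in the proof of Proposition~\ref{prop:Superpotential}, namely that $\sfw$ spans the one-dimensional space $\bigcap_i W_i$.

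A few small remarks.

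\textbf{Sides.} The complex $\operatorname{Hom}_A(K_\bullet,A)$ consists of left modules. So it resolves a shift of ${}_A\Bbbk$ rather than $\Bbbk_A$, and should be compared with the left Koszul resolution.

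\textbf{Nondegeneracy.} The step you flagged does follow from your sketch.
\begin{itemize}
\item The dual differential is $a\otimes f\mapsto\sum_j av_j\otimes f_jf$.
\item Nothing maps into the component $A_0\otimes A^!_{n-i}$. So vanishing of cohomology in degree $n-i<n$ there says that no nonzero $f\in A^!_{n-i}$ satisfies $A^!_1f=0$.
\item Induction on $i$ then gives $A^!_if\neq 0$ for every nonzero $f\in A^!_{n-i}$.
\item The equality $\dim A^!_i=\dim A^!_{n-i}$, from uniqueness of minimal resolutions, upgrades this to nondegeneracy of all the multiplication pairings $A^!_i\times A^!_{n-i}\to A^!_n$.
\end{itemize}

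\textbf{Step 3.} No scalar adjustment is needed. Define $\theta\in\GL(V)$ by $\mu(b)\circ\theta=b$ for $b\in V^*$, i.e.\ $\theta^*=\mu^{-1}$. Then $(\id^{\otimes(n-1)}\otimes\theta)\phi(\sfw)=\sfw$ holds exactly.
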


\subsection{Group coactions}
Let $G$ be a finite group. We say that an algebra $A$ is \emph{graded by the group $G$}, or \emph{$G$-graded}, if there is a decomposition into abelian groups
\begin{align*}
A = \bigoplus_{g \in G} A_g
\end{align*}
which satisfies $A_g A_h \subseteq A_{gh}$. Necessarily $1_A \in A_{1_G}$. An element in $A_g$ for some $g \in G$ is called $G$-homogeneous. The component $A_{1_G}$, corresponding to the identity in $G$, is necessarily a subring of $A$, which we call the \emph{subring of dual invariants}. 

It is well known that a group grading on a ring $A$ is equivalent to an action of the Hopf algebra $H = (\Bbbk G)^*$ on $A$ or, equivalently, a coaction of $\Bbbk G$. Moreover, the subring of dual invariants $A_{1_G}$ is equal to the ring of invariants $A^H$, and it is possible to use various tools from noncommutative ring theory to study these rings. We will use the terms group grading and group coaction interchangeably.

Typically, one is interested in the invariant theory of finite groups when the action is faithful. The analogue of this notion for an action by a Hopf algebra is \emph{inner-faithfulness}. In the context of group gradings, the corresponding action is inner-faithful when the set
\begin{align*}
\{ \deg_G a \mid a \in A_1 \text{ is } G\text{-homogeneous}\}
\end{align*}
generates $G$, \cite[Lemma 2.5]{coactions}. All group gradings considered in this paper will be inner-faithful by construction.

As mentioned in the introduction, we are particularly interested in inner-faithful group gradings of an algebra $A$ for which the subring of dual invariants has finite global dimension. These have been studied in \cite{kirkmannakayama,goetz} and, in light of the Chevalley--Shephard--Todd Theorem, should be thought of as quantum analogues of subgroups of $\GL(n,\Bbbk)$ which are generated by reflections.

\subsection{The Yang--Baxter Equation, Cycle Sets, and Structure Monoids}
The \emph{Yang--Baxter equation} is an equation which originally arose in the study of statistical mechanics, where its formulation is slightly different from the version we introduce. Our particular interest will be in \emph{set-theoretic solutions} to the Yang--Baxter equation, whose study has been motivated by quantum groups and related structures.

\begin{defn} \label{defn:Solution}
Let $X$ be a set and $r : X^2 \to X^2$. We say that $r$ is a \emph{(set-theoretic) solution to the Yang--Baxter equation} if 
\begin{align*}
(r \times \id_X) ({\id_X} \times {r}) (r \times \id_X) = ({\id_X} \times r) (r \times \id_X) ({\id_X} \times r)
\end{align*}
as maps $X^3 \to X^3$. We say that the solution $r$ is \emph{involutive} if $r^2 = \id_{X^2}$.

Finally, suppose we write a solution $r$ as
\begin{align*}
r(x,y) = (\lambda_x(y), \rho_y(x))
\end{align*}
where $\lambda_x, \rho_y : X \to X$. We say that $r$ is \emph{nondegenerate} if $\lambda_x$ and $\rho_y$ are bijective for all $x,y \in X$.
\end{defn}

When $X$ is finite and $r$ is involutive, $\lambda_x$ is a bijection for all $x \in X$ if and only if $\rho_y$ is a bijection for all $y \in X$, by \cite[Theorem 2]{rump2005}, and so nondegeneracy is equivalent to bijectivity of $\lambda_x$ for all $x \in X$. We will always assume $X$ to be finite.

Solutions to the Yang--Baxter equation are closely related to cycle sets, whose definition we give below.

\begin{defn}
A \emph{left cycle set} is a set $X$ equipped with a binary operation $* : X \times X \to X$ such that:
\begin{enumerate}[{\normalfont (1)},topsep=1pt,itemsep=1pt,leftmargin=35pt]
\item The map
\begin{align*}
L_x : X \to X, \quad L_x(y) = x * y
\end{align*}
is bijective for all $x \in X$; and
\item The following identity holds for all $x,y,z \in X$:
\begin{align}
(x * y) * (x * z) = (y*x)*(y*z). \label{eqn:CycleSetIdentity}
\end{align}
Equivalently, for all $x,y \in X$ we have
\begin{align}
L_{x*y} \circ L_x = L_{y*x} \circ L_y. \label{eqn:CycleSetIdentity2}
\end{align}
\end{enumerate}
There is an analogous definition of a \emph{right cycle set}. We will work only with left cycle sets, which we shall simply call \emph{cycle sets}.
\end{defn}

Rump showed that nondegenerate, involutive solutions to the Yang--Baxter equation are in bijective correspondence with cycle sets.

\begin{proposition}[{\cite[Proposition 1]{rump2005}}]
There is a bijective correspondence between nondegenerate, involutive solutions to the Yang--Baxter equation and left (or right) cycle sets. More precisely, if $r(x,y) = (\lambda_x(y), \rho_y(x))$ is such a solution, then $X$ is a cycle set with $x * y = \lambda_x^{-1}(y)$. Conversely, if $(X,*)$ is a cycle set, then the map
\begin{align*}
r : X^2 \to X^2, \quad r(x,y) = \big( L_x^{-1}(y), L_{L_x^{-1}(y)}(x) \big) 
\end{align*}
is a nondegenerate, involutive solution to the Yang--Baxter equation.
\end{proposition}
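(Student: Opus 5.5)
The plan is to show that both structures are encoded by the single family of permutations $\lambda_x$, with $L_x = \lambda_x^{-1}$. Under this dictionary, involutivity says that $\rho$ is determined by $\lambda$, and the Yang--Baxter equation reduces to the cycle set identity \eqref{eqn:CycleSetIdentity2}.

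\emph{Step 1: $r$ is determined by $\lambda$.} Let $r(x,y) = (\lambda_x(y), \rho_y(x))$ be nondegenerate and involutive. Applying $r^2 = \id$ and reading off the first coordinate gives $\lambda_{\lambda_x(y)}(\rho_y(x)) = x$. Hence
\begin{align*}
\rho_y(x) = \lambda_{\lambda_x(y)}^{-1}(x).
\end{align*}
Put $L_x = \lambda_x^{-1}$, which is bijective by nondegeneracy, and substitute $y = L_x^{-1}(y')$. This yields
\begin{align*}
r(x,y) = \big(L_x^{-1}(y), L_{L_x^{-1}(y)}(x)\big),
\end{align*}
which is exactly the formula in the statement. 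So the two assignments are mutually inverse on the level of data. It remains to check that each one lands in the right class of objects.

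\emph{Step 2: Yang--Baxter versus the cycle identity.} I would compute the first coordinate of both sides of the braid relation on $(x,y,z)$. The left side gives $\lambda_x\lambda_y(z)$ and the right side gives $\lambda_{\lambda_x(y)}\lambda_{\rho_y(x)}(z)$. So the first-coordinate condition is
\begin{align*}
\lambda_x\lambda_y = \lambda_{\lambda_x(y)}\lambda_{\rho_y(x)}.
\end{align*}
Inverting, this becomes $L_yL_x = L_{\rho_y(x)}L_{\lambda_x(y)}$. Now set $a = \lambda_x(y)$. Then $y = x*a$, and by Step 1 $\rho_y(x) = L_a(x) = a*x$. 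The condition therefore reads
\begin{align*}
L_{x*a}L_x = L_{a*x}L_a,
\end{align*}
which is \eqref{eqn:CycleSetIdentity2}. So a solution yields a cycle set. Conversely, given a cycle set, reversing this computation gives the first-coordinate identity, and involutivity of the $r$ built from it is a direct check using Step 1.

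\emph{Step 3: the remaining coordinates (the main obstacle).} I must show that, for an involutive $r$ of this form, the first-coordinate identity implies the full Yang--Baxter equation. For the third coordinate, I would use the symmetric ``right'' version of the argument applied to $\rho$. Alternatively, I would use the fact that $r^{-1} = r$, so the braid relation is equivalent to its inverse, $(r\times\id)(\id\times r)(r\times\id)$ composed with $(\id\times r)(r\times\id)(\id\times r)$ equal to the identity. This lets me transfer the first-coordinate identity to the last coordinate. The middle coordinate should then follow because both sides are bijections of $X^3$ that agree in the outer coordinates and are compatible with the multiplication in $M(X,r)$. I expect the bookkeeping here to be the delicate point, and I would first try to verify it directly by expanding the middle coordinates using $\rho_y(x) = L_{\lambda_x(y)}(x)$ and the cycle identity.

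Finally, nondegeneracy of the solution built from a cycle set requires each $\rho_y$ to be bijective. Since $X$ is finite and $r$ is involutive, this follows from \cite[Theorem 2]{rump2005}, quoted above, once each $\lambda_x = L_x^{-1}$ is known to be bijective.
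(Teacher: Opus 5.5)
Your Steps 1 and 2 are correct. Involutivity forces $\rho_y(x) = L_{\lambda_x(y)}(x)$, so the two constructions are mutually inverse. The first coordinate of the braid relation is equivalent to \eqref{eqn:CycleSetIdentity2}, and the map built from a cycle set is involutive. The gap is Step 3, which is the main content of the converse, and none of your three suggestions closes it.

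The third-coordinate condition is $\rho_z\rho_y = \rho_{\rho_z(y)}\rho_{\lambda_y(z)}$ for all $y,z$. A ``symmetric right version of the argument'' would need exactly this right-handed identity, and bijectivity of the $\rho_y$, as input. You give no way of deriving it from the left identity, so that route is circular. The middle-coordinate argument is circular too: cancellativity of $M(X,r)$ is a property of structure monoids of \emph{solutions}, coming from the $I$-structure, which already uses the Yang--Baxter equation. It cannot be used to prove that $r$ is a solution. Observing that both sides of the braid relation are involutions is the right starting point, but you never say how this moves information from the first coordinate to the others.

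The missing mechanism is as follows. Write $r_1 = r\times\id$, $r_2 = \id\times r$, $F = r_1r_2r_1$, $G = r_2r_1r_2$. Let $f\colon X^3\to X$ be projection to the first coordinate, and set $H = GF = (r_2r_1)^3$.

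Step 2 gives $fF = fG$, and $F^2 = G^2 = \id$, so $fH = fFF = f$ and $fH^{-1} = fFG = fGG = f$. Since $r_1Hr_1 = H^{-1}$, we get $fr_1H = fH^{-1}r_1 = fr_1$. Since $H$ commutes with $(r_2r_1)^{-1} = r_1r_2$, we get $fr_1r_2H = fHr_1r_2 = fr_1r_2$. But
\[ \big(f,\, fr_1,\, fr_1r_2\big)(a,b,c) = \big(a,\, \lambda_a(b),\, \lambda_a\lambda_b(c)\big), \]
which determines $(a,b,c)$ because every $\lambda_a$ is injective. Hence $H = \id$, that is, $G = F^{-1} = F$. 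This uses only involutivity, injectivity of the $\lambda_x$ and the first-coordinate identity, and it handles all three coordinates at once.

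With this in place, your final nondegeneracy step is legitimate. It can also be made self-contained. If $u = \lambda_x(y)$ and $v = \rho_y(x) = u*x$, then \eqref{eqn:CycleSetIdentity} applied to $(u,x,u)$ gives $v*(u*u) = y*y$. Injectivity of $t\mapsto t*t$, which is what \cite[Theorem 2]{rump2005} provides for finite cycle sets, then recovers $u$ from $(y,v)$, and $x = L_u^{-1}(v)$ follows. Thus $\rho_y$ is injective, hence bijective.
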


Suppose $r : X^2 \to X^2$ is a solution to the Yang--Baxter equation. The \emph{structure monoid} associated to this solution is the monoid
\begin{align*}
M(X,r) \coloneqq \langle\; X \mid xy=uv \text{ whenever } r(x,y) = (u,v) \,\rangle_{\text{monoid}}.
\end{align*}
When $r$ is involutive and nondegenerate, one can show that $r$ has $n$ fixed points, so $M(X,r)$ has $\binom{n}{2}$ defining relations. Under the correspondence with cycle sets, one may present the structure monoid as
\begin{align}
M(X,r) \coloneqq \langle\; X \mid x(x*y) = y(y*x) \text{ where } x,y \in X \,\rangle_{\text{monoid}}. \label{eqn:StructureMonoidAlt}
\end{align}

Since the relations in $M(X,r)$ involve terms of the same length, it has a well-defined length function. We will write $\ell(m)$ for the length of $m \in M(X,r)$.

If $r$ is a nondegenerate, involutive solution to the Yang--Baxter equation, then the structure monoid has a number of good properties, of which we recall a handful from \cite[Propositions 3.1 and 3.5]{dehornoy}.

\begin{proposition}
The structure monoid $M(X,r)$ has the following properties:
\begin{enumerate}[{\normalfont (1)},topsep=0pt,itemsep=0pt,wide=0pt]
\item it is left- and right-cancellative;
\item it has unique left- and right-lcms and unique left- and right-gcds; and
\item it has a \emph{Garside element} $\Delta \in M$, an element for which the set of left- and right-divisors coincide, are finite in number, and generate $M$. In $M(X,r)$, $\Delta$ has exactly $2^n$ divisors.
\end{enumerate}
\end{proposition}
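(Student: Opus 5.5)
The statement has three parts: cancellativity, existence and uniqueness of lcms and gcds, and a Garside element with $2^n$ divisors. My plan is to transfer everything to a commutative free monoid via the $I$-type structure of Gateva-Ivanova--Van den Bergh, and then read off each property there.

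\textbf{Step 1: a bijection with $\NN^X$.} Use the cycle set presentation \eqref{eqn:StructureMonoidAlt}. By Rump's correspondence there is a left action of $M=M(X,r)$ on $X$, extending $x\mapsto L_x^{-1}$ or $L_x$ up to convention. The cycle set identity \eqref{eqn:CycleSetIdentity2} is exactly what makes this action respect the defining relations $x(x*y)=y(y*x)$, so it is well defined. With this action I define a 1-cocycle
\begin{align*}
\nu: M\to \NN^X \qquad\text{by}\qquad \nu(x)=e_x,\qquad \nu(ab)=\nu(a)+\psi_a\cdot\nu(b),
\end{align*}
where $\psi_a$ is the permutation attached to $a$.
\begin{enumerate}[{\normalfont (i)},topsep=1pt,itemsep=1pt]
\item Well-definedness on relations is a direct check using \eqref{eqn:CycleSetIdentity2}.
\item Surjectivity follows by induction on length: from $\nu(a)$, add any basis vector $e_y$ by multiplying by $\psi_a^{-1}(y)$.
\item Injectivity follows from the fact that $M$ and $\NN^X$ have the same growth, $\binom{n+d-1}{d}$ elements in each length $d$. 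This equality is Theorem~\ref{thm:Gateva}(3) applied with $\alpha$ trivial.
\end{enumerate}
Hence $\nu$ is bijective and $M$ is a monoid of $I$-type.

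\textbf{Step 2: cancellativity.} Left cancellation is immediate from the cocycle formula. If $ab=ac$, then $\psi_a\nu(b)=\psi_a\nu(c)$, so $\nu(b)=\nu(c)$ and therefore $b=c$. For right cancellation I would use the symmetric structure coming from the right action, namely the $\rho$'s, which by \cite{rump2005} are also bijective. Alternatively, $\Bbbk M$ is a domain by Theorem~\ref{thm:Gateva}(1), and for a monoid algebra $ab=cb$ forces $a=c$.

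\textbf{Step 3: lcms and gcds.} Left divisibility transports to a partial order on $\NN^X$. One checks that $a$ left-divides $b$ if and only if $\nu(a)\leq\nu(b)$ componentwise: given $\nu(a)\le\nu(b)$, Step 1's bijection produces $c$ with $\nu(c)=\psi_a^{-1}(\nu(b)-\nu(a))$, and then $ac=b$. Since componentwise max and min exist in $\NN^X$, left lcms and gcds exist and are unique. Right divisibility is handled by the mirror argument with the right cocycle.

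\textbf{Step 4: the Garside element.} Take $\Delta=\nu^{-1}(\mathbf 1)$, where $\mathbf 1=(1,\dots,1)$. Its left divisors correspond to $\{0,1\}^X$, so there are exactly $2^n$ of them, and they include $X$, hence generate $M$.

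The main obstacle is showing that the right divisors coincide with the left divisors. For this I would verify that the right cocycle also sends $\Delta$ to $\mathbf 1$. Equivalently, I would show $\Delta$ is the lcm of $X$ on both sides, so that for $\Delta=ab$ the element $b$ has right cocycle in $\{0,1\}^X$ whenever $\nu(a)\in\{0,1\}^X$. My approach is an induction using the fact that the permutations $\psi$ preserve $\{0,1\}^X$ and $\mathbf 1$. Counting then matches the $2^n$ divisors on each side.
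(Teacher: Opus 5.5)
The paper gives no proof of this statement; it quotes it from Dehornoy. Your Steps 1--3 are a sound route via the $I$-structure (your $\nu$ is the inverse of the paper's). Three points to tighten there:
\begin{itemize}
\item The hedge in Step 1 must be resolved as $\psi_x=L_x^{-1}$. That choice makes $\psi_x\psi_{x*y}=\psi_y\psi_{y*x}$ follow from \eqref{eqn:CycleSetIdentity2}, and it gives $\nu(x(x*y))=e_x+e_y$, which is symmetric in $x,y$. With $\psi_x=L_x$ the relations are not respected.
\item The ``mirror argument'' in Step 3 is justified by $M^{\mathrm{op}}=M(X,\tau r\tau)$, where $\tau$ is the flip and $\tau r\tau$ is again involutive and nondegenerate. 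This is where bijectivity of the $\rho_y$ enters.
\item Step 1(iii) is legitimate because Theorem~\ref{thm:Gateva} is a black box here. Be aware, though, that part (3) of that theorem is itself a consequence of this very bijection, so you are in effect citing Proposition~\ref{prop:IStructure}.
\end{itemize}
The domain argument for right cancellation is correct.

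The genuine gap is the step you flag yourself: that the left and right divisors of $\Delta$ coincide. You reduce it to $\nu'(\Delta)=\mathbf{1}$ for a right cocycle $\nu'$, i.e.\ to every $x\in X$ right-dividing $\Delta$. But you offer only an unspecified induction on the fact that the $\psi$'s preserve $\{0,1\}^X$ and $\mathbf{1}$, and that fact does not by itself give it. With $a_y=\nu^{-1}(\mathbf{1}-e_y)$ one has $\Delta=a_y\,\psi_{a_y}^{-1}(y)$. So what is needed is that $y\mapsto\psi_{a_y}^{-1}(y)$ is a bijection of $X$, which is an injectivity statement about right factors. Also, your sentence about $b$ having right cocycle in $\{0,1\}^X$ is automatic once $\nu'(\Delta)=\mathbf{1}$; the factor that needs controlling is $a$.

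The missing input is right cancellativity, which you already have from Step 2, and with it no right cocycle is needed.
\begin{itemize}
\item If $\Delta=ab$, then $\mathbf{1}=\nu(a)+\psi_a\nu(b)$. Hence $\nu(b)=\psi_a^{-1}(\mathbf{1}-\nu(a))\in\{0,1\}^X$, and $b$ left-divides $\Delta$ by Step 3. Thus every right divisor is a left divisor.
\item The factorisations $\Delta=ab$ are in bijection with the left divisors (by left cancellation) and with the right divisors (by right cancellation). So both sets have exactly $2^n$ elements, and the inclusion is an equality.
\end{itemize}

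Alternatively, comparing $\nu$ of both sides gives $x\Delta=\Delta\,\psi_\Delta^{-1}(x)$, which is Lemma~\ref{lem:PropertiesOfDelta}(3). This defines an endomorphism $\theta$ of $M$ with $m\Delta=\Delta\theta(m)$. Since $\theta$ permutes $X$, it is an automorphism, and it fixes $\Delta$. A factorisation $\Delta=ab$ then yields $\Delta=b\,\theta(a)=\theta^{-1}(b)\,a$, giving both inclusions at once.
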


Suppose that $M$ is a monoid and that $\alpha : M \times M \to \Bbbk^\times$ is a monoid $2$-cocycle, i.e.~a function which satisfies
\begin{align*}
\alpha(a,b) \alpha(ab,c) = \alpha(a,bc) \alpha(b,c)
\end{align*}
for all $a,b,c \in M$. We say that $\alpha$ is \emph{normalised} if $\alpha(a,1) = 1 = \alpha(1,a)$ for all $a \in M$. Without loss of generality, we may assume that $\alpha$ is normalised, and we will make this assumption throughout. The \emph{twisted semigroup algebra} $\Bbbk^\alpha M$ is the unital $\Bbbk$-algebra whose $\Bbbk$-vector space structure coincides with that of the semigroup algebra $\Bbbk M$, but with multiplication given by
\begin{align*}
a \star b \coloneqq \alpha(a,b) ab.
\end{align*}
It is straightforward to show that a presentation for $\Bbbk^\alpha M(X,r)$ is
\begin{align*}
\Bbbk^\alpha M(X,r) \cong \frac{\Bbbk \langle X \rangle}{\big\langle xy - \frac{\alpha(x,y)}{\alpha(u,v)} uv \mid x,y,u,v \in X \text{ and } r(x,y) = (u,v) \big\rangle}.
\end{align*}

For the structure monoid $M(X,r)$ we shall refer to the semigroup algebra $\Bbbk M(X,r)$ as the \emph{structure algebra} associated to the solution $r : X^2 \to X^2$. If $\alpha$ is a $2$-cocycle on $M$, then we call $\Bbbk^\alpha M(X,r)$ the \emph{twisted structure algebra} associated to $r$.

\subsection{\texorpdfstring{$I$}{I}-structures}
By \cite{gateva,dehornoy}, the structure monoid $M(X,r)$ associated to a solution to the Yang--Baxter equation can be equipped with an \emph{$I$-structure}. This was the main tool used to prove the result of Gateva-Ivanova--Van den Bergh given in Theorem \ref{thm:Gateva}.

\begin{defn}
Let $M$ be a monoid with generating set $X$ and let $\NN^X$ be the free abelian monoid with generating set $X$, written multiplicatively. A \emph{(right) $I$-structure} on $M$ is a bijective map $\nu : \NN^X \to M$ satisfying $\nu(1) = 1$, $\nu(x) = x$ for all $x \in X$, and
\begin{align}
\{ \nu(ax) \mid x \in X \} = \{ \nu(a) x \mid x \in X \} \quad \text{for all } x \in X. \label{eqn:IStructure}
\end{align}
In particular, \eqref{eqn:IStructure} is equivalent to the existence of, for each $a \in \NN^X$, a permutation $\psi_a \in \Sym(X)$ satisfying
\begin{align*}
\nu(ax) = \nu(a) \psi_a(x) \quad \text{for all } x \in X.
\end{align*}
\end{defn}

We introduce some notation, following \cite{dehornoy}.

\begin{defn}
Let $X$ be a cycle set. For $n \geqslant 1$, inductively define maps $\Omega_n : X^n \to X$ by $\Omega_1(x_1) = x_1$ and
\begin{align*}
\Omega_n(x_1, \dots, x_n) = \Omega_{n-1}(x_1, \dots, x_{n-1}) * \Omega_{n-1}(x_1, \dots, x_{n-2}, x_n).
\end{align*}
Furthermore, for $n \geqslant 1$, define maps $\Pi_n : X^n \to X^n$ by
\begin{align*}
\Pi_n(x_1, \dots, x_n) = \big(\Omega_1(x_1), \Omega_2(x_1, x_2), \dots, \Omega_n(x_1, \dots, x_n)\big).
\end{align*}
\end{defn}

We caution the reader that our definition of $\Pi_n$ is slightly different from the one given by Dehornoy: he defines it to be the formal expression
\begin{align*}
\Omega_1(x_1) \cdot \Omega_2(x_1, x_2) \cdot \ldots \cdot \Omega_n(x_1, \dots, x_n)\big).
\end{align*}

Using these definitions, Dehornoy defines what it means for a solution to the Yang--Baxter equation to have \emph{class} $d$. 

\begin{defn} \label{defn:DehornoyClass}
Let $r : X^2 \to X^2$ be a nondegenerate, involutive solution to the Yang--Baxter equation. We say that $r$ has \emph{Dehornoy class} $d$ if
\begin{align*}
\Omega_{d+1}(x, \dots, x,y) = y \quad \text{for all } x,y \in X.
\end{align*}
If $c$ is minimal amongst all such $d$, then we call it the \emph{minimal Dehornoy class}.
\end{defn}

By \cite[Lemma 5.4]{dehornoy}, every such solution is of Dehornoy class $d$ for some $d < \infty$. The proof also shows that if $c \geqslant 1$ is the minimal Dehornoy class and $r$ is also of class $d$, then $c \mid d$.

Dehornoy also proves the following in \cite[Lemmas 2.2 and 2.6]{dehornoy}:

\begin{lemma} \label{lem:OmegaAndPiProp}
\begin{enumerate}[{\normalfont (1)},topsep=0pt,itemsep=0pt,wide=0pt]
\item For all $\pi \in S_{n-1}$ and $x_1, \dots, x_n \in X$, the map $\Omega_n$ satisfies
\begin{align*}
\Omega_n(x_{\pi(1)}, \dots, x_{\pi(n-1)}, x_n) = \Omega_n(x_1, \dots, x_n).
\end{align*}
\item The map $\Pi_n$ is a symmetric function on $X^n$.
\end{enumerate}
\end{lemma}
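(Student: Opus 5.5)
Both parts go by induction, and in each case it suffices to check invariance under adjacent transpositions, since these generate the relevant symmetric group. Part (2) cannot mean equality of tuples: already $\Pi_2(x,y) = (x, x*y)$ differs from $\Pi_2(y,x) = (y, y*x)$. So I read (2) in Dehornoy's sense: the product $\Omega_1(x_1)\,\Omega_2(x_1,x_2)\cdots\Omega_n(x_1,\dots,x_n)$, evaluated in $M(X,r)$, is invariant under all permutations of $(x_1,\dots,x_n)$. Pinning down this interpretation is the only real subtlety I expect; the rest is bookkeeping with the recursive definition of $\Omega_n$.

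For (1) I induct on $n$. The cases $n \leqslant 2$ are vacuous. For $n \geqslant 3$, I check invariance under the transposition of positions $i$ and $i+1$ for each $i \leqslant n-2$. Recall
\begin{align*}
\Omega_n(x_1,\dots,x_n) = \Omega_{n-1}(x_1,\dots,x_{n-1}) * \Omega_{n-1}(x_1,\dots,x_{n-2},x_n).
\end{align*}
\begin{enumerate}[{\normalfont (a)},topsep=0pt,itemsep=0pt]
\item If $i+1 \leqslant n-2$, the swap only permutes the first $n-2$ arguments of each $\Omega_{n-1}$ factor. Both factors are therefore unchanged by the induction hypothesis.
\item If $i = n-2$, set $a = \Omega_{n-2}(x_1,\dots,x_{n-3},x_{n-2})$, $b = \Omega_{n-2}(x_1,\dots,x_{n-3},x_{n-1})$ and $c = \Omega_{n-2}(x_1,\dots,x_{n-3},x_n)$. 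Unwinding the definition once more gives $\Omega_{n-1}(x_1,\dots,x_{n-1}) = a*b$ and $\Omega_{n-1}(x_1,\dots,x_{n-2},x_n) = a*c$, so $\Omega_n = (a*b)*(a*c)$. Swapping $x_{n-2}$ and $x_{n-1}$ exchanges the roles of $a$ and $b$, giving $(b*a)*(b*c)$. These are equal by the cycle set identity \eqref{eqn:CycleSetIdentity}.
\end{enumerate}
For $n=3$, where $a=x_1$, $b=x_2$, $c=x_3$, case (b) is exactly \eqref{eqn:CycleSetIdentity}.

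For (2), I consider the transposition of positions $i$ and $i+1$, where $1 \leqslant i \leqslant n-1$, and look at how each factor of the product changes.
\begin{enumerate}[{\normalfont (a)},topsep=0pt,itemsep=0pt]
\item The factors $\Omega_k$ with $k < i$ do not involve $x_i$ or $x_{i+1}$, so they are unchanged.
\item The factors $\Omega_k$ with $k \geqslant i+2$ have both swapped positions among their first $k-1$ arguments, so they are unchanged by part (1).
\item Only the adjacent pair $\Omega_i\,\Omega_{i+1}$ remains. Set $a = \Omega_i(x_1,\dots,x_{i-1},x_i)$ and $b = \Omega_i(x_1,\dots,x_{i-1},x_{i+1})$. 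The recursive definition gives $\Omega_{i+1}(x_1,\dots,x_{i+1}) = a*b$, so this pair equals $a\,(a*b)$. After the swap it becomes $b\,(b*a)$. These two words are equal in $M(X,r)$ by the defining relation $x(x*y) = y(y*x)$ in \eqref{eqn:StructureMonoidAlt}.
\end{enumerate}
Since the monoid is associative, replacing this adjacent pair leaves the whole product unchanged. This proves invariance under every adjacent transposition, and hence under every permutation.
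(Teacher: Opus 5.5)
Your argument is correct and is essentially the standard one from Dehornoy's Lemmas 2.2 and 2.6, which the paper cites rather than reproves; its later remark that $\Pi_m(\mathbf{x})$ and $\Pi_m(\widetilde{\mathbf{x}})$ differ only in coordinates $i,i+1$, related by a single defining relation, is exactly your step (c). You are also right that, with the paper's tuple-valued $\Pi_n$, part (2) must be read as symmetry of $\mu_n \circ \Pi_n : X^n \to M(X,r)$, which is how it is used to define $\nu$ and in Lemma~\ref{lem:PropertiesOfDelta}.
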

 
Setting $n=3$ in part (1) recovers \eqref{eqn:CycleSetIdentity}, and so this is a natural extension of that property to larger sets.

Dehornoy then goes on to show that we can use these maps to define an $I$-structure on the structure monoid $M(X,r)$:

\begin{proposition}[{\cite[Proposition 4.2]{dehornoy}}] \label{prop:IStructure}
Let $\mu_n : X^n \to M(X,r)$ be the multiplication map. The map
\begin{align*}
\nu : \NN^X \to M(X,r), \quad \nu_n(x_1 \dots x_n) = \mu \circ \Pi_n(x_1, \dots, x_n)
\end{align*}
defines an $I$-structure on $M(X,r)$.
\end{proposition}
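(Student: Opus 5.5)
The plan is to check the axioms of an $I$-structure directly, using only the cycle set identity \eqref{eqn:CycleSetIdentity} and Lemma \ref{lem:OmegaAndPiProp}. The steps are: show $\nu$ is well defined, identify the permutations $\psi_a$, then prove bijectivity by constructing an explicit inverse.

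\textbf{Well-definedness and the permutations $\psi_a$.} A monomial $x_1\cdots x_n\in\NN^X$ only determines the tuple $(x_1,\dots,x_n)$ up to reordering. By Lemma \ref{lem:OmegaAndPiProp}(2), $\Pi_n$ is symmetric, so $\nu$ is well defined. Clearly $\nu(1)=1$ and $\nu(x)=\Omega_1(x)=x$.

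For $a=x_1\cdots x_n$, set
\begin{align*}
\psi_a(x)\coloneqq\Omega_{n+1}(x_1,\dots,x_n,x).
\end{align*}
This is independent of the ordering of the $x_i$ by Lemma \ref{lem:OmegaAndPiProp}(1). Since $\Pi_{n+1}$ is symmetric, we may compute $\nu(ax)$ with $x$ placed last, which gives
\begin{align*}
\nu(ax)=\mu\big(\Pi_n(x_1,\dots,x_n)\big)\,\Omega_{n+1}(x_1,\dots,x_n,x)=\nu(a)\psi_a(x).
\end{align*}
It remains to see that $\psi_a$ is a permutation. We argue by induction on $n$, using
\begin{align*}
\psi_a=L_{\Omega_n(x_1,\dots,x_n)}\circ\psi_{a'}, \qquad a'=x_1\cdots x_{n-1}.
\end{align*}
This is a composite of bijections, and the base case is $\psi_1=\id$.

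\textbf{Surjectivity.} Let $M_n$ denote the set of elements of length $n$. By induction, $\nu$ maps the degree-$n$ monomials onto $M_n$. For the inductive step,
\begin{align*}
\nu\big((\NN^X)_{n+1}\big)=\{\nu(a)\psi_a(x)\}=\{\nu(a)\,y \mid y\in X\}=M_nX=M_{n+1},
\end{align*}
where the middle equality uses that each $\psi_a$ is a permutation.

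\textbf{Injectivity, the main obstacle.} Injectivity does not follow by counting, since we have no a priori bound on $|M_n|$. Instead I would build an inverse.

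First, $\Pi_n:X^n\to X^n$ is a bijection. It is triangular: given $x_1,\dots,x_{k-1}$, the $k$-th output $\Omega_k(x_1,\dots,x_{k-1},x_k)=\psi_{x_1\cdots x_{k-1}}(x_k)$ is a bijective function of $x_k$.

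Next, define $\theta:X^n\to\NN^X$ by sending $(y_1,\dots,y_n)$ to the commutative monomial of $\Pi_n^{-1}(y_1,\dots,y_n)$. Then $\theta\circ\mu^{-1}$ will be an inverse to $\nu$, provided $\theta$ is constant on the fibres of $\mu$. These fibres are generated by replacing an adjacent pair $(y_i,y_{i+1})=(u,u*v)$ with $(v,v*u)$.

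The key computation is as follows. Take $(x_1,\dots,x_n)=\Pi_n^{-1}(y)$ and put
\begin{align*}
u=\Omega_i(x_1,\dots,x_{i-1},x_i), \qquad v=\Omega_i(x_1,\dots,x_{i-1},x_{i+1}).
\end{align*}
Transposing $x_i$ and $x_{i+1}$ has the following effect on $\Pi_n$:
\begin{enumerate}[{\normalfont (1)},topsep=0pt,itemsep=0pt,leftmargin=35pt]
\item coordinates other than $i,i+1$ are unchanged, by Lemma \ref{lem:OmegaAndPiProp}(1);
\item coordinates $i,i+1$ change from $(u,u*v)$ to $(v,v*u)$, directly from the recursive definition of $\Omega_{i+1}$.
\end{enumerate}
Hence every defining relation applied to $y$ corresponds to a transposition of entries of $\Pi_n^{-1}(y)$, and such a transposition leaves the commutative monomial unchanged. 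So $\theta$ factors through $\mu$.

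Finally, $\theta(\Pi_n(x))=x_1\cdots x_n$, so $\theta\circ\mu^{-1}\circ\nu=\id$. Therefore $\nu$ is injective, and hence a bijection.
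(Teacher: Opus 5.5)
Your proof is correct and is essentially Dehornoy's argument, which the paper cites rather than reproves; its key step is the one recalled in Example~\ref{ex:NormalForm}, namely well-definedness from the symmetry of $\mu_n\circ\Pi_n$ (this is how Lemma~\ref{lem:OmegaAndPiProp}(2) must be read, since $\Pi_n$ itself is a bijection), the $I$-structure condition via $\psi_a(x)=\Omega_{n+1}(x_1,\dots,x_n,x)$, and injectivity because a single defining relation applied to $\Pi_n(s)$ amounts to swapping two adjacent entries of $s$ (the relation available at position $i$ is unique because $L_{y_i}$ is bijective, which is exactly what makes your $u,v$ match). The only difference is cosmetic: surjectivity, which you obtain by induction, also follows at once from the bijectivity of $\Pi_n$.
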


Dehornoy proves a number of useful properties about the map $\nu$. We list below those properties which we will have need for in the present article.

\begin{lemma}[{\cite[Lemma 4.5]{dehornoy}}] \label{lem:PsiToProduct}
If $a, b \in \NN^X$ then
\begin{align*}
\nu(ab) =\nu(a) \nu(\psi_a (b)) \quad \text{and} \quad \psi_{ab} = \psi_{\psi_a(b)} \circ \psi_a,
\end{align*}
where we abuse notation and write $\psi_a(b)$ for the result of applying $\psi_a$ to $b$ componentwise.
\end{lemma}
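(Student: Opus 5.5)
The plan is to reduce both identities to a single combinatorial identity for the maps $\Omega_n$, and then get the statement about $\psi$ from the statement about $\nu$ by cancellation.

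\textbf{Step 1: $\psi_a$ in terms of $\Omega$.} Write $a = x_1 \cdots x_k$. By Lemma \ref{lem:OmegaAndPiProp}(2), $\nu(a)$ does not depend on the ordering chosen, so we may evaluate $\nu(ax)$ as $\mu\circ\Pi_{k+1}(x_1,\dots,x_k,x)$. This equals $\nu(a)\,\Omega_{k+1}(x_1,\dots,x_k,x)$. Since $M(X,r)$ is left-cancellative, it follows that
\begin{align*}
\psi_a(x) = \Omega_{k+1}(x_1,\dots,x_k,x).
\end{align*}
Lemma \ref{lem:OmegaAndPiProp}(1) shows this is independent of the order of $x_1,\dots,x_k$. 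We abbreviate it as $\Omega(a,x)$.

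\textbf{Step 2: the key identity (main step).} Fix $a$ of length $k$, let $b_1,\dots,b_j \in X$, and set $c_i = \psi_a(b_i) = \Omega(a,b_i)$. We prove, by induction on $j$, that
\begin{align*}
\Omega_{k+j}(x_1,\dots,x_k,b_1,\dots,b_j) = \Omega_j(c_1,\dots,c_j).
\end{align*}
The case $j=1$ is the definition of $c_1$.

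For $j \geq 2$, apply the recursive definition of $\Omega_{k+j}$ to the left-hand side. This expresses it as
\begin{align*}
\Omega_{k+j-1}(a,b_1,\dots,b_{j-1}) * \Omega_{k+j-1}(a,b_1,\dots,b_{j-2},b_j).
\end{align*}
The induction hypothesis applies to both factors, since each has $j-1$ letters after $a$. It turns the product into
\begin{align*}
\Omega_{j-1}(c_1,\dots,c_{j-1}) * \Omega_{j-1}(c_1,\dots,c_{j-2},c_j),
\end{align*}
which is $\Omega_j(c_1,\dots,c_j)$ by the recursive definition again. I expect this to be the main obstacle. The difficulty is bookkeeping: the recursion peels off the last two entries, and for $j\geq 2$ both of those entries lie in the $b$-block, so the recursion on the two sides matches up. Getting this alignment right is where care is needed.

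\textbf{Step 3: the first identity.} Write $b = b_1\cdots b_j$ and compute $\nu(ab)$ as $\mu\circ\Pi_{k+j}(x_1,\dots,x_k,b_1,\dots,b_j)$.
\begin{itemize}
\item The first $k$ coordinates multiply to $\nu(a)$.
\item For each $i$, the coordinate in position $k+i$ is $\Omega_{k+i}(a,b_1,\dots,b_i)$, which equals $\Omega_i(c_1,\dots,c_i)$ by Step 2.
\end{itemize}
The product of these remaining coordinates is therefore $\mu\circ\Pi_j(c_1,\dots,c_j) = \nu(\psi_a(b))$. This gives $\nu(ab) = \nu(a)\,\nu(\psi_a(b))$.

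\textbf{Step 4: the identity for $\psi$.} For $x \in X$, expand $\nu(abx)$ in two ways using Step 3 and the defining property of $\psi$.
\begin{itemize}
\item Applying Step 3 with the pair $(a, bx)$, and then the definition of $\psi$,
\begin{align*}
\nu(abx) = \nu(a)\,\nu(\psi_a(b)\psi_a(x)) = \nu(a)\,\nu(\psi_a(b))\,\psi_{\psi_a(b)}(\psi_a(x)).
\end{align*}
\item Applying the definition of $\psi$ first, and then Step 3 with the pair $(a,b)$,
\begin{align*}
\nu(abx) = \nu(ab)\,\psi_{ab}(x) = \nu(a)\,\nu(\psi_a(b))\,\psi_{ab}(x).
\end{align*}
\end{itemize}
Left cancellation in $M(X,r)$ equates the final letters, so $\psi_{ab}(x) = \psi_{\psi_a(b)}(\psi_a(x))$ for every $x \in X$. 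This is the second identity.
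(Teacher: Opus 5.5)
Your proof is correct. The paper does not prove this lemma itself; it quotes it from \cite[Lemma 4.5]{dehornoy}, so there is no internal argument to compare against. What you give is a self-contained derivation that uses only ingredients the paper already records:
\begin{itemize}
\item the symmetry of $\mu_n\circ\Pi_n$ and the invariance of $\Omega_n$ under permuting its first $n-1$ arguments (Lemma \ref{lem:OmegaAndPiProp});
\item left cancellativity of $M$;
\item the defining property $\nu(ax)=\nu(a)\psi_a(x)$.
\end{itemize}

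The shift identity in Step 2 is a routine induction, and your concern about alignment does not materialise. For $j\geq 2$ the last two slots both lie in the $b$-block. You apply the hypothesis to the tuple $(b_1,\dots,b_{j-2},b_j)$, so the hypothesis should be stated for all $(j-1)$-tuples; you use it that way implicitly. Step 2 then gives the first identity coordinatewise. Expanding $\nu(abx)$ in two ways in Step 4 yields the composition rule for $\psi$ by left cancellation.

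A side benefit is that Step 1 also proves $\psi_a(x)=\Omega_{k+1}(x_1,\dots,x_k,x)$. This is exactly the formula \eqref{eqn:AltForyi} that the paper later attributes to the same lemma of Dehornoy.

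Two minor points deserve a sentence each. First, cancellation in Step 1 gives an equality of two length-one elements of $M$. This is an equality in $X$ because the defining relations are homogeneous of length $2$. Second, Lemma \ref{lem:OmegaAndPiProp}(2) must be read as symmetry of $\mu_n\circ\Pi_n$, as you do, since $\Pi_n$ itself is a bijection of $X^n$.
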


Using $\nu$, it is easy to determine a Garside element $\Delta$ for $M$, as well as some of its properties. Henceforth, we write $\Delta$ for the Garside element of minimal length $n$. We will speak of this element as being \emph{the} Garside element in $M$.

\begin{lemma} \label{lem:PropertiesOfDelta}
\begin{enumerate}[{\normalfont (1)},topsep=0pt,itemsep=0pt,wide=0pt]
\item Let $\delta = \prod_{x \in X} x \in \NN^X$. Then the Garside element of $M(X,r)$ is given by $\Delta = \nu(\delta)$.
\item There is a bijective correspondence between factorisations $\Delta = y_1 \dots y_n$ in $M$ with each $y_i \in X$ and permutations of $X$.
\item The element $\Delta$ satisfies the following property:
\begin{align*}
x \Delta = \Delta \psi_\delta(x). 
\end{align*}
\end{enumerate}
\end{lemma}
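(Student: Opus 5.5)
The plan is to derive all three parts from the $I$-structure $\nu$ of Proposition~\ref{prop:IStructure} and the factorisation rule $\nu(ab)=\nu(a)\nu(\psi_a(b))$ of Lemma~\ref{lem:PsiToProduct}. I will use two facts repeatedly: $\nu$ is a bijection $\NN^X\to M$, and $M$ is cancellative. Call $s\in\NN^X$ \emph{squarefree} if $s \mid \delta$. The key observation is that each $\psi_a$ is a permutation of $X$. Hence, applied componentwise, $\psi_a$ sends squarefree monomials of degree $k$ to squarefree monomials of degree $k$, and in particular $\psi_a(\delta)=\delta$ for every $a$.

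Part (3) is then immediate. Since $\NN^X$ is commutative, $x\delta=\delta x$. On one hand, Lemma~\ref{lem:PsiToProduct} gives
\[
\nu(x\delta)=\nu(x)\nu(\psi_x(\delta))=x\,\nu(\delta)=x\Delta .
\]
On the other hand, the defining property of $\psi_\delta$ gives $\nu(\delta x)=\nu(\delta)\psi_\delta(x)=\Delta\,\psi_\delta(x)$. Comparing the two expressions yields $x\Delta=\Delta\psi_\delta(x)$.

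For part (1), I first identify the left divisors of $\Delta$.
\begin{enumerate}[{\normalfont (i)},topsep=0pt,itemsep=0pt]
\item For any squarefree $b$, write $\delta=bc$. Then $\Delta=\nu(b)\,\nu(\psi_b(c))$, so $\nu(b)$ is a left divisor. Moreover $\psi_b(c)$ is again squarefree, so $\nu(\psi_b(c))$ is a right divisor.
\item Conversely, suppose $\Delta=m m'$ with $\ell(m)=k$. I would write $m=y_1\cdots y_k$ and induct on $k$. The inductive step is to show that if $\nu(b)$ left-divides $\Delta$ with $b$ squarefree and $\nu(b)y$ also left-divides $\Delta$, then $\nu(b)y=\nu(bx)$ for $x=\psi_b^{-1}(y)$, with $bx$ squarefree.
\end{enumerate}
The equality $\nu(b)y=\nu(bx)$ in (ii) is just the defining property of $\psi_b$. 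The hard part is showing $x\nmid b$. I expect to handle this with a counting argument. The map $b\mapsto \psi_b(\delta/b)$ sends left divisors to right cofactors, and by cancellativity it is injective. This exhibits $2^n$ distinct divisors of the form $\nu(\text{squarefree})$. Since $\Delta$ has exactly $2^n$ divisors (Proposition~(3) above, citing \cite{dehornoy}), these are all the left divisors, and likewise all the right divisors. So the two sets coincide, they are finite, and they contain $X$, so they generate $M$. Hence $\Delta$ is Garside.

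Minimality of the length $n$ comes from requiring that every $x\in X$ left-divide the Garside element. The right lcm of $X$ is $\nu(\delta)$, because any $\nu(a)$ divisible by all $x$ must have $a$ divisible by every $x$ (by the divisor description above). I expect this step, pinning down left divisors of $\nu(a)$ as exactly the $\nu(b)$ with $b\mid a$, to be the main obstacle. If the counting shortcut proves insufficient, I would cite Dehornoy's description of the lcm structure of $M$ in terms of $\nu$.

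For part (2), the plan is to pass through chains of left divisors. A factorisation $\Delta=y_1\cdots y_n$ with $y_i\in X$ gives the chain of prefixes $y_1\cdots y_k$, each of which is a left divisor of $\Delta$. By part (1), each prefix equals $\nu(b_k)$ for a unique squarefree $b_k$ of degree $k$. Since $\nu(b_k)y_{k+1}=\nu(b_k\psi_{b_k}^{-1}(y_{k+1}))$, injectivity of $\nu$ gives $b_{k+1}=b_k x_{k+1}$ with $x_{k+1}=\psi_{b_k}^{-1}(y_{k+1})$. Thus the factorisation determines an ordering $(x_1,\dots,x_n)$ of $X$, equivalently a permutation.

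Conversely, an ordering $(x_1,\dots,x_n)$ determines the factors recursively by $y_{k+1}=\psi_{x_1\cdots x_k}(x_{k+1})$, and the product of these factors is $\nu(\delta)=\Delta$. The two constructions are mutually inverse by cancellativity and injectivity of $\nu$, which gives the claimed bijection.
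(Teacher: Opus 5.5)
\textbf{The gap is in your self-contained argument for (1).} Your upper bound, ``$\Delta$ has exactly $2^n$ divisors'', is a fact about the Garside element of the earlier Proposition. Applying it to $\nu(\delta)$ presupposes that $\nu(\delta)$ is that element, which is precisely the content of (1). So the counting step is circular, and without it you have only a lower bound: $2^n$ divisors of the form $\nu(s)$ with $s\mid\delta$. The paper sidesteps this by citing \cite[Lemma 5.6]{dehornoy}, which is your fallback.

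The missing idea is that the step you single out as the main obstacle is a two-line consequence of Lemma~\ref{lem:PsiToProduct}. Suppose $\nu(a)=\nu(b)m$, and write $m=\nu(e)$ by surjectivity of $\nu$. Then $\nu(b)\nu(e)=\nu(b\,\psi_b^{-1}(e))$, so injectivity gives $a=b\,\psi_b^{-1}(e)$, i.e.\ $b\mid a$. Together with your step (i), the left divisors of $\nu(a)$ are exactly the $\nu(b)$ with $b\mid a$.

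Now take $a=\delta$. By left cancellation, every right divisor is the cofactor $\nu(\psi_b(\delta/b))$ of some $\nu(b)$ with $b\mid\delta$, hence is $\nu$ of a squarefree monomial. Your injectivity of $b\mapsto\psi_b(\delta/b)$ on the $2^n$ squarefree monomials makes this map a bijection. So left and right divisors both equal $\{\nu(s) : s\mid\delta\}$, with no count of the divisors of $\Delta$ needed.

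The same characterisation gives minimality. A Garside element has every $x\in X$ among its divisors, since its divisors generate $M$ and $x$ has length $1$. So it equals $\nu(a)$ with $\delta\mid a$, forcing length at least $n$, with equality only when $a=\delta$.

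\textbf{Parts (2) and (3) are fine.} Part (3) is the paper's computation. Part (2) is the paper's argument with the bijectivity of $\Pi_n$ made explicit: your recursion $y_{k+1}=\psi_{x_1\cdots x_k}(x_{k+1})$ is exactly $\Pi_n$ by \eqref{eqn:AltForyi}, and $x_{k+1}=\psi_{b_k}^{-1}(y_{k+1})$ inverts it.

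Note that (2) does not need the divisor description from (1). Bijectivity of $\nu$ alone gives a unique $b_k$ with $\nu(b_k)=y_1\cdots y_k$. Then $\nu(b_n)=\Delta=\nu(\delta)$ forces $b_n=\delta$, which is what makes $(x_1,\dots,x_n)$ an ordering of $X$.
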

\begin{proof}
\indent (1) This is \cite[Lemma 5.6]{dehornoy}. \\
\indent (2) We first show that each permutation gives rise to a factorisation of $\Delta$. Write $X = \{x_1, \dots, x_n\}$ and let $\sigma \in S_n$. Also write Write $\Pi_n(x_{\sigma(1)}, \dots, x_{\sigma(n)}) = (y_1, \dots, y_n)$. Then, by \cite[Lemma 2.6]{dehornoy},
\begin{align*}
y_1 \dots y_n = \mu_n\big(\Pi_n(x_{\sigma(1)}, \dots, x_{\sigma(n)})\big) = \nu(x_{\sigma(1)} \dots x_{\sigma(n)}) = \nu(\delta) = \Delta.
\end{align*}
Moreover, distinct permutations give rise to distinct factorisations since $\Pi_n$ is bijective by \cite[Lemma 2.3]{dehornoy}. 

Conversely, suppose $\Delta = y_1 \dots y_n$ is a factorisation of $\Delta$ with $y_i \in X$. Since $\Pi_n$ is surjective, there is a unique $(s_1, \dots, s_n) \in X^n$ with $\Pi_n(s_1, \dots, s_n) = (y_1, \dots, y_n)$. Then
\begin{align*}
\nu(s_1 \dots s_n) = \mu_n \Pi_n(x_1, \dots, x_n) = y_1 \dots y_n = \Delta = \nu(\delta),
\end{align*}
so injectivity of $\nu$ gives $\delta = s_1 \dots s_n$ in $\NN^X$. Thus $(s_1, \dots, s_n)$ is permutation of the elements of $X$, and this permutation is uniquely determined.

(3) Using Lemma \ref{lem:PsiToProduct}, we find
\begin{gather*}
x \Delta = \nu(x) \nu(\delta) = \nu(x \psi_{x}^{-1}(\delta)) = \nu(x\delta) = \nu(\delta x) = \nu(\delta) \psi_\delta(x) = \Delta \psi_\delta(x). \qedhere
\end{gather*}
\end{proof}

Using the theory developed above, it is possible to define a useful nontrivial $2$-cocycle on any structure monoid $M(X,r)$, provided that $|X| \geqslant 2$.

\begin{example} \label{ex:NormalForm} 
Write $X = \{ x_1, \dots, x_n \}$ and define an ordering on $X$ by $x_1 \prec x_2 \prec \dots \prec x_n$. Write $X^* = \bigcup_{m \in \NN} X^m$ and define the \emph{inversion number} of $S = (s_1, \dots, s_m) \in X^*$ by
\begin{align*}
\inv(S) \coloneqq |\{(i,j) \mid i>j \text{ and } s_i \prec s_j \}|.
\end{align*}
We use this to define a normal form on elements of $M$. Given $a \in M$ of length $m$, we define the normal form of $a$, denoted $\NF(a) \in X^m$, to be the uniquely determined $\hat{a} \in X^m$ with $\mu_m(\hat{a}) = a$ and $\inv(\Pi_m^{-1}(\hat{a}))=0$. We also write 
\begin{align*}
\varepsilon(a) = (-1)^{\inv(\Pi_m^{-1}(a))}.
\end{align*}

Let $s = (s_1, \dots, s_m)$ and $s' = (s_1', \dots, s_m')$ be elements of $X^m$ with $\mu_m(s) = \mu_m(s')$. The proof of \cite[Proposition 4.2]{dehornoy} shows that if the equality $\mu_m(s) = \mu_m(s')$ can be witnessed by applying exactly one of the defining relations of $M$, then $\Pi_m^{-1}(s)$ and $\Pi_m^{-1}(s')$ differ by transposing two adjacent coordinates. In this case, we have $\varepsilon(s) = -\varepsilon(s')$.

Now suppose that $s, s' \in X^m$ with $\mu_m(s) = \mu_m(s')$, and suppose that there is a sequence of elements $s_i \in X^m$,
\begin{align*}
s = s_0 \to s_1 \to s_2 \to \dots \to s_{k-1} \to s_k = s',
\end{align*}
with $\mu_m(s_i) = \mu_m(s)$ for all $i$, and the equality $\mu_m(s_i) = \mu_m(s_{i+1})$ is witnessed by applying exactly one defining relation from $M$. It follows that $\varepsilon(s) = (-1)^k \varepsilon(s')$ or, equivalently, 
\begin{align*}
\frac{\varepsilon(s)}{\varepsilon(s')} = (-1)^k.
\end{align*}
This quantity depends only on the start and end points, so the parity of the number of relations required to witness the equality $\mu_m(s) = \mu_m(s')$ is well-defined. Write $\kappa(s,s') \in \ZZ/2\ZZ$ for this quantity. 

We define our claimed $2$-cocycle by
\begin{align*}
\rho : M \times M \to \Bbbk^\times, \quad \rho(a,b) = (-1)^{\inv \big( \Pi_{\ell(ab)}^{-1}(\NF(a) \frown \NF(b)) \big)},
\end{align*}
where $\NF(a) \frown \NF(b)$ denotes the concatenation of $\NF(a)$ and $\NF(b)$. The exponent in the definition of $\rho(a,b)$ records the number of defining relations of $M$ that are required to put $ab$ into normal form, where $a$ and $b$ are assumed to be in normal form.

This is a $2$-cocycle: if $a,b,c \in M$, then the parity of the number of relations required to put $\NF(a) \NF(b) \NF(c)$ into normal form can be calculated by first putting $\NF(a) \NF(b)$ into normal form, then putting $\NF(ab) \NF(c)$ into normal form, or by first putting $\NF(b) \NF(c)$ into normal form, then putting $\NF(a) \NF(bc)$ into normal form. The first option determines $\kappa(a,b) + \kappa(ab,c)$, while the second option determines $\kappa(b,c) + \kappa(a,bc)$, and these must be the same, modulo $2$. This implies that $\rho(a,b)\rho(ab,c) = \rho(a,bc) \rho(b,c)$, so $\rho$ is a $2$-cocycle. 

It is easy to write down a presentation of the twisted structure algebra for the $2$-cocycle $\rho$. Indeed, we have
\begin{align*}
\Bbbk^\rho M(X,r) \cong \frac{\Bbbk \langle X \rangle}{\big\langle xy + uv \mid x,y,u,v \in X \text{ and } r(x,y) = (u,v) \big\rangle}.
\end{align*}
\end{example}

One of the main results of \cite{dehornoy} states that a cycle set (equivalently, a nondegenerate, involutive solution to the Yang--Baxter equation) give rise to a family of finite groups which are quotients of the structure monoid. These groups can be thought of as generalisations of Coxeter groups.

\begin{proposition}[{\cite[Proposition 5.2]{dehornoy}}] \label{prop:Coxeter}
Let $r : X^2 \to X^2$ be nondegenerate, involutive soution to the Yang--Baxter equation of Dehornoy class $d$. Then the quotient of $M(X,r)$ obtained by imposing the relations $\nu(x^d) = 1$ for all $x \in X$ is a group of order $d^{|X|}$. Moreover, the preimage of $1_G$ under the natural map $\pi : M(X,r) \to G$ is isomorphic to $\ZZ^{|X|}$.
\end{proposition}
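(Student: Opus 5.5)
The statement is Dehornoy's Proposition 5.2, so the plan is to reconstruct his argument using only the $I$-structure $\nu$ and the permutations $\psi_a$.

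\emph{Step 1: $\nu(x^d)$ is central and the subset $\nu(\NN^{dX})$ is a commutative submonoid.} By Lemma \ref{lem:PsiToProduct}, $\nu(ab)=\nu(a)\nu(\psi_a(b))$. Class $d$ means $\Omega_{d+1}(x,\dots,x,y)=y$, which I read as $\psi_{x^d}=\id_X$. Then for every $b$ we get $\nu(x^d b)=\nu(x^d)\nu(b)$. Comparing with $\nu(b x^d)=\nu(b)\nu(\psi_b(x)^d)$, and using that $b\mapsto \psi_b$ permutes the set $\{x^d\}$ (via $\psi_b(x^d)=\psi_b(x)^d$), we would like to conclude the following. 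The set $T=\{\nu(a^d)\mid a\in\NN^X\}$, where $a^d$ means raising each exponent to a multiple of $d$, is a submonoid isomorphic to $\NN^X$ via $\nu$, and each $m\in M$ satisfies $m\,T=T\,m$ with $m\,\nu(x^d)=\nu(\psi(x)^d)\,m$ for a suitable permutation $\psi$.

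The key identity is $\nu(a\,c)=\nu(a)\nu(c)$ whenever $c\in(\NN^X)^d$. This holds because $\psi_{c}=\id$: induct on the length of $c$ using $\psi_{ab}=\psi_{\psi_a(b)}\circ\psi_a$, together with the fact that $\psi_c(y^d)=\psi_c(y)^d$. Symmetrically, $\nu(c\,a)=\nu(c)\nu(a)$. Commutativity of $\NN^X$ then gives $\nu(c)\nu(a)=\nu(ca)=\nu(ac)=\nu(a)\nu(c)$. So in fact $T$ is central, and every element of $M$ commutes with $T$.

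\emph{Step 2: the quotient.} Since the $\nu(x^d)$ are central, the congruence generated by $\nu(x^d)=1$ identifies $m$ and $m'$ exactly when $mt=m't'$ for some $t,t'\in T$. Every $a\in\NN^X$ decomposes uniquely as $a=a_0c$ with $c\in(\NN^X)^d$ and all exponents of $a_0$ less than $d$. Hence $\nu(a)=\nu(a_0)\nu(c)$, and every class contains some $\nu(a_0)$; there are $d^{n}$ such representatives. For distinctness, suppose $\nu(a_0)t=\nu(b_0)t'$. Then $\nu(a_0c)=\nu(b_0c')$, so $a_0c=b_0c'$ by injectivity of $\nu$, and reducing exponents modulo $d$ gives $a_0=b_0$. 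Therefore $G$ has exactly $d^n$ elements.

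It is a group because $M$ is generated by $X$ and each $x$ has an inverse in $G$: indeed $\nu(x^d)=\nu(x)\nu(\psi_x(x^{d-1}))$, so $x$ is invertible. A finite monoid in which every element is invertible is a group.

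\emph{Step 3: the kernel.} The preimage of $1_G$ consists of the elements $\nu(c)$ with $c\in(\NN^X)^d$, that is, the set $T$. Given a cancellative embedding, we pass to the group of fractions. More precisely, the subgroup of the group of fractions of $M$ (Ore, by Garside theory) generated by $T$ is $\ZZ^n$. I read "preimage is isomorphic to $\ZZ^{|X|}$" in that sense; as a monoid it is $\NN^{n}$.

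The main obstacle is Step 1, specifically proving $\psi_{x^d}=\id$ from the definition of class via $\Omega$. This requires unwinding $\nu(x^d y)=\mu\Pi_{d+1}(x,\dots,x,y)$ and using the symmetry of $\Pi$ (Lemma \ref{lem:OmegaAndPiProp}) to place $y$ last. The last coordinate of $\Pi_{d+1}(x,\dots,x,y)$ is then exactly $\Omega_{d+1}(x,\dots,x,y)=y$, which gives $\psi_{x^d}(y)=y$.
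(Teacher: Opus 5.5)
\textbf{The gap is in Step 1.} Your ``key identity'' $\nu(ac)=\nu(a)\nu(c)$ for $c\in(\NN^X)^d$ is false, and so is the conclusion that $T$ is central.

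Lemma \ref{lem:PsiToProduct} gives $\nu(ac)=\nu(a)\,\nu(\psi_a(c))$. So $\psi_c=\id_X$ only controls products with $c$ on the \emph{left}: it yields $\nu(ca)=\nu(c)\nu(a)$, not $\nu(ac)=\nu(a)\nu(c)$. Combining the correct identities with commutativity of $\NN^X$ gives
\[ \nu(c)\,\nu(a)=\nu(ca)=\nu(ac)=\nu(a)\,\nu(\psi_a(c)). \]
Thus $T$ is only \emph{normalised} by $M$: conjugation by $\nu(a)$ permutes the generators $\nu(x^d)$ via $\psi_a$, exactly as your first paragraph said.

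A counterexample is the paper's example $M=\langle x,y\mid x^2=y^2\rangle$ with $d=2$. There $\nu(x^2)=xy$ and $x\cdot xy=x^2y=y^3=yx\cdot x=\nu(y^2)\,x$. But $xyx$ contains no subword $xx$ or $yy$, so it is the only word representing $xy\cdot x$. Hence $x\,\nu(x^2)\neq\nu(x^2)\,x$. More generally, the argument in the proof of Lemma \ref{lem:Centre} shows that centrality of the $\nu(x^d)$ forces $r(x,y)=(y,x)$.

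The false identity is also used twice in Step 2: in ``$\nu(a)=\nu(a_0)\nu(c)$'', and in passing from $\nu(a_0)t=\nu(b_0)t'$ to $\nu(a_0c)=\nu(b_0c')$. Your description of the generated congruence is justified only by centrality. You also misplaced the difficulty. The step you flagged, $\psi_{x^d}=\id_X$, is immediate: $\nu(x^dy)=\nu(x^d)\,\Omega_{d+1}(x,\dots,x,y)$ by definition of $\nu$, and left cancellation gives $\psi_{x^d}(y)=y$. The real content is the commutation rule above.

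\textbf{The argument survives once centrality is replaced by normality.} What you need is normality plus commutativity of $T$. Commutativity is correct: $\nu(c)\nu(c')=\nu(c\,\psi_c(c'))=\nu(cc')$ for $c,c'\in(\NN^X)^d$.

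Define $m\sim m'$ if $mt=m't'$ for some $t,t'\in T$. This relation is a congruence:
\begin{enumerate}[(i)]
\item it is transitive because $T$ is commutative: $mt_1t_2=m't_1't_2=m't_2t_1'=m''t_2't_1'$;
\item it is compatible with left multiplication trivially;
\item it is compatible with right multiplication because $\nu(c)\,\nu(a)=\nu(a)\,\nu(\psi_a(c))$.
\end{enumerate}
So $\sim$ is indeed the congruence generated by the relations $\nu(x^d)=1$.

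For representatives, use $\nu(a_0c)=\nu(a_0)\,\nu(\psi_{a_0}(c))$ with $\psi_{a_0}(c)\in(\NN^X)^d$. This is how the paper argues in its proof that $A$ is free of rank $(ck)^n$ over $B$. For distinctness, rewrite $\nu(a_0)\nu(c)=\nu(a_0\,\psi_{a_0}^{-1}(c))$. The correcting factor again lies in $(\NN^X)^d$, so reducing exponents modulo $d$ still gives $a_0=b_0$.

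In Step 3, reading the preimage in $M$ as $T\cong\NN^n$ is fine. If you want the $\ZZ^n$ statement in the structure group, you must also show that the kernel of $G(X,r)\to G$ is the subgroup generated by $T$, and not just its normal closure. This again follows from normality, since conjugation by each generator permutes the $\nu(y^d)$. You also need that the monoid quotient, being a group, coincides with the group given by the same presentation.
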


We shall refer to the groups obtained above as \emph{finite Dehornoy quotient groups}.


\section{Finite Dehornoy quotient groups are dual reflection groups}
For the remainder of this paper, let $X$ be a finite set of size $n$ and let $r : X^2 \to X^2$ be a solution to the Yang--Baxter equation, always assumed to be nondegenerate and involutive, and let $c$ be its minimal Dehornoy class. We will write $M = M(X,r)$ for the structure monoid associated to $r$ and $\alpha : M \times M \to \Bbbk^\times$ will denote a $2$-cocycle for $M$.

The algebras of particular interest to us are the semigroup algebras $\Bbbk M$ and their cocycle twists $\Bbbk^\alpha M$. These algebras are $\NN$-graded by declaring $\deg x = 1$ for all $x \in X$. By Theorem \ref{thm:Gateva}, these algebras are AS regular and enjoy a number of good ring-theoretic and homological properties. 

We now identify a family of groups which naturally coact on $\Bbbk^\alpha M$ and which are dual reflection groups for these algebras.

Let $G_k \coloneqq G(X,r,k)$ be the quotient of $M(X,r)$ with presentation
\begin{align*}
G_k \coloneqq G(X,r,k) 
&= \left\langle X \hspace{4pt} \bigg| \hspace{-1pt}
\begin{array}{cl}
xy = uv & \text{if } r(x,y)=(u,v) \\
\nu(x^c)^k=1 & \text{for all } x \in X
\end{array} \hspace{-1pt}
\right\rangle,
\end{align*}
where we recall the definition of $\nu : \NN^X \to M(X,r)$ from Proposition \ref{prop:IStructure}. We will also write $\pi_k : M(X,r) \to G_k$ for the natural projection. The following is a slight modification of Proposition \ref{prop:Coxeter}.

\begin{lemma}
Fix $k \geq 1$. Then $G_k$ is a group of order $(ck)^n$.
\end{lemma}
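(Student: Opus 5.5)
The plan is to reduce to Proposition \ref{prop:Coxeter} with $d = ck$. First I will check that $r$ is of Dehornoy class $ck$. For this, write $x^{[m]}$ for the tuple $(x,\dots,x)$ of length $m$, and consider the map $f_x : y \mapsto \Omega_{m+1}(x^{[m]},y)$. From the recursive definition of $\Omega$,
\begin{align*}
\Omega_{m+1}(x^{[m]},y) = \Omega_m(x^{[m]}) * \Omega_m(x^{[m-1]},y).
\end{align*}
I also need $\Omega_m(x,\dots,x) = x$ for all $m$. This holds inductively provided $x*x = x$. That is not automatic, since $x * x = \lambda_x^{-1}(x)$. So in general I will instead note that $\Omega_{m+1}(x^{[m]},y) = L_{z_m} \circ \cdots \circ L_{z_1}(y)$, where $z_i = \Omega_i(x^{[i]})$. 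Then class $c$ says this composite with $m=c$ is the identity on $X$.

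The key claim is that class $c$ implies class $jc$ for every $j \geq 1$. I would prove it as follows. The statement ``$\Omega_{c+1}(x^{[c]},y)=y$ for all $y$'' gives, by feeding it into the recursion, $\Omega_{c+1}(x^{[c+1]}) = x$, and then the sequence $z_i$ is periodic with period $c$. Hence the composite for $m = jc$ is the $j$-th power of the composite for $m=c$, which is the identity.

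Alternatively, the paper already records, after Definition \ref{defn:DehornoyClass}, that the proof of \cite[Lemma 5.4]{dehornoy} shows the set of classes is exactly the set of multiples of $c$. I would cite that directly, so that $r$ has class $ck$.

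Next I must identify the relations. Proposition \ref{prop:Coxeter} with $d=ck$ gives the quotient of $M$ by $\nu(x^{ck}) = 1$ for all $x\in X$, a group of order $(ck)^n$. In $G_k$ we impose $\nu(x^c)^k = 1$ instead. So it suffices to show $\nu(x^{ck}) = \nu(x^c)^k$ in $M$. By Lemma \ref{lem:PsiToProduct},
\begin{align*}
\nu(x^{c}x^{c(k-1)}) = \nu(x^c)\,\nu\big(\psi_{x^c}(x^{c(k-1)})\big),
\end{align*}
so it is enough that $\psi_{x^c}(x) = x$, and then induct on $k$.

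Now $\psi_{x^c}(x)$ is the $(c+1)$-st letter of $\Pi_{c+1}(x^{[c]},x)$, namely $\Omega_{c+1}(x^{[c]},x)$. By the class-$c$ hypothesis this equals $x$. Hence $\nu(x^{ck}) = \nu(x^c)^k$, the two presentations coincide, and $|G_k| = (ck)^n$.

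The main obstacle is the first step, that class $c$ implies class $ck$, should one not want to rely on the remark about \cite{dehornoy}. The subtle point is that $\Omega_m(x,\dots,x)$ need not be constant. I would handle it via the periodicity argument above, using Lemma \ref{lem:OmegaAndPiProp} to reorder arguments where needed. Equivalently, I could use Lemma \ref{lem:PsiToProduct} to write $\psi_{x^{ck}} = \psi_{x^c}^{\,k}$, which follows from $\psi_{x^c}(x)=x$. Since class $d$ means precisely $\psi_{x^d} = \id_X$ for all $x$, this gives the claim immediately.
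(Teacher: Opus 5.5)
Your proposal is correct and follows the paper's proof: you reduce to Proposition \ref{prop:Coxeter} with $d = ck$ and then show $\nu(x^{ck}) = \nu(x^c)^k$ using $\psi_{x^c} = \id_X$. The paper takes that identity from \cite[Lemma 5.5]{dehornoy}, while you derive it from Lemma \ref{lem:PsiToProduct} and the class-$c$ condition. One small correction: the remark after Definition \ref{defn:DehornoyClass} only records that every class is a multiple of $c$, which is the converse of what you need. Rely instead on your own argument $\psi_{x^{ck}} = \psi_{x^c}^{\,k} = \id_X$ (or your periodicity argument), which validly supplies the class-$ck$ step that the paper simply asserts.
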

\begin{proof}
Since $r$ is of minimal Dehornoy class $c$, it is of class $d = ck$ for any $k \geqslant 1$. By Proposition \ref{prop:Coxeter}, the group with presentation
\begin{align*}
\left\langle X \hspace{4pt} \bigg| \hspace{-1pt}
\begin{array}{cl}
xy = uv & \text{if } r(x,y)=(u,v) \\
\nu(x^{ck})=1 & \text{for all } x \in X
\end{array} \hspace{-1pt}
\right\rangle,
\end{align*}
has order $(ck)^n$. The only thing left to show is that $\nu(x^{ck}) = \nu(x^{c})^k$, but this follows from \cite[Lemma 5.5]{dehornoy} since
\begin{gather*}
\nu(x^{ck}) = \nu(x^c \cdot x^{c(k-1)} = \nu(x^c) \nu(x^{c(k-1)}) = \dots = \nu(x^c)^k. \qedhere
\end{gather*}
\end{proof}

In addition to its $\NN$-grading, the algebra $\Bbbk^\alpha M$ also possesses a $G_k$-grading by declaring $\deg_{G_k} x = \pi_k(x)$ for all $x \in X$, where the element $x$ on the left hand side lives inside $\Bbbk^\alpha M$, while on the right hand side it lives in $G_k$. Again, the majority of the hard work is completed in \cite{dehornoy}.

\begin{theorem} \label{thm:DualReflection}
Fix $k \geqslant 1$ and write $A \coloneqq \Bbbk^\alpha M$ and $G_k \coloneqq G(X,r,k)$, where $\alpha : M \times M \to \Bbbk$ is a normalised $2$-cocycle. If $x \in X$, write $a_x = \nu(x^c)$. Then the subring of dual invariants $A_{1_{G_k}}$ is
\begin{align*}
A_{1_{G_k}} = \Bbbk \langle\, a_x^k \mid x \in X \,\rangle
\end{align*}
which is isomorphic to a quantum polynomial ring in $n$ variables. That is, $G_k$ is a dual reflection group for $A$. Moreover, if $\alpha$ is trivial or, more generally, the pullback along the projection $\pi_k$ of some group $2$-cocycle $\overline{\alpha} : G_k \times G_k \to \Bbbk^\times$, in the sense that $\alpha = \overline{\alpha} \circ (\pi_k \times \pi_k)$, then $A_{1_{G_k}}$ is a polynomial ring in $n$ variables.
\end{theorem}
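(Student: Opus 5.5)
The plan is to identify the fibre $\pi_k^{-1}(1_{G_k})$ inside $M$, since $A_{1_{G_k}}$ is the span of the monoid elements in that fibre. The twisted product differs from the monoid product only by nonzero scalars, so it does not change which basis elements appear. By Proposition \ref{prop:Coxeter}, applied with $d = ck$, the preimage of $1$ in $M$ is a free abelian monoid of rank $n$. I will show concretely that it is $\{\nu((\prod_x x^{ck})^{\,\cdot})\}$, that is, the image under $\nu$ of the submonoid $N_k = \{\prod_x x^{ck m_x}\} \subseteq \NN^X$. I also want $\nu(x^{ck}) = a_x^k$.

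The first step is to show that $\nu$ restricted to $N_1 = \{\prod x^{c m_x}\}$ is a monoid homomorphism. For this I use Lemma \ref{lem:PsiToProduct}: $\nu(ab) = \nu(a)\nu(\psi_a(b))$. The key input is that $\psi_{x^c} = \id$ for every $x$, which is exactly the Dehornoy class condition $\Omega_{c+1}(x,\dots,x,y) = y$. Indeed, $\nu(x^c y) = \nu(x^c)\Omega_{c+1}(x,\dots,x,y)$ by Proposition \ref{prop:IStructure}. Lemma \ref{lem:PsiToProduct} then shows $\psi_a = \id$ for all $a \in N_1$. Consequently $\nu(ab) = \nu(a)\nu(b)$ for $a \in N_1$, and by symmetry of $\NN^X$ we get $\nu(a)\nu(b) = \nu(b)\nu(a)$ for $a,b \in N_1$. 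Thus the $a_x$ pairwise commute in $M$, and $\nu|_{N_1}$ is an injective homomorphism, since $\nu$ is bijective.

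The second step is the identification of the fibre. Each $a_x^k = \nu(x^{ck})$ maps to $1$ in $G_k$, so $\nu(N_k) \subseteq \pi_k^{-1}(1)$. For the reverse inclusion I would use a counting/Hilbert-series argument. The monoid $M$ decomposes via $\nu$ as $\nu(N_k)\cdot\nu(R)$, where $R = \{\prod x^{e_x} : 0 \le e_x < ck\}$, with unique factorisation $\nu(ab) = \nu(a)\nu(b)$ for $a \in N_k$. This is because $\psi_a = \id$ for $a \in N_k$. Proposition \ref{prop:Coxeter} says that $|G_k| = (ck)^n = |R|$ and that the fibre over $1$ is $\ZZ^n$-like. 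I would argue that $\pi_k \circ \nu$ restricted to $R$ is a bijection onto $G_k$: it is surjective since every element of $M$ lies in $\nu(N_k)\nu(R)$ and $\pi_k(\nu(N_k)) = 1$, and then the cardinalities agree. Hence $\nu(a)\nu(s)$ lies in the fibre over $1$ exactly when $s = 1$. I expect this to be the main obstacle. If surjectivity from $R$ requires more care, I would instead fall back on the explicit description of the kernel in Dehornoy's proof of Proposition \ref{prop:Coxeter}.

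The final step treats the algebra structure. $A_{1_{G_k}}$ has basis $\{\nu(a) : a \in N_k\}$, and it is generated by the $a_x^k$, which commute in $M$. In $\Bbbk^\alpha M$ they therefore satisfy $a_x^k \star a_y^k = q_{xy}\, a_y^k \star a_x^k$ with $q_{xy} = \alpha(a_x^k,a_y^k)/\alpha(a_y^k,a_x^k)$. The ordered monomials are distinct monoid elements, hence linearly independent, so the algebra is the quantum polynomial ring $\Bbbk_{q}[a_x^k]$. If $\alpha = \overline{\alpha}\circ(\pi_k\times\pi_k)$, then every value of $\alpha$ on pairs from the fibre equals $\overline{\alpha}(1,1) = 1$, because $\overline{\alpha}$ is normalised. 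So $\star$ coincides with the monoid product there and the ring is commutative polynomial.
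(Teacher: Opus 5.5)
Your proof is correct. Its final step, the $q$-commutation relations and $\overline{\alpha}(1,1)=1$ in the pulled-back case, is the same as the paper's, but you identify the fibre $\pi_k^{-1}(1_{G_k})$ by a different route.

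The paper simply cites Proposition~\ref{prop:Coxeter}, i.e.\ Dehornoy's analysis, for the fact that the fibre is free abelian of rank $n$ on $\{\nu(x^{ck}) \mid x \in X\}$. You use only the order $|G_k| = (ck)^n$. The class condition gives $\psi_{x^c} = \id_X$, hence $\psi_a = \id_X$ on $N_1$, hence $\nu(as) = \nu(a)\nu(s)$ for $a \in N_k$ and $s$ in your residue set $R$. Surjectivity of $\pi_k$ together with $|R| = |G_k|$ then makes $\pi_k \circ \nu|_R$ a bijection, which pins the fibre down as $\nu(N_k)$.

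The step you flagged as the main obstacle is fine: $\pi_k$ is surjective because $G_k$ is a quotient of $M$. In any case, a finite group generated by the images of $X$ is generated by them as a monoid.

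Your route buys self-containment. The stated Proposition~\ref{prop:Coxeter} does not specify the generators of the fibre, and it describes the fibre as $\ZZ^{|X|}$, whereas inside $M$ it is a copy of $\NN^n$. The paper also leaves implicit that the $a_x^k$ commute in $M$ and that ordered monomials in them are linearly independent; you prove both. Your decomposition is essentially the free-module statement the paper proves separately in part (1) of the subsequent PI proposition. The paper's route buys brevity by outsourcing all of this to Dehornoy.

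A minor point: you need not assume $\overline{\alpha}$ is normalised, since $\overline{\alpha}(1,1) = \alpha(1,1) = 1$ follows from $\alpha$ being normalised.
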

\begin{proof}
A $\Bbbk$-basis for the subring of dual invariants is given by the preimage of $1_{G_k}$ under the map $\pi_k : M \to G_k$. By Proposition \ref{prop:Coxeter}, this preimage is a free abelian group of rank $n$ with generating set $\{ \nu( x^{ck}) \mid x \in X \} = \{ a_x^k \mid x \in X \}$. Moreover, if $x,y \in X$ then in $\Bbbk^\alpha M$ we have
\begin{align*}
a_x^k * a_y^k = \alpha(a_x^k, a_y^k) a_x^k a_y^k = \alpha(a_x^k, a_y^k) a_y^k a_x^k = \frac{\alpha(a_x^k, a_y^k)}{\alpha(a_y^k, a_x^k)} a_y^k * a_x^k,
\end{align*}
so that $A_{1_{G_k}}$ has presentation
\begin{align*}
A_{1_{G_k}} \cong \frac{\Bbbk \langle a_x^k \mid x \in X\rangle}{\Big\langle a_x^k a_y^k = \frac{\alpha(a_x^k, a_y^k)}{\alpha(a_y^k, a_x^k)} a_y^k a_x^k \;\Big|\; x,y \in X \Big\rangle},
\end{align*}
which is a quantum polynomial ring in $n$ variables.

Finally, if $\alpha = \overline{\alpha} \circ (\pi_k \times \pi_k)$ for some group $2$-cocycle $\overline{\alpha} : G_k \times G_k \to \Bbbk^\times$, then
\begin{align*}
\alpha(a_x^k, a_y^k) = \overline{\alpha}\big( \pi_k(a_x^k), \pi_k(a_y^k)\big) = \overline{\alpha}(1,1) = 1,
\end{align*}
and hence $A_{1_{G_k}}$ is a commutative polynomial ring.
\end{proof}

Using the above result, we are able to determine precisely when $\Bbbk^\alpha M$ is PI. This is a refinement of \cite[Theorem 1.4 (6)]{gateva}.

\begin{proposition}
Fix $k \geqslant 1$, let $A = \Bbbk^\alpha M$, and $B = A_{1_{G_k}} = \Bbbk\langle\,b_x \mid x \in X\,\rangle$, where $b_x = \nu(x^c)^k$.
\begin{enumerate}[{\normalfont (1)},topsep=1pt,itemsep=1pt,wide=0pt]
\item $A$ is a finite module over $B$ of rank $(ck)^n$.
\item Write $q_{xy} = \frac{\alpha(b_x,b_y)}{\alpha(b_y,b_x)}$. Then $A$ is PI if and only if $q_{xy}$ is a root of unity for all $x,y \in X$.
\end{enumerate}
\end{proposition}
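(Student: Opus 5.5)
For (1), the plan is to decompose $A$ along the $G_k$-grading. Write $A = \bigoplus_{g \in G_k} A_g$, where $A_g$ is spanned by the elements of $\pi_k^{-1}(g) \subseteq M$. Each $A_g$ is a $B$-module, since $B = A_{1_{G_k}}$ and $A_{1}A_g \subseteq A_g$; by symmetry it is also a left $B$-module. The key input is Proposition~\ref{prop:Coxeter}: $\pi_k^{-1}(1)$ is the free abelian group on the $b_x$. I claim each fibre $\pi_k^{-1}(g)$ is a single free orbit under the action of $\pi_k^{-1}(1)$ by right multiplication. To see this, pick $m_g$ of minimal length in $\pi_k^{-1}(g)$. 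For any $m \in \pi_k^{-1}(g)$, write $m$ and $m_g$ via the $I$-structure, $m = \nu(a)$ and $m_g = \nu(a_g)$. I would then use Lemma~\ref{lem:PsiToProduct} together with the fact that $\nu(x^{ck})$ is central up to the relevant twist. Dehornoy's proof of Proposition~\ref{prop:Coxeter} shows $G_k$ is in bijection with $\{a \in \NN^X : \text{each exponent} < ck\}$. Hence every $m$ equals $\nu(a_g)\prod_x b_x^{e_x}$ for a unique choice of the $e_x$.

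Cancellativity of $M$ then gives freeness, so $A_g = m_g \star B$ is free of rank one over $B$, using the twisted product and the fact that $\alpha$ is a unit. Summing over $g$ gives $A \cong B^{(ck)^n}$, which is (1). An alternative route that avoids orbit bookkeeping is to compare Hilbert series. By Theorem~\ref{thm:DualReflection}, $B$ is a polynomial-type algebra on generators of degree $ck$, so $\hilb_B = 1/(1-t^{ck})^n$. Then $\hilb_A/\hilb_B = \bigl((1-t^{ck})/(1-t)\bigr)^n$, which is a polynomial with value $(ck)^n$ at $t=1$. Combined with $A$ being generated as a $B$-module by the images of $\nu(a)$ with exponents $< ck$, this forces freeness of rank $(ck)^n$. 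The main obstacle in (1) is the generation statement. I would prove it via $\nu(a\, x^{ck}) = \nu(a)\nu(\psi_a(x)^{ck})$ from Lemma~\ref{lem:PsiToProduct}, and the identity $\nu(y^{ck}) = b_y$.

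For (2), recall that $B$ is the quantum polynomial ring with parameters $q_{xy}$, by Theorem~\ref{thm:DualReflection}. Suppose first that all $q_{xy}$ are roots of unity. Then $B$ is PI, since it is finite over its central subalgebra generated by suitable powers $b_x^N$, so it is a finite module over a commutative ring. By (1), $A$ is a finite right module over the PI ring $B$. Moreover $A$ is a finite module over a central subalgebra. To see this, I would use that the $b_x^N$ are central in $A$, and not merely in $B$. This is the delicate point. In $M$ the elements $\nu(x^{ck})$ need not be central; they are permuted by conjugation, since $y\,b_x = b_{\psi(x)}\,y$ for an appropriate permutation coming from $\psi$. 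Taking $N$ divisible by the order of $H$ and by the orders of the relevant roots of unity and cocycle ratios, I would instead use symmetric products such as $\prod_x b_x^N$, or the elementary symmetric functions in the $b_x^N$. I would show these are central in $A$ and that $B$, and hence $A$, is finite over them. Alternatively, and more simply, $A$ embeds in $\operatorname{End}_B(A_B) \cong M_{(ck)^n}(B)$ via left multiplication. Matrix rings over a PI ring are PI, and subrings of PI rings are PI, so $A$ is PI. I will take this cleaner route.

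Conversely, if $A$ is PI then its subring $B$ is PI. A quantum polynomial ring $\Bbbk_{\mathbf q}[b_x]$ is PI iff all $q_{xy}$ are roots of unity. Indeed, if some $q_{xy}$ is not a root of unity, then the subalgebra $\Bbbk\langle b_x, b_y\rangle$ is the quantum plane with non-root-of-unity parameter. That quantum plane is not PI: its localisation contains the quantum torus, which is simple with centre $\Bbbk$ and infinite-dimensional, contradicting Kaplansky's theorem. The step I expect to need the most care is justifying the embedding into $\operatorname{End}_B(A)$ with $A$ free as a one-sided $B$-module, which is exactly (1).
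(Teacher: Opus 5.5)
Your proposal is correct and follows essentially the same route as the paper: $A$ is generated over $B$ by the $\nu(r)$ with all exponents below $ck$, via $\nu(rq^{ck})=\nu(r)\,\nu(\psi_r(q^{ck}))$; PI-ness follows from a finite free extension; and the non-PI quantum plane $\Bbbk\langle b_x,b_y\rangle$ gives the converse. The differences are minor: you get directness from the $G_k$-fibres (or a Hilbert series count) rather than from uniqueness of $p=rq^{ck}$ and injectivity of $\nu$, and you embed $A$ in $M_{(ck)^n}(B)$, whereas the paper uses freeness over the commutative subring $\Bbbk[b_x^N]$, which lands directly in matrices over a commutative ring and so avoids the theorem that matrix rings over PI rings are PI.
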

\begin{proof}
(1) Write $d = ck$ and let 
\begin{align*}
\mathcal{R} \coloneqq \Big\{ \prod_{x \in X} x^{r_x} \;\Big|\; 0 \leqslant r_x < d \text{ for all } x \in X \Big\},
\end{align*}
which is a set of size $d^n$. Now, if $p \in \NN^X$, then there exist unique $r \in \mathcal{R}$ and $q \in \NN^X$ with $p = rq^d$. Since $q^d \in \NN^X$, we have $ b = \psi_x(q^d) \in B$, so that, in $M$,
\begin{align*}
\nu(p) = \nu(r q^d) = \nu(r) \nu(\psi_r(q^d)) = \nu(r) b.
\end{align*}
It follows that $A = \sum_{r \in \mathcal{R}} \nu(r)B$. In fact, this sum is direct since the residue $r$ is unique and $\nu$ is bijective, so $A = \bigoplus_{r \in \mathcal{R}} \nu(r)B$ is free of rank $|\mathcal{R}| = (ck)^n$.

(2) By part (1), $A$ is a finite module over the quantum polynomial ring
\begin{align*}
B = \frac{\Bbbk \langle\, b_x \mid x \in X\,\rangle}{\big\langle b_x b_y = q_{xy} b_y b_x \;\big|\; x,y \in X \big\rangle}.
\end{align*}
Assume first that $q_{xy}$ is a root of unity for all $x,y \in X$, and let $N \geqslant 1$ be such that $q_{xy}^N = 1$ for all $x,y \in X$. Let $C = \Bbbk[\, b_x^N \mid x \in X\,]$, which is a commutative polynomial ring, and $B$ is free of rank $N^n$ over $C$. Therefore $A$ is a finite (free) module over the commutative ring $C$, hence PI. 

Conversely, assume that there exist $x,y \in X$ such that $q_{xy}$ is not a root of unity. Then the subalgebra of $A$ generated by $b_x$ and $b_y$ is a quantum plane:
\begin{align*}
\Bbbk \langle\,b_x,b_y \,\rangle = \Bbbk\langle u,v\rangle/\langle vu-q_{xy}uv \rangle.
\end{align*}
Since $q_{xy}$ is not a root of unity, it is well known that this algebra is not PI. Therefore, since $A$ has a non-PI subalgebra, it cannot be PI.
\end{proof}

\subsection{Some examples} 
Before proving further results about these algebras, it is prudent to give some examples.

\begin{example}
Let $X = \{x,y\}$ and $r : X^2 \to X^2$ be the solution to the Yang--Baxter equation given by
\begin{align*}
\begin{array}{cc}
r(x,x) = (y,y), & r(x,y) = (x,y). \\
r(y,x) = (y,x), & r(y,y) = (x,x),
\end{array}
\end{align*}
Then $M = M(X,r) = \langle x,y \mid x^2=y^2 \rangle$ and the associated semigroup algebra has presentation $A = \Bbbk\langle x,y \rangle/\langle x^2-y^2 \rangle$. We note that this algebra has no $2$-cocycle twists, so we work only with $A$ in this example.

Let us determine $G_k$ for $k \geqslant 1$. In this example, the corresponding cycle set $(X,*)$ satisfies
\begin{align*}
\begin{array}{cc}
x*x = y, & x*y = x. \\
y*x = y, & y*y = x.
\end{array}
\end{align*}
To work out the minimal Dehornoy class $c$, we calculate as follows:
\begin{gather*}
\Omega_3(x,x,x) = \Omega_2(x,x) * \Omega_2(x,x) = (x*x)*(x*x) = y*y = x, \\
\Omega_3(x,x,y) = \Omega_2(x,x) * \Omega_2(x,y) = (x*x)*(x*y) = y*x = y.
\end{gather*}
Similarly, $\Omega_3(y,y,y) = y$ and $\Omega_3(y,y,x) = x$, and the above calculations showed that, for example, $\Omega_2(x,y) \neq y$, so $c=2$.

To obtain a presentation for $G_k = G(X,r,k)$, we therefore need to adjoin the relations $\nu(x^2)^k = 1 = \nu(y^2)^k$. We have
\begin{align*}
\nu(x^2) = \Omega_1(x) \Omega_2(x,x) = x(x*x) = xy, \quad \nu(y^2) = \Omega_1(y) \Omega_2(y,y) = y(y*y) = yx,
\end{align*}
so that $G_k$ is the group of order $4k^2$ with presentation
\begin{align*}
G_k = \langle x,y \mid x^2=y^2, (xy)^k = (yx)^k = 1 \rangle.
\end{align*}
By Theorem \ref{thm:DualReflection}, the subring of dual invariants is $\Bbbk\langle (xy)^k, (yx)^k \rangle$, which is a polynomial ring, hence AS regular. This agrees with \cite[Theorem 5.5]{coactions}.
\end{example}

\begin{example} \label{ex:BuzagloExample}
In \cite[Theorem 6.3]{buzaglo}, the authors show that the groups
\begin{align*}
\Delta_n = \langle a,b,c \mid a^2=bc, \; b^2=ca, \; c^2=ab, \; (acb)^n=1 \rangle 
\end{align*}
are dual reflection groups for the Sklyanin algebras
\begin{align*}
S_{q,0,1} \coloneqq
\frac{\Bbbk \langle x,y,z \rangle}{\langle \, x^2+qyz, \; y^2+qzx, \; z^2+qxy \,\rangle},
\end{align*}
where $q \in \Bbbk^\times$. We show that this result falls under the umbrella of Theorem \ref{thm:DualReflection}.

Let $X = \{ x,y,z \}$. We can give $X$ the structure of a cycle set by declaring that $L_a = (x\;z\;y)$ for all $a \in X$, from which it is immediate that condition \eqref{eqn:CycleSetIdentity2} holds. Using \eqref{eqn:StructureMonoidAlt}, we find that the relations in the structure monoid $M$ associated to the corresponding solution $r$ to the Yang--Baxter equation are
\begin{align*}
x^2 = x(x*y) = y(y*x) = yz, \quad y^2 = y(y*z) = z(z*y) = zx, \quad z^2 = z(z*x) = x(x*z) = xy,
\end{align*}
so that
\begin{align*}
M = M(X,r) = \langle \, x,y,z \mid x^2=yz, \; y^2=xz, \; z^2=xy \, \rangle_{\text{monoid}}.
\end{align*}
It follows that $\Bbbk M = S_{-1,0,1}$.

We claim that there exists a $2$-cocycle $\alpha : M \times M \to \Bbbk$ so that $\Bbbk^\alpha M = S_{q,0,1}$. If $a \in M$, write $a_x, a_y, a_z$ for the exponents of $x,y,z$, respectively, in $\nu^{-1}(a) \in \NN^X$. Now define a bilinear form
\begin{align*}
B : \ZZ^3 \times \ZZ^3 \to \ZZ, \quad B(u,v) = u_x v_y + u_y v_z + u_z v_x.
\end{align*}
It is straightforward to verify that $B$ is invariant under $\sigma = (x\;z\;y)$, in the sense that $B(\sigma(u),\sigma(v))=B(u,v)$ for all $u,v \in \ZZ^3$. We will abuse notation and view $B$ as a map $\NN^X \times \NN^X \to \ZZ$, which is bilinear in the sense that $B(ab,c) = B(a,c) + B(b,c)$, for example.

Now define
\begin{align*}
\alpha(a,b) = (-q)^{B(\nu^{-1}(a), \sigma^{-\ell(a)} \nu^{-1}(b))}.
\end{align*}
We claim that $\alpha$ is the desired $2$-cocycle. We first note that, using Lemma \ref{lem:PsiToProduct} and noting that $\psi_a = (x\;z\;y) = \sigma$ for all $a \in X$, we have
\begin{align*}
\nu^{-1}(ab) = \nu^{-1}(a) \cdot \sigma^{-\ell(a)} \nu^{-1}(b).
\end{align*}
Using this we find that, if $a,b,c \in M$, then the exponent in $\alpha(a,b)\alpha(ab,c)$ is
\begin{align*}
B(\nu^{-1}&(a), \sigma^{-\ell(a)} \nu^{-1}(b)) + B(\nu^{-1}(ab), \sigma^{-\ell(ab)} \nu^{-1}(c)) \\
&= B(\nu^{-1}(a), \sigma^{-\ell(a)} \nu^{-1}(b)) + B(\nu^{-1}(a) \cdot \sigma^{-\ell(a)} \nu^{-1}(b), \sigma^{-\ell(ab)} \nu^{-1}(c)) \\
&= B(\nu^{-1}(a), \sigma^{-\ell(a)} \nu^{-1}(b)) + B(\nu^{-1}(a), \sigma^{-\ell(ab)} \nu^{-1}(c)) + B(\sigma^{-\ell(a)} \nu^{-1}(b), \sigma^{-\ell(ab)} \nu^{-1}(c)) \\
&= B(\nu^{-1}(a), \sigma^{-\ell(a)} \nu^{-1}(b)) + B(\nu^{-1}(a), \sigma^{-\ell(ab)} \nu^{-1}(c)) + B(\nu^{-1}(b), \sigma^{-\ell(b)} \nu^{-1}(c)).
\end{align*}
On the other hand, the exponent in $\alpha(a,bc)\alpha(b,c)$ is
\begin{align*}
B(\nu^{-1}&(a), \sigma^{-\ell(a)} \nu^{-1}(bc)) + B(\nu^{-1}(b), \sigma^{-\ell(b)} \nu^{-1}(c)) \\
&= B(\nu^{-1}(a), \sigma^{-\ell(a)} (\nu^{-1}(b) \cdot \sigma^{-\ell(b)} \nu^{-1}(c))) + B(\nu^{-1}(b), \sigma^{-\ell(b)} \nu^{-1}(c)) \\
&= B(\nu^{-1}(a), \sigma^{-\ell(a)} \nu^{-1}(b)) + B(\nu^{-1}(a), \sigma^{-\ell(a)-\ell(b)} \nu^{-1}(c)) + B(\nu^{-1}(b), \sigma^{-\ell(b)} \nu^{-1}(c)) \\
&= B(\nu^{-1}(a), \sigma^{-\ell(a)} \nu^{-1}(b)) + B(\nu^{-1}(a), \sigma^{-\ell(ab)} \nu^{-1}(c)) + B(\nu^{-1}(b), \sigma^{-\ell(b)} \nu^{-1}(c))
\end{align*}
Therefore $\alpha(a,b)\alpha(ab,c) = \alpha(a,bc)\alpha(b,c)$, as desired.

The relations in $\Bbbk^\alpha M$ follow quickly: for example, we have
\begin{align*}
x^2 = \frac{\alpha(x,x)}{\alpha(y,z)} yz = (-q)^{B(x,\sigma^{-1}x) - B(y,\sigma^{-1}z)} yz = (-q)^{B(x,y) - B(y,x)} yz = (-q)^{1-0} yz = -q yz,
\end{align*}
and, similarly, $y^2 = -qzx$ and $z^2 = -qxy$, so that $\Bbbk^\alpha M = S_{q,0,1}$.

We now determine the Dehornoy quotient groups $G_n$. One can show that we have an identity
\begin{align*}
\Omega_{i+1}(x_1, \dots, x_{i+1}) = (x\;z\;y)^i(x_{i+1}),
\end{align*}
so that $r : X^2 \to X^2$ has minimal Dehornoy class $c=3$. Using this, we also find that
\begin{align*}
\nu(x^3) = \Omega_1(x) \Omega_2(x,x) \Omega_3(x,x,x) = xzy
\end{align*}
and hence $G_n$ is a group of order $(3n)^3 = 27n^3$ with presentation
\begin{align*}
G_n = \langle x,y,z \mid x^2=yz, \; y^2=zx, \; z^2=xy, \; (xzy)^n=1, \; (zyx)^n=1, \; (yxz)^n=1 \rangle.
\end{align*}
The last two relations are redundant, so $G_n \cong \Delta_n$.

By Theorem \ref{thm:DualReflection}, the subring of dual invariants with respect to $G_n$ is
\begin{align*}
\Bbbk \langle (xzy)^n, (yxz)^n, (zyx)^n \rangle
\end{align*}
and this is a quantum polynomial ring in three variables. If $\alpha$ descends to a $2$-cocycle on $G_n$, then the subring of dual invariants is commutative, again by Theorem \ref{thm:DualReflection}. Since 
\begin{align*}
1 = \alpha(1,y) = \alpha((xyz)^n, y) = (-q)^{B(x^{3n},y)} = (-q)^{3n},
\end{align*}
a necessary condition for $\alpha$ to descend to a $2$-cocycle on $G_n$ is that $(-q)^{3n} = 1$. One can show that this condition is also sufficient. 
\end{example}


\section{The centre of the structure algebra}
In this section, we prove a result which allows one to compute the centre of a structure algebra $A = \Bbbk M$ using commutative invariant theory. This result is useful in practice, as we shall demonstrate on the algebra from Example \ref{ex:BuzagloExample}.

\begin{theorem} \label{thm:Centre}
Write $H = \langle \psi_x \mid x \in X \rangle \leqslant \Sym(X)$ and $K = \{ a \in \NN^X \mid \psi_a = \id_X \}$. Also write $A = \Bbbk M$ and $R = \Bbbk[K] \subseteq \Bbbk [X]$. 
\begin{enumerate}[{\normalfont (1)},topsep=1pt,itemsep=1pt,leftmargin=35pt]
\item $K$ is a submonoid of $\NN^X$ and $R$ is a subring of $\Bbbk [X]$.
\item $K$, and hence $R$, are closed under the action of $H$.
\item Extend the map $\nu : \NN^X \to M$ $\Bbbk$-linearly to a map $\Bbbk[X] \to A$, which we also call $\nu$. Then $\nu$ restricts to an algebra homomorphism $R \to A$ and $\nu(R)$ is a commutative subalgebra of $A$. 
\item $\nu$ restricts to an isomorphism
\begin{align*}
R^H \xrightarrow{\cong} Z(A).
\end{align*}
\end{enumerate}
\end{theorem}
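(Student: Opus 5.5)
The whole argument runs through Lemma \ref{lem:PsiToProduct}: $\nu(ab)=\nu(a)\nu(\psi_a(b))$ and $\psi_{ab}=\psi_{\psi_a(b)}\circ\psi_a$. For (1), note $\psi_1=\id_X$, since $\nu(x)=\nu(1)\psi_1(x)=x$. If $a,b\in K$, then $\psi_a(b)=b$, so $\psi_{ab}=\psi_b\circ\psi_a=\id_X$. Thus $K$ is a submonoid, and its monoid algebra $R=\Bbbk[K]$ is a subring of $\Bbbk[X]$.

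For (2), it suffices to show $\psi_x(a)\in K$ for $a\in K$ and $x\in X$; closure under inverses then follows because $H$ is finite. Apply the lemma to the commuting product $xa=ax$ in $\NN^X$:
\[
\psi_{\psi_x(a)}\circ\psi_x=\psi_{xa}=\psi_{ax}=\psi_{\psi_a(x)}\circ\psi_a=\psi_x .
\]
Cancelling $\psi_x$ gives $\psi_{\psi_x(a)}=\id_X$, as required.

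For (3), take $a,b\in K$. Then $\nu(a)\nu(b)=\nu(a)\nu(\psi_a(b))=\nu(ab)$, so $\nu|_R$ is multiplicative. Since $ab=ba$ in $\NN^X$, its image is commutative.

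For (4), the plan is to characterise central monomials first and then handle general elements. Two preliminary facts are needed.
\begin{enumerate}[(i),topsep=1pt,itemsep=1pt]
\item Since $A=\Bbbk M$ is graded by the monoid $M$, and $M$ is cancellative (so $m\mapsto xm$ and $m\mapsto mx$ are injective), an element $f=\sum_m c_m m$ is central if and only if $xf=fx$ for all $x\in X$.
\item For $a\in\NN^X$ and $x\in X$, the lemma gives $x\nu(a)=\nu(x)\nu(\psi_x^{-1}(a))$ rewritten suitably, namely
\[
\nu(x\,\psi_x^{-1}(a))=x\,\nu(\psi_x^{-1}(a)) .
\]
Hence left multiplication by $x$ sends $\nu(b)$ to $\nu(x\psi_x^{-1}(b))$. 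Likewise $\nu(a)y=\nu(a\psi_a^{-1}(y))$.
\end{enumerate}

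Now write $f=\nu(p)$ with $p=\sum_a c_a a\in\Bbbk[X]$. Then
\[
xf=\sum_a c_a\,\nu\bigl(x\psi_x^{-1}(a)\bigr),\qquad fx=\sum_a c_a\,\nu\bigl(a\psi_a^{-1}(x)\bigr).
\]

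For the forward inclusion, assume $p\in R^H$. Then each $a$ in the support lies in $K$, so $fx=\sum_a c_a\nu(ax)$. Because $p$ is $H$-invariant, reindexing by $b=\psi_x^{-1}(a)$ gives $xf=\sum_b c_b\,\nu(xb)$, which equals $fx$. So $\nu(R^H)\subseteq Z(A)$, and injectivity of $\nu$ gives injectivity of the restricted map.

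The main obstacle is the converse, surjectivity onto $Z(A)$. Suppose $f$ is central. Comparing supports in $xf=fx$ via the bijection $\nu$ gives an equality of multisets
\[
\{\,x\,\psi_x^{-1}(a)\,\}_a=\{\,a\,\psi_a^{-1}(x)\,\}_a\quad\text{(with coefficients) in }\NN^X .
\]
I would exploit $\deg_x$ counting together with a maximality argument. Consider the finite support $S$ and the total ordering by the exponent vector. Each $a\in S$ is matched with $a'\in S$ such that $a\,\psi_a^{-1}(x)=x\,\psi_x^{-1}(a')$.

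To show $\psi_a=\id$, sum over all $x\in X$. For each fixed $a$, the multiset $\{\psi_a^{-1}(x)\}_x$ is all of $X$. Summing the identity over $x$ with the map $a\mapsto$ ``remove one letter'', I expect to derive that the $H$-orbit structure forces $\psi_x^{-1}$ to permute $S$ preserving coefficients, i.e.\ $p$ is $H$-invariant. Then $xf=\sum_b c_b\nu(xb)$ and $fx=\sum_a c_a\nu(a\psi_a^{-1}(x))$, so the multisets $\{ab: b\}$ and $\{a\psi_a^{-1}(x)\}$ coincide. Taking $a$ maximal in the exponent order should then give $\psi_a^{-1}(x)=x$, and an induction (downward on the order) yields $\psi_a=\id$ on the support, i.e.\ $p\in R^H$.

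Making this matching argument rigorous, in particular disentangling $H$-invariance from membership in $K$, is the step I expect to require the most care. A cleaner alternative to try first is to multiply by a suitable power of $\Delta$ or of the $a_x$ of Theorem \ref{thm:DualReflection}, whose $\nu$-preimages lie in $K$, and reduce to that lattice.
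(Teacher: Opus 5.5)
Parts (1)--(3) and the inclusion $\nu(R^H)\subseteq Z(A)$ are correct and essentially match the paper. There is one slip: in (ii) the displayed identity should read $\nu(x\,\psi_x^{-1}(a))=x\,\nu(a)$, which your next sentence states correctly. The genuine gap is the reverse inclusion $Z(A)\subseteq\nu(R^H)$, which your proposal does not prove.

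Your plan has two steps: (a) show $p$ is $H$-invariant, then (b) show its support lies in $K$.

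Step (b) is fine once (a) is known. Restrict $x p=\sum_a c_a\,a\,\psi_a^{-1}(x)$ to those $a$ with $\psi_a^{-1}(x)\neq x$, and take such an $a$ of maximal $x$-degree. Then $ax$ occurs on the left but has too large an $x$-degree to occur on the right.

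Step (a), however, is only announced. Summing $x\,\psi_x^{-1}(p)=\sum_a c_a\,a\,\psi_a^{-1}(x)$ over $x\in X$ yields exactly $\sum_x x\,\psi_x^{-1}(p)=\bigl(\sum_x x\bigr)p$. Neither the undefined ``remove one letter'' map nor an appeal to ``the $H$-orbit structure'' turns this into $\psi_x(p)=p$. The identity can in fact be shown to force invariance: embed the coefficients of $p$ in $\mathbb{C}$, evaluate at points $v$ with all $v_x>0$, read it as a mean-value property of $p$ on finite $H$-orbits, and apply a maximum principle. But that is a genuinely new ingredient your sketch does not contain.

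The paper reverses the order and uses a different test element, which makes the argument elementary. It commutes $f$ with $a_x=\nu(x^c)$ instead of with $x$.

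\begin{enumerate}[(i),topsep=1pt,itemsep=1pt]
\item Since $\psi_{x^c}=\id_X$ (this is the class-$c$ condition $\Omega_{c+1}(x,\dots,x,y)=y$), Lemma~\ref{lem:PsiToProduct} gives $a_x\nu(a)=\nu(x^c a)$ and $\nu(a)a_x=\nu\bigl(a\,\psi_a^{-1}(x)^c\bigr)$. So left multiplication by $a_x$ no longer permutes the letters of $a$, and both products preserve the value of $\psi_a$.
\item Split $p=\sum_{\sigma\in H}p_\sigma$ according to $\psi_a=\sigma$. Centrality then gives $p_\sigma\bigl(x^c-\sigma^{-1}(x)^c\bigr)=0$ in the domain $\Bbbk[X]$ for every $x$. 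Hence $p_\sigma=0$ unless $\sigma=\id_X$, so the support lies in $K$.
\item Invariance is now immediate from $\nu(xp)=\nu(px)=fx=xf=\nu\bigl(x\,\psi_x^{-1}(p)\bigr)$ and injectivity of $\nu$.
\end{enumerate}

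Your closing suggestion points in this direction, but powers of $\Delta$ would not do. We have $\Delta^m=\nu(\delta^m)$ with $\delta^m$ fixed by every permutation. So commuting with $\Delta^m$ only yields $\psi_\delta^m(p)=p$ and says nothing about which $\psi_a$ occur on the support. It is the individual, generally non-central, elements $a_x$ that separate the $\psi$-components.
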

\begin{proof}
(1) Since $1 \in K$, it is nonempty. Now suppose that $a,b \in K$. Then, by Lemma \ref{lem:PsiToProduct}, we have
\begin{align*}
\psi_{ab} = \psi_{\psi_a(b)} \circ \psi_a = \psi_b = \id_X,
\end{align*}
so $ab \in \NN^X$. Thus $K$ is a submonoid of $M$, and the claim for $R$ also follows.

(2) Let $x \in X$ and $a \in K$. We want to show that $\psi_x(a) \in K$, i.e.~$\psi_{\psi_x(a)} = \id_X$. By Lemma \ref{lem:PsiToProduct}, we have
\begin{align*}
\psi_{\psi_x(a)} \circ \psi_x = \psi_{xa} = \psi_{ax} = \psi_{\psi_a(x)} \circ \psi_a = \psi_x,
\end{align*}
and hence $\psi_{\psi_x(a)} = \id_X$, as desired.

(3) It suffices to show that, if $a,b \in K$, then $\nu(ab) = \nu(a)\nu(b)$. Indeed, Lemma \ref{lem:PsiToProduct} gives
\begin{align*}
\nu(ab) = \nu(a) \nu(\psi_a(b)) = \nu(a) \nu(b).
\end{align*}
Therefore $\nu$ restricts to an algebra homomorphism $R \to A$ and, since $R$ is commutative, its image $\nu(R)$ is a commutative subalgebra of $A$.

(4) We first show that $\nu\big(R^H\big) \subseteq Z(A)$. Let 
\begin{align*}
p = \sum_{a \in K} \lambda_a \nu(a) \in R^H,
\end{align*}
where $\lambda_a \in \Bbbk$, only finitely many of which are nonzero. It suffices to show that $x\nu(p)=\nu(p)x$ in $A$ for all $x \in X$. Indeed, if $x \in X$ then
\begin{align*}
x \nu(p) &= \nu(x) \nu(p) = \nu(x \psi^{-1}_x(p)) = \nu(xp) = \nu(px)
= \nu\raisebox{-2pt}{$\Big($} \sum_{a \in K} \lambda_a ax \raisebox{-2pt}{$\Big)$}
= \sum_{a \in K} \lambda_a \nu(ax) \\
&= \sum_{a \in K} \lambda_a \nu(a) \psi_a(x)
= \sum_{a \in K} \lambda_a \nu(a) x
= \nu(p) x,
\end{align*}
where the third equality uses $H$-invariance of $p$, and the penultimate equality uses the fact that $\psi_a = \id_X$ for all $a \in K$. Therefore $\nu(p) \in Z(A)$, so $R^H \subseteq Z(A)$.

Seeking to show the reverse inclusion, let $z \in Z(A)$ be arbitrary, and write
\begin{align*}
z = \sum_{a \in \NN^X} \lambda_a \nu(a) \quad \text{and} \quad p = \sum_{a \in \NN^X} \lambda_a a
\end{align*}
for some $\lambda_a \in \Bbbk$, only finitely many of which are nonzero. We first show that the support of $z$ is contained in $K$. To this end, for each $\sigma \in H$, let
\begin{align*}
z_\sigma = \sum_{\psi_a = \sigma} \lambda_a \nu(a) \quad \text{and} \quad p_\sigma = \sum_{\psi_a = \sigma} \lambda_a a
\end{align*}
so that $z = \sum_{\sigma \in H} z_\sigma$. If $x \in X$, write $a_x = \nu(x^c)$, where $c$ is the minimal Dehornoy class of $X$. By \cite[Lemma 5.5]{dehornoy}, we have $\psi_{x^c} = \id_X$, and Lemma \ref{lem:PsiToProduct} shows that $\psi_{ax^c} = \psi_a$ for all $a \in \NN^X$. Then, if $a \in \NN^X$ is arbitrary, we have
\begin{gather*}
a_x \nu(a) = \nu(x^c) \nu(a) = \nu(x^c \psi_{x^c}^{-1}(a)) = \nu(x^c a) \\
\nu(a) a_x = \nu(a) \nu(x^c) = \nu(a \psi_{a}^{-1}(x^c)) = \nu(a \psi_a^{-1}(x)^c).
\end{gather*}
Writing $y \coloneqq \psi_a^{-1}(x) \in X$, this shows that $H$-components in which $a_x \nu(a)$ and $\nu(a) a_x$ lie are $\psi_{x^c a}$ and $\psi_{ay^c}$, both of which are equal to $\psi_a$. Therefore $a_x z = z a_x$ if and only if $a_x z_\sigma = z_\sigma a_x$ for all $\sigma \in H$.

Now fix $\sigma \in H$. If $x \in X$, then we have
\begin{align*}
\nu(p_\sigma x^c)
&= \nu\raisebox{-2pt}{$\Big($} \sum_{\psi_a=\sigma} \lambda_a ax^c \raisebox{-2pt}{$\Big)$}
= \sum_{\psi_a=\sigma} \lambda_a \nu(x^c a)
= \nu(x^c) \sum_{\psi_a=\sigma} \lambda_a \nu(a) 
= a_x z_\sigma
= z_\sigma a_x \\
&= \sum_{\psi_a=\sigma} \lambda_a \nu(a) \nu(x^c) 
= \sum_{\psi_a=\sigma} \lambda_a \nu(a \psi_a^{-1}(x^c))
= \nu\raisebox{-2pt}{$\Big($} \sum_{\psi_a=\sigma} \lambda_a a \sigma^{-1}(x)^c \raisebox{-2pt}{$\Big)$}
= \nu(p_\sigma \sigma^{-1}(x)^c)
\end{align*}
Since $\nu$ is injective, we get $p_\sigma (x^c - \sigma^{-1}(x)^c) = 0$ in the domain $\Bbbk[X]$. If $\sigma \neq \id$ then, choosing $x \in X$ with $\sigma(x) \neq x$, we have $(x^c - \sigma^{-1}(x)^c) \neq 0$, and hence $p_\sigma = 0$. Thus $p = p_{\id_X}$ and $z = z_{\id_X}$ have support contained in $K$.

Finally, we show that $p$ is invariant under $H$, so that $p \in R^H$. If $x \in X$ then
\begin{align*}
\nu(xp) 
&= \nu(px) 
= \sum_{\psi_a=\id_X} \lambda_a \nu(ax)
= \sum_{\psi_a=\id_X} \lambda_a \nu(a) \psi_a(x)
= \sum_{\psi_a=\id_X} \lambda_a \nu(a) x \\
&= zx
= xz
= \sum_{\psi_a=\id_X} \lambda_a x \nu(a)
= \sum_{\psi_a=\id_X} \lambda_a \nu(x \psi_x^{-1}(a))
= \nu(x \psi_x^{-1}(p)).
\end{align*}
Since $\nu$ is injective, we get $xp = x \psi_x^{-1}(p)$, and hence $p = \psi_x^{-1}(p)$. This holds for all $x \in X$, so $p \in R^H$ and hence $Z(A) \subseteq \nu(R^H)$.
\end{proof}

The above theorem makes it practical to compute the centre of $\Bbbk M$ in many examples. The group $H$ is easily computed and, writing $X = \{x_1, \dots, x_n\}$, a generating set for $K$ is contained in $\{ x_1^{i_1} \dots x_n^{i_n} \mid 0 \leqslant i_j < c \}$. From here, the invariant ring $\Bbbk[K]^H$ can be determined using standard techniques from commutative invariant theory.

\begin{example} \label{ex:BuzagloCentre}
We return to Example \ref{ex:BuzagloExample} and calculate the centre of \begin{align*}
\Bbbk M =
\frac{\Bbbk \langle x,y,z \rangle}{\langle \, x^2-yz, \; y^2-zx, \; z^2-xy \,\rangle}.
\end{align*}
Since $\psi_a = (x\;z\;y)$ for all $a \in X$, we have $H = \langle (x\;z\;y) \rangle \leqslant \Sym(X)$. To determine $K$, we note that, if $a,b \in X$ then
\begin{align*}
\psi_{ab} = \psi_{\psi_a(b)} \circ \psi_a = (x\;z\;y)^2,
\end{align*}
and, inductively, if $a \in \NN^X$ has length $k$ then $\psi_a = (x\;z\;y)^k$. Accordingly, 
\begin{align*}
K = \{a \in \NN^X \mid \ell(a) \equiv 0 \text{ mod } 3 \}.
\end{align*}
Therefore $\Bbbk[K] = \Bbbk[x,y,z]^{(3)}$, the $3$-Veronese of a polynomial ring in three variables.

We now determine $\Bbbk[K]^H = (\Bbbk[x,y,z]^{(3)})^H$. We start by determining the Hilbert series of $\Bbbk[K]$. Since $\Bbbk[K] = \Bbbk[x,y,z]^{\langle g \rangle}$, where $g = \begin{psmallmatrix} \omega & 0 & 0 \\ 0 & \omega & 0 \\ 0 & 0 & \omega \end{psmallmatrix}$ and $\omega$ is a primitive third root of unity, Molien's theorem gives
\begin{align*}
\hilb_{\Bbbk[K]}(t) = \frac{1}{3}\bigg(\frac{1}{(1-t)^3} + \frac{1}{(1-\omega t)^3} + \frac{1}{(1-\omega^2t)^3}\bigg) = \frac{1-7t^3+t^6}{(1-t^3)^3}.
\end{align*}
We can determine the Hilbert series of $\Bbbk[K]^H$ in a similar way. In each degree $d=3k$ for which $\Bbbk[K]$ is nonzero, the generator $\sigma \in H$ permutes the basis elements, and there is a unique fixed point, namely $x^k y^k z^k$. Consequently, Molien's Theorem, in its formulation involving the trace, gives
\begin{align*}
\hilb_{\Bbbk[K]^H}(t) = \frac{1}{3}\bigg(\frac{1-7t^3+t^6}{(1-t^3)^3} + \frac{1}{(1-t^3)} + \frac{1}{(1-t^3)}\bigg) = \frac{1-t^9}{(1-t^3)^4}.
\end{align*}

We now determine a generating set for $\Bbbk[K]^H$. Write
\begin{align*}
a = x+y+z, \quad b = x+\omega y + \omega^2 z, \quad c = x+\omega^2y+\omega z,
\end{align*}
which satisfy
\begin{align*}
\sigma(a) = a, \quad \sigma(b) = \omega b, \quad \sigma(c) = \omega^2 c.
\end{align*}
Consequently, we have $a^3, b^3, c^3, abc \in \Bbbk[K]^H$. Define $S = \Bbbk[a^3,b^3,c^3,abc] \subseteq \Bbbk[K]^H$ and $T = \Bbbk[a^3,b^3,c^3]$, the latter of which is a weighted polynomial ring. By degree considerations, we have
\begin{align*}
S = T \oplus T(abc) \oplus T(abc)^2, 
\end{align*}
and hence
\begin{align*}
\hilb_S(t) = (1+t^3+t^6) \hilb_T(t) = \frac{1-t^9}{(1-t^3)^4} = \hilb_{\Bbbk[K]^H}(t).
\end{align*}
Since $S \subseteq \Bbbk[K]^H$ and they have the same Hilbert series, we have equality. In particular, it is clear that the generators of $\Bbbk[K]^H$ satisfy a single relation in degree $9$, so that
\begin{align*}
Z(\Bbbk M) \cong \frac{\Bbbk[A,B,C,D]}{\langle ABC - D^3\rangle}.
\end{align*}
We may choose explicit elements of $\Bbbk M$ which satisfy this relation by applying $\nu$ to the corresponding elements of $\Bbbk[K]^H$:
\begin{align*}
A &= \nu(a^3) = (xzy + yxz + zyx) + 3(xz^2 + yx^2 + zy^2) + 3(x^2z + y^2x + z^2y) + 6x^3, \\
B &= \nu(a^3) = (xzy + yxz + zyx) + 3\omega(xz^2 + yx^2 + zy^2) + 3\omega^2(x^2z + y^2x + z^2y) + 6x^3, \\
C &= \nu(a^3) = (xzy + yxz + zyx) + 3\omega^2(xz^2 + yx^2 + zy^2) + 3\omega(x^2z + y^2x + z^2y) + 6x^3, \\
D &= \nu(a^3) = (xzy + yxz + zyx) - 3x^3. 
\end{align*}
One can verify that the elements $A,B,C,D \in \Bbbk M$ indeed satisfy $ABC=D^3$.
\end{example}

Theorem \ref{thm:DualReflection} shows that $(\Bbbk M)_{1_{G_k}}$ forms a large commutative subring of $\Bbbk M$. It is natural to ask when this subring coincides with the centre of $\Bbbk M$.

\begin{lemma} \label{lem:Centre}
Suppose $r : X \to X$ is a solution to the Yang--Baxter equation and $k \geqslant 1$. Then $(\Bbbk M)_{1_{G_k}} = Z(\Bbbk M)$ if and only if $r(x,y) = (y,x)$ for all $x,y \in X$ and $k=1$.
\end{lemma}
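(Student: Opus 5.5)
We prove both directions, using Theorem \ref{thm:DualReflection} to describe $(\Bbbk M)_{1_{G_k}}$ and Theorem \ref{thm:Centre} to describe $Z(\Bbbk M)$.

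\emph{The easy direction.} Suppose $r(x,y)=(y,x)$ for all $x,y$ and $k=1$. Then $M$ is the free commutative monoid $\NN^X$ and $\Bbbk M=\Bbbk[X]$, so $Z(\Bbbk M)=\Bbbk M$. In the cycle set language $x*y=y$, so $\Omega_2(x,y)=y$ and the minimal Dehornoy class is $c=1$. Hence $G_1$ is the quotient of $\NN^X$ by $x=1$ for all $x\in X$, which is the trivial group (its order is $(ck)^n=1$). Therefore $(\Bbbk M)_{1_{G_1}}=\Bbbk M=Z(\Bbbk M)$.

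\emph{The converse.} Suppose $(\Bbbk M)_{1_{G_k}}=Z(\Bbbk M)$. By Theorem \ref{thm:DualReflection}, the left side has $\Bbbk$-basis $\pi_k^{-1}(1_{G_k})=\nu(\{b^{ck}\mid b\in\NN^X\})$. By Theorem \ref{thm:Centre}, the centre is $\nu(R^H)$, and $R^H$ contains $\nu^{-1}$ of this basis, namely the monomials $b^{ck}$.

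Plan: compare the degrees of the smallest nonscalar elements on each side. Every nonconstant element of $(\Bbbk M)_{1_{G_k}}$ has degree at least $ck$. Now consider the $H$-orbit sum of a monomial in $K$. Since $\psi_{\delta}$ is a permutation, Lemma \ref{lem:PsiToProduct} implies that $\psi$ restricted to the degree-$m$ monomials lands in the finite group $H$. So taking $m=|H|$, the monomial $x^{|H|}$ satisfies $\psi_{x^{|H|}}=\psi_x'$-composites. A cleaner approach is to use the fact that $K\ni x^c$ (by \cite[Lemma 5.5]{dehornoy}). The orbit sum $p=\sum_{h\in H} h(x^c)$ lies in $R^H$, so $\nu(p)$ is central of degree $c$.

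If $k\geq 2$, then $\nu(p)$ is a nonzero central element of degree $c<ck$, while the dual invariants contain nothing nonscalar in degree $c$. This is a contradiction, so $k=1$.

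With $k=1$, the basis of the dual invariants consists of the $\nu(b^c)$. So $\nu(p)$ must be a combination of elements $\nu(b^c)$ of degree $c$, i.e.\ with $b\in X$. Thus every element of $H\cdot x^c$ is a $c$-th power. This is automatic, since $H$ permutes $X$, so it gives no contradiction and a finer test element is needed.

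For this test element, use the monomial $xy$ with $x\neq y$. The aim is to show $c=1$, i.e.\ $x*y=y$, which is exactly the condition $r(x,y)=(y,x)$. If $c\geq2$, then $x^{c-1}y\cdot y^{c-1}x$ has degree $2c$ and is not a $c$-th power. We need some such monomial in $K$ whose $H$-orbit sum contains no $c$-th powers. Then its image under $\nu$ is central but lies outside the dual invariant ring, because $\nu$ is injective and its support avoids $\{b^c\}$.

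The \textbf{main obstacle} is producing such a monomial. The first thing to try is $\prod_{x\in X}x^{c}\cdot$ (product of the $\psi$-orbit of some $y$), or more simply $\delta^{c}$ modified. Alternatively, use $K\supseteq\{a^{|H|\cdot\text{ord}}\}$: for any $a$, $\psi_{a^N}$ is trivial for a suitable $N$ (iterate Lemma \ref{lem:PsiToProduct} on the finite group generated), and choose $a=xy$ with $N$ not divisible by $c$. Then $(xy)^N$ is not a $c$-th power, and neither is any $H$-translate $h(x)^Nh(y)^N$ with $h(x)\neq h(y)$. So the central element $\nu(\text{orbit sum})$ is not a dual invariant, which forces $c=1$.

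Finally, $c=1$ means $\Omega_2(x,y)=x*y=y$ for all $x,y$. Then $L_x=\id$, so $r(x,y)=(y,x)$.

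The remaining uncertainty is ensuring we can pick $N$ coprime to $c$. If $\psi_{(xy)^N}=\id$ forces $c\mid N$, we instead exploit the unequal exponents $x^{N}y^{N'}$ directly.
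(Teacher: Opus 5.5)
Your easy direction is correct, and so is your degree argument forcing $k=1$: the nonzero central element $\nu\big(\sum_{h\in H}h(x^c)\big)$ has degree $c<ck$, so it cannot lie in the dual invariants. The gap is the step that should show $r$ is the flip. You never complete it, and the test element you propose need not exist. In Example \ref{ex:BuzagloExample}, where $c=3$, one has $\psi_a=(x\;z\;y)^{\ell(a)}$. So $K$ is exactly the set of monomials of length divisible by $3$ (Example \ref{ex:BuzagloCentre}). Hence $\psi_{(xy)^N}=\id$ if and only if $3\mid N$, and the choice ``$a=xy$ with $c\nmid N$'' is impossible. Your fallback with unequal exponents is left unspecified. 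In general you would need an actual proof that $K$ contains a monomial which is not a $c$-th power, and nothing in your proposal supplies this.

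The missing idea is to use the other inclusion, $(\Bbbk M)_{1_{G_k}}\subseteq Z(\Bbbk M)$. You only ever tested $Z(\Bbbk M)\subseteq(\Bbbk M)_{1_{G_k}}$, even though you wrote down the decisive fact yourself (``$R^H$ contains the monomials $b^{ck}$''). The argument then runs as follows.
\begin{enumerate}[(i)]
\item Under the hypothesis, $\nu(x^{ck})$ is central, so Theorem \ref{thm:Centre} and injectivity of $\nu$ give $x^{ck}\in R^H$.
\item Since $H$ acts by permuting the variables, $\psi_y(x)^{ck}=x^{ck}$ in $\NN^X$, hence $\psi_y(x)=x$ for all $x,y\in X$.
\item Comparing $\nu(yx)=y\,\psi_y(x)$ with $\nu(yx)=y\,(y*x)$ and cancelling gives $\psi_y=L_y$. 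So every $L_y$ is the identity, and $r(x,y)=(y,x)$.
\item Then $\Bbbk M=\Bbbk[X]$ is commutative while the dual invariants are $\Bbbk[x_1^k,\dots,x_n^k]$, which forces $k=1$. Your degree argument also gives this.
\end{enumerate}
This is essentially the paper's proof, which phrases the key step as $\nu(x^{ck}y)=\nu\big(y\,\psi_y^{-1}(x)^{ck}\big)$ and cancels in $\NN^X$.
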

\begin{proof}
For sufficiency, if $r(x,y) = (y,x)$ for all $x,y \in X$, then $\Bbbk M$ is a polynomial ring and $G$ is trivial, so $(\Bbbk M)_{1_{G_k}} = Z(\Bbbk M)$.

Conversely, assume $(\Bbbk M)_{1_{G_k}} = Z(\Bbbk M)$ so that, if $x,y \in X$, then $\nu(x^c)^k y = y \nu(x^c)^k$. Therefore
\begin{align*}
\nu(x^{ck} y) &= \nu(x^{ck}) \psi_{x^{ck}}(y) = \nu(x^{ck}) y = y \nu(x^c)^k = \nu(y) \nu(x^c)^k = \nu(y) \nu(x^{ck}) = \nu(y \, \psi^{-1}_y(x^{ck})) \\
&= \nu( y\, \psi^{-1}_y(x)^{ck} ),
\end{align*}
where the second equality uses \cite[Lemma 5.5]{dehornoy} and the penultimate equality uses Lemma \ref{lem:PsiToProduct}. Since $\nu$ is a bijection, we have $x^{ck} y = \psi_y(x)^{ck} y$ in $\NN^X$, and cancellation gives $\psi^{-1}_y(x) = x$. This holds for all $x,y \in X$, and \cite[Proposition 1.5, Lemma 4.4]{dehornoy} together imply that $r(x,y) = (y,x)$ for all $x,y \in X$. In particular, $M$ is the free abelian monoid generated by $X$, $G_k \cong C_k^n$, where $n = |X|$, and
\begin{align*}
\Bbbk[x_1, \dots, x_n] = Z(\Bbbk M) = (\Bbbk M)_{1_{G_k}} = \Bbbk[x_1^k, \dots, x_n^k].
\end{align*}
Therefore $k=1$, as desired.
\end{proof}


\section{The Koszul dual and the Koszul complex}

Throughout this section, write $A = \Bbbk^\alpha M$ for the twisted structure algebra associated to some solution $r : X^2 \to X^2$ to the Yang--Baxter equation. The goal of this section is to give an explicit description of the Koszul dual of $A^!$ of $A$, and to write down explicitly the Koszul complex for $A$.

Write $V$ for the $\Bbbk$-vector space with basis $X$; we shall write $X^\vee = \{ x^\vee \mid x \in X \}$ for the corresponding dual basis of $V^*$.

\begin{lemma} \label{lem:KoszulDual}
A presentation for the Koszul dual of $A$ is
\begin{align*}
A^! = \frac{\Bbbk\langle\, X^\vee \,\rangle}{
\bigg\langle\begin{array}{cl}
y^\vee x^\vee + \frac{\alpha(u,v)}{\alpha(x,y)} v^\vee u^\vee & \text{ if } r(x,y) = (u,v) \neq (x,y) \\
y^\vee x^\vee = 0 & \text{ if } r(x,y)=(x,y)
\end{array}
\bigg\rangle}.
\end{align*}
\end{lemma}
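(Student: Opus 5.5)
The plan is to compute $\mathcal{R}^\perp$ directly from the quadratic presentation of $\Bbbk^\alpha M(X,r)$ recorded in Section \ref{sec:preliminaries}, using the identification $\varphi$ with its reversed pairing, $\varphi(f\otimes g)(v\otimes w) = g(v)f(w)$. In particular $(y^\vee \otimes x^\vee)(x'\otimes y') = \delta_{x,x'}\delta_{y,y'}$, so the dual basis of $\{x\otimes y\}$ is $\{y^\vee\otimes x^\vee\}$. This reversal is what produces the $y^\vee x^\vee$ ordering in the statement.

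First I organise $X^2$ into the orbits of the involution $r$. Since $r^2=\id$, each orbit is either a fixed point $\{(x,y)\}$ or a pair $\{(x,y),(u,v)\}$ with $(u,v)=r(x,y)\neq(x,y)$. The relation space $\mathcal{R}\subseteq V^{\otimes 2}$ decomposes accordingly. For a fixed point the relation $xy - \frac{\alpha(x,y)}{\alpha(x,y)}xy$ is zero, so that orbit contributes nothing. For a pair orbit it contributes the one-dimensional space spanned by $x\otimes y - \frac{\alpha(x,y)}{\alpha(u,v)}\,u\otimes v$. The relation attached to $(u,v)$ is just a scalar multiple of this one, so it gives nothing new. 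Hence $\mathcal{R} = \bigoplus_{\text{pair orbits}} \Bbbk\bigl(x\otimes y - \tfrac{\alpha(x,y)}{\alpha(u,v)} u\otimes v\bigr)$, of dimension $\binom{n}{2}$, which matches the count of $n$ fixed points.

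Next I compute $\mathcal{R}^\perp$ orbit by orbit, since the pairing respects the decomposition of $V^{\otimes 2}$ and of $(V^*)^{\otimes 2}$ into orbit-spans. For a fixed point $(x,y)$ nothing needs to be annihilated, so the whole line $\Bbbk\, y^\vee\otimes x^\vee$ lies in $\mathcal{R}^\perp$. This gives the relation $y^\vee x^\vee=0$. For a pair orbit the annihilator inside the $2$-dimensional span of $y^\vee\otimes x^\vee$ and $v^\vee\otimes u^\vee$ is one-dimensional. Evaluating $a\,y^\vee\otimes x^\vee + b\, v^\vee\otimes u^\vee$ on the relation gives $a - b\frac{\alpha(x,y)}{\alpha(u,v)}=0$. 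The solution $a=1$, $b=\frac{\alpha(u,v)}{\alpha(x,y)}$ recovers exactly the stated relation $y^\vee x^\vee + \frac{\alpha(u,v)}{\alpha(x,y)} v^\vee u^\vee$, up to the sign convention that needs checking.

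The dimension check, $n + \binom{n}{2}$ relations against $\dim (V^*)^{\otimes 2} - \dim\mathcal{R} = n^2-\binom{n}{2}$, confirms that nothing is missed. The only real obstacle is bookkeeping of signs and orderings. Honestly computed, the perpendicular element is $y^\vee x^\vee - \frac{\alpha(u,v)}{\alpha(x,y)}\cdot(\ldots)$, and I would recheck carefully which sign the pairing produces. If the computation gives a minus sign, the statement's sign should be reconciled by the convention for $\varphi$ or corrected accordingly. I would also verify that the relations for $(x,y)$ and for $(u,v)$ in the displayed presentation are proportional, using $\frac{\alpha(u,v)}{\alpha(x,y)}\cdot\frac{\alpha(x,y)}{\alpha(u,v)}=1$, so the presentation is consistent.
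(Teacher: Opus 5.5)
Your argument is the paper's: compute $\mathcal{R}^\perp$ directly under the reversed pairing. Fixed points of $r$ contribute the monomials $y^\vee x^\vee$, each two-element orbit contributes one binomial, and the count $n+\binom{n}{2} = n^2-\binom{n}{2}$ confirms completeness (your orbit-by-orbit block decomposition in fact makes that count redundant).

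The only thing to fix is your final paragraph. Your own equation $a - b\,\alpha(x,y)/\alpha(u,v) = 0$ already forces the plus sign in the statement. That sign does not depend on the convention for $\varphi$, which only decides that $x\otimes y$ is dual to $y^\vee\otimes x^\vee$ rather than to $x^\vee\otimes y^\vee$. So the claimed ``honest'' minus sign is wrong, and the hedge should be deleted.
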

\begin{proof}
Let $\mathcal{R} \subseteq V \otimes V$ be the subspace spanned by the $\binom{n}{2}$ linearly independent relations in the defining ideal of $A$, so $\dim_\Bbbk \mathcal{R} = \binom{n}{2}$. We wish to determine $\mathcal{R}^\perp \subseteq V^* \otimes V^*$. We recall our convention that that the pairing 
$(V^* \otimes V^*) \otimes (V \otimes V) \to \Bbbk$ is given by $(f \otimes g)(u \otimes v) \mapsto f(v) g(u)$.

If $r(x,y) = (x,y)$, then the monomial $xy$ does not appear in any of the defining relations in $\mathcal{R}$, so $y^\vee x^\vee$ vanishes on $\mathcal{R}$. On the other hand, if $r(x,y) = (u,v) \neq (x,y)$ then $y^\vee x^\vee + \frac{\alpha(u,v)}{\alpha(x,y)} v^\vee u^\vee$ vanishes on the relation $xy - \frac{\alpha(x,y)}{\alpha(u,v)} uv$ in $A$, essentially by construction, and it vanishes on every other relation because none of them involve the monomials $xy$ or $uv$. 

We have therefore identified $\binom{n}{2} + n = \binom{n+1}{2}$ linearly independent elements of $\mathcal{R}^\perp$. Since $\mathcal{R}^\perp$ is the annihilator of $\mathcal{R}$ inside $V \otimes V$, it has dimension $\dim_\Bbbk \mathcal{R}^\perp = n^2 - \binom{n}{2} = \binom{n+1}{2}$, so we are done.
\end{proof}

We note that the relations in $A^!$ could just be given as
\begin{align*}
y^\vee x^\vee + \frac{\alpha(u,v)}{\alpha(x,y)} v^\vee u^\vee = 0 & \text{ if } r(x,y) = (u,v)
\end{align*}
since, if $r(x,y) = (x,y)$, then the corresponding relation becomes $2 y^\vee x^\vee = 0$, and therefore $y^\vee x^\vee = 0$ since we work over a field.

The Koszul dual $A^!$ is $\NN$-graded with $\deg x^\vee = 1$ for all $x \in X$. It also has a natural $G_k$-grading, for any $k \geqslant 1$,  obtained by declaring $\deg x = \pi_k(x)^{-1} \in G_k$ for all $x \in X$. The negative exponent ensures that the natural evaluation map
\begin{align*}
(A^!)_i \otimes (A)_i \to \Bbbk, \quad (x_i^\vee \otimes \dots \otimes x_1^\vee) \otimes (x_1 \otimes \dots \otimes x_i) \mapsto x^\vee_1(x_1) \dots x^\vee_i(x_i)
\end{align*}
is a morphism of $(\Bbbk G_k)^*$-modules. 

We define a map $\gamma$ on tuples of elements of $X$ as follows:
\begin{align*}
\gamma_\alpha : X^* \to \Bbbk, \quad \gamma_\alpha(x_1, \dots, x_m) = \prod_{j=1}^{m-1} \alpha( x_1 \dots x_j, x_{j+1}).
\end{align*}
This has the following useful property:

\begin{lemma} \label{lem:GammaProperty}
Suppose that $\mathbf{x} = (x_1, \dots, x_m)$ and $\widetilde{\mathbf{x}} = (\widetilde{x}_1, \dots, \widetilde{x}_m)$ differ only in coordinates $i$ and $i+1$, and in these positions, we have $x_i x_{i+1} = \widetilde{x}_i \widetilde{x}_{i+1}$ in $M$. Then
\begin{align*}
\frac{\gamma_\alpha(\mathbf{x})}{\gamma_\alpha(\widetilde{\mathbf{x}})} = \frac{\alpha(x_i,x_{i+1})}{\alpha(\widetilde{x}_i,\widetilde{x}_{i+1})}.
\end{align*}
\end{lemma}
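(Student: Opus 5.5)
The plan is to compare the two products factor by factor. Write
\begin{align*}
\gamma_\alpha(\mathbf{x}) = \prod_{j=1}^{m-1} \alpha(x_1 \dots x_j, x_{j+1}),
\end{align*}
and split the index range into three parts: $j \leq i-2$, $j \in \{i-1, i\}$, and $j \geq i+1$.

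First, the outer factors cancel. For $j \leq i-2$ the factor $\alpha(x_1\dots x_j, x_{j+1})$ involves only coordinates $1,\dots,i-1$, which agree in $\mathbf{x}$ and $\widetilde{\mathbf{x}}$. For $j \geq i+1$ the prefix products $x_1 \dots x_j$ and $\widetilde{x}_1 \dots \widetilde{x}_j$ coincide in $M$, because $x_ix_{i+1} = \widetilde{x}_i\widetilde{x}_{i+1}$ in $M$, and the second arguments $x_{j+1} = \widetilde{x}_{j+1}$ agree. So these factors are identical too.

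Second, the two middle factors. Write $a = x_1 \dots x_{i-1} \in M$, with $a = 1$ if $i = 1$; normalisation of $\alpha$ then handles the degenerate case. The remaining factors are
\begin{align*}
\alpha(a, x_i)\,\alpha(a x_i, x_{i+1}) \quad \text{versus} \quad \alpha(a, \widetilde{x}_i)\,\alpha(a\widetilde{x}_i, \widetilde{x}_{i+1}).
\end{align*}
Applying the $2$-cocycle identity with $(a, x_i, x_{i+1})$ gives
\begin{align*}
\alpha(a,x_i)\alpha(ax_i,x_{i+1}) = \alpha(a, x_ix_{i+1})\,\alpha(x_i,x_{i+1}),
\end{align*}
and the same identity holds for the tilded tuple. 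Since $x_ix_{i+1} = \widetilde{x}_i\widetilde{x}_{i+1}$ in $M$, the factors $\alpha(a, x_ix_{i+1})$ and $\alpha(a, \widetilde{x}_i\widetilde{x}_{i+1})$ are equal and nonzero. They therefore cancel in the ratio, leaving exactly $\alpha(x_i,x_{i+1})/\alpha(\widetilde{x}_i,\widetilde{x}_{i+1})$.

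The only delicate point is the bookkeeping at the boundary. When $i = 1$, the factor $\alpha(a, x_i)$ is absent; it would equal $\alpha(1,x_1) = 1$, so the formula is unchanged. When $i+1 = m$, the range $j \geq i+1$ is empty. Neither case causes real difficulty. Note also that $\alpha$ is a function on $M$, so it depends on the elements of $M$ and not on the words representing them, which is what lets us identify the prefix products.
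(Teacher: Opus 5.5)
Your proposal is correct and follows the paper's proof: the paper likewise sets aside the common factors before and after positions $i, i+1$ (its $u$ and $v$), puts $p = x_1 \dots x_{i-1}$, and applies the cocycle identity to the two middle factors so that the equal terms $\alpha(p, x_i x_{i+1}) = \alpha(p, \widetilde{x}_i \widetilde{x}_{i+1})$ cancel. Your explicit treatment of the boundary case $i = 1$ via normalisation of $\alpha$ is a small point the paper leaves implicit.
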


\begin{proof}
Write
\begin{align*}
u = \alpha(x_1,x_2) \dots \alpha(x_1 \dots x_{i-2}, x_{i-1}) \quad \text{and} \quad 
v = \alpha(x_1 \dots x_{i+1}, x_{i+2}) \dots \alpha(x_1 \dots x_{n-1}, x_n).
\end{align*}
Since $x_i x_{i+1} = \widetilde{x}_i \widetilde{x}_{i+1}$, we have
\begin{gather*}
\gamma_\alpha(\mathbf{x}) = u \; \alpha(x_1 \dots x_{i-1}, x_i) \alpha(x_1 \dots x_{i-1} x_i, x_{i+1}) \; v, \\
\gamma_\alpha(\widetilde{\mathbf{x}}) = u \; \alpha(x_1 \dots x_{i-1}, \widetilde{x}_i) \alpha(x_1 \dots x_{i-1} \widetilde{x}_{i}, \widetilde{x}_{i+1}) \; v.
\end{gather*}
Thus, writing $p = x_1 \dots x_{i-1}$, the cocycle identity gives
\begin{align*}
\frac{\gamma_\alpha(\mathbf{x})}{\gamma_\alpha(\widetilde{\mathbf{x}})}
= \frac{\alpha(p,x_i)\, \alpha(p x_i, x_{i+1})}{\alpha(p,\widetilde{x}_i)\, \alpha(p \widetilde{x}_{i}, \widetilde{x}_{i+1})}
= \frac{\alpha(p,x_i x_{i+1})\, \alpha(x_i, x_{i+1})}{\alpha(p,\widetilde{x}_i \widetilde{x}_{i+1})\, \alpha(\widetilde{x}_{i}, \widetilde{x}_{i+1})}
= \frac{\alpha(x_i,x_{i+1})}{\alpha(\widetilde{x}_i,\widetilde{x}_{i+1})},
\end{align*}
as desired.
\end{proof}

We use this to establish a bijection between the exterior algebra of $V$ and $A^!$. We caution that, in general, this map is not an isomorphism of $\Bbbk$-algebras.

\begin{lemma}
The map
\begin{align*}
\eta : \exterior V \to A^!, \quad \eta(x_1 \wedge \dots \wedge x_m) &= \gamma_\alpha(y_1, \dots, y_m) \,y_m^\vee \dots y_1^\vee, \\
&\hspace{30pt}\text{ where } \Pi_m(x_1, \dots, x_m) = (y_1, \dots, y_m),
\end{align*}
is a bijection of $\NN$-graded vector spaces.
\end{lemma}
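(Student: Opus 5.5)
The proof has three parts: show that $\eta$ is well defined on $\exterior V$, identify a basis of each graded piece $A^!_m$, and check that $\eta$ sends a basis of $\exterior^m V$ bijectively onto it (up to nonzero scalars). The $\NN$-grading is clearly preserved. Note that $\gamma_\alpha$ takes values in $\Bbbk^\times$, because $\alpha$ does.

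\emph{Well-definedness.} By Lemma \ref{lem:OmegaAndPiProp}(2), $\Pi_m$ is symmetric, so the tuple $(y_1,\dots,y_m)$ depends only on the multiset $\{x_1,\dots,x_m\}$. Consequently $\eta$ is really defined on monomials $x_1\wedge\dots\wedge x_m$ with distinct $x_i$, read in a fixed order; set $\eta(x_1\wedge\dots\wedge x_m)=0$ when there is a repeat. A priori the formula ignores the sign of a reordering, so it is best read as a linear map determined by its values on the standard basis $\{x_{i_1}\wedge\dots\wedge x_{i_m} \mid i_1<\dots<i_m\}$. Since the statement only asks for a bijection of graded vector spaces, defining $\eta$ on this basis and extending linearly is legitimate.

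\emph{Structure of $A^!$.} By Theorem \ref{thm:Gateva}, $A$ is Koszul with $\hilb_A(t)=1/(1-t)^n$. The Hilbert series identity for Koszul algebras then gives $\hilb_{A^!}(t)=(1+t)^n$, so $\dim A^!_m=\binom nm=\dim\exterior^m V$. It therefore suffices to show that the images of the basis vectors span $A^!_m$, or equivalently are linearly independent. I will prove spanning, since this involves only the relations of Lemma \ref{lem:KoszulDual}.

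\emph{Spanning.} Each relation in Lemma \ref{lem:KoszulDual} identifies $y^\vee x^\vee$ with a nonzero scalar multiple of $v^\vee u^\vee$ when $r(x,y)=(u,v)$, or kills it when $(x,y)$ is a fixed point of $r$. Hence every monomial $w=y_m^\vee\cdots y_1^\vee$ is a scalar multiple of any other monomial $\widetilde y_m^\vee\cdots\widetilde y_1^\vee$ reachable by applying relations, that is, any $\widetilde{\mathbf y}$ with $\mu_m(\widetilde{\mathbf y})=\mu_m(\mathbf y)$ in $M$. Now write $\mathbf y=\Pi_m(\mathbf s)$. By the observation recalled in Example \ref{ex:NormalForm} (from the proof of \cite[Proposition 4.2]{dehornoy}), a single relation corresponds to an adjacent transposition of the coordinates of $\Pi_m^{-1}(\mathbf y)$. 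A fixed point of $r$ corresponds to swapping two equal entries of $\mathbf s$; I will need to check this carefully, as it follows because $\Pi$ is bijective. It follows that if $\mathbf s$ has a repeated entry, we can move the two equal entries next to each other by adjacent transpositions and then meet a fixed-point relation, so $w=0$. If the entries of $\mathbf s$ are distinct, then by Lemma \ref{lem:OmegaAndPiProp} and the symmetry of $\Pi_m$, $w$ is a nonzero multiple of the monomial attached to the sorted $\mathbf s$, which is, up to the nonzero scalar $\gamma_\alpha$, exactly $\eta$ of a basis wedge. So the images of the basis span $A^!_m$, and the dimension count finishes the proof.

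The main obstacle is the combinatorial step: translating the relations of $A^!$, including the fixed-point relations, into adjacent transpositions of $\Pi_m^{-1}$-coordinates, and showing that repeated letters force a monomial to vanish. I would establish this first for $m=2$ directly from the cycle-set description, and then use Dehornoy's lemma to pass to arbitrary positions. Lemma \ref{lem:GammaProperty} is not needed for bijectivity, since only nonvanishing of the scalars matters here.
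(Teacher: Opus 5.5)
\textbf{There is a genuine gap: your proposal never shows that the displayed formula defines a map on $\exterior V$.}

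The problem starts with a misreading of Lemma \ref{lem:OmegaAndPiProp}(2). That statement is Dehornoy's, and it concerns his $\Pi_n$, which is the formal product $\Omega_1(x_1)\cdots\Omega_n(x_1,\dots,x_n)$. What is symmetric is $\mu_m\circ\Pi_m=\nu$, not the tuple.

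The tuple $\Pi_m(x_1,\dots,x_m)$ visibly depends on the order, since its first entry is $x_1$. It is a bijection of $X^m$, which you use yourself in the spanning step. For instance, in Example \ref{ex:BuzagloExample}, $\Pi_3(x,y,z)=(x,x,x)$ while $\Pi_3(x,z,y)=(x,y,z)$. So the formula does not ``ignore the sign of a reordering''. By defining $\eta$ only on sorted wedges and extending linearly, you prove a strictly weaker statement.

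The lemma asserts that the formula, applied to an \emph{arbitrary} tuple $(x_1,\dots,x_m)$, is well defined on $\exterior V$. That is, it must respect $x_{\sigma(1)}\wedge\dots\wedge x_{\sigma(m)}=\sgn(\sigma)\,x_1\wedge\dots\wedge x_m$. This is not a formality. In Proposition \ref{prop:KoszulComplex}, $\eta(x_{i_1}\wedge\dots\wedge\widehat{x}_{i_j}\wedge\dots\wedge x_{i_m}\wedge x_{i_j})$ is computed both as $(-1)^{m-j}y_I$ and by inserting the unsorted tuple into the formula. Comparing the two is exactly what produces the coefficients of the Koszul differential.

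\textbf{What is missing.} Set $\overline\eta(x_1\otimes\dots\otimes x_m)=\gamma_\alpha(\mathbf y)\,y_m^\vee\cdots y_1^\vee$ with $\mathbf y=\Pi_m(\mathbf x)$; you need to show $\overline\eta$ is alternating on $V^{\otimes m}$.
\begin{enumerate}[(1)]
\item Let $\widetilde{\mathbf x}$ be obtained from $\mathbf x$ by swapping $x_i\neq x_{i+1}$. Then $\widetilde{\mathbf y}=\Pi_m(\widetilde{\mathbf x})$ agrees with $\mathbf y$ outside positions $i,i+1$. The two pairs are distinct with $y_iy_{i+1}=\widetilde y_i\widetilde y_{i+1}$ in $M$, so $r(y_i,y_{i+1})=(\widetilde y_i,\widetilde y_{i+1})$.
\item Lemma \ref{lem:KoszulDual} then gives $\widetilde y_{i+1}^\vee\widetilde y_i^\vee=-\frac{\alpha(y_i,y_{i+1})}{\alpha(\widetilde y_i,\widetilde y_{i+1})}\,y_{i+1}^\vee y_i^\vee$.
\item Lemma \ref{lem:GammaProperty} identifies this scalar as exactly $\gamma_\alpha(\mathbf y)/\gamma_\alpha(\widetilde{\mathbf y})$, so $\overline\eta(\widetilde{\mathbf x})=-\overline\eta(\mathbf x)$.
\item If $x_i=x_{i+1}$, your fixed-point observation gives $\overline\eta(\mathbf x)=0$.
\end{enumerate}
So Lemma \ref{lem:GammaProperty} is indispensable: the exact value of the scalar matters, not merely that it is nonzero.

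Once this is established, your spanning-plus-dimension-count argument correctly finishes the proof, though it does more than needed. Since $\Pi_m$ is bijective and $\gamma_\alpha$ takes unit values, $\overline\eta$ already hits every monomial of $A^!$ up to a unit. After descending to $\exterior V$, only the comparison of Hilbert series $(1+t)^n$ remains, which is how the paper concludes.
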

\begin{proof}
Seeking to show that $\eta$ is well-defined, we define 
\begin{align*}
\overline{\eta} : \exterior V \to A^!, \quad \overline{\eta}(x_1 \otimes \dots \otimes x_m) &= \gamma_\alpha(y_1, \dots, y_m) \,y_m^\vee \dots y_1^\vee, \\
&\hspace{30pt}\text{ where } \Pi_m(x_1, \dots, x_m) = (y_1, \dots, y_m),
\end{align*}
This map is surjective: if $y_m^\vee \dots y_1^\vee \in A^!$ then, since $\Pi_m$ is bijective, there exist $x_1, \dots, x_m \in X$ with $\Pi_m(x_1, \dots, x_n) = (y_1, \dots, y_m)$, so that $\overline{\eta}(\gamma_\alpha(y_1, \dots, y_m)^{-1} x_1 \otimes \dots \otimes x_m) = y_m^\vee \dots y_1^\vee$. 

We now show that $\overline{\eta}$ is alternating. Consider $\mathbf{x} = (x_1, \dots, x_m)$ and $\widetilde{\mathbf{x}} = (x_{\sigma(1)}, \dots, x_{\sigma(m)})$ where $\sigma = (i\;i+1)$ for some $1 \leqslant i < m$. Let $\mathbf{y} = \Pi_m(\mathbf{x})$ and $\widetilde{\mathbf{y}} = \Pi_m(\widetilde{\mathbf{x}})$. By the proof of \cite[Lemma 2.6]{dehornoy}, $\mathbf{y}$ and $\widetilde{\mathbf{y}}$ differ only in coordinates $i$ and $i+1$, and these coordinates satisfy $y_i y_{i+1} = \widetilde{y}_i \widetilde{y}_{i+1}$ in $M$. By Lemma \ref{lem:GammaProperty}, we have $\frac{\gamma_\alpha(\mathbf{y})}{\gamma_\alpha(\widetilde{\mathbf{y}})} = \frac{\alpha(y_i,y_{i+1})}{\alpha(\widetilde{y}_i,\widetilde{y}_{i+1})}$. Therefore,
\begin{align*}
\overline{\eta}(x_{\sigma(1)} \otimes \dots \otimes x_{\sigma(m)})
&= \gamma_\alpha(\widetilde{\mathbf{y}}) \; \widetilde{y}_m^\vee \dots \widetilde{y}_1^\vee \\
&= \gamma_\alpha(\widetilde{\mathbf{y}}) \; y_m^\vee \dots y_{i+2}^\vee  \; \widetilde{y}_{i+1}^\vee \widetilde{y}_{i}^\vee \; y_{i-1}^\vee \dots y_1^\vee \\
&= -\gamma_\alpha(\widetilde{\mathbf{y}}) \frac{\alpha(y_i,y_{i+1})}{\alpha(\widetilde{y}_i,\widetilde{y}_{i+1})}\; y_m^\vee \dots y_{i+2}^\vee  \; y_{i+1}^\vee y_{i}^\vee \; y_{i-1}^\vee \dots y_1^\vee \\
&= -\gamma_\alpha(\mathbf{y}) \; y_m^\vee \dots y_1^\vee \\
&= -\overline{\eta}(x_{1} \otimes \dots \otimes x_{m}).
\end{align*}
Thus $\overline{\eta}$ is alternating, and hence descends to a well-defined surjective map $\eta$ as in the statement of the lemma. Finally, since both $\exterior V$ and $A^!$ have Hilbert series $(1+t)^n$, $\eta$ is an isomorphism of $\NN$-graded vector spaces.
\end{proof}

Pick an ordering on the elements of $X$, say $x_1 \prec x_2 \prec \dots \prec x_n$. Using $\eta$, we can choose a distinguished basis of $A^!$: if $I \subseteq X$, with $I = \{x_{i_1}, \dots, x_{i_m}\}$ and $x_{i_1} \prec x_{i_2} \prec \dots \prec x_{i_m}$, then we write $y_I = \eta(x_{i_1} \wedge \dots \wedge x_{i_m})$. By the previous lemma, $\{ y_I \mid I \subseteq X \}$ is a basis of $A^!$. We will also write $\phi_I \in (A_{|I|}^!)^*$ for the element which is dual to $y_I$.

\begin{proposition} \label{prop:KoszulComplex}
The differentials in the Koszul complex for $A$ are given by
\begin{gather*}
\partial_m : (A_m^!)^* \otimes A \to (A_{m-1}^!)^* \otimes A, \\ \partial_m(\phi_I \otimes a) = \sum_{j=1}^m (-1)^{m-j} \alpha\big(\nu(x_{i_1} \dots \widehat{x}_{i_j} \dots x_{i_m}), x_{I,j} \big)^{-1} \phi_{I\setminus\{x_{i_j}\}} \otimes x_{I,j}\, a,
\end{gather*}
where $x_{I,j} \coloneqq \Omega_m(x_{i_1}, \dots, \widehat{x}_{i_j}, \dots, x_{i_m}, x_{i_j})$ and $\widehat{x}_{i_j}$ indicates that we omit the element $x_{i_j}$. In particular, if $\alpha$ is trivial, then
\begin{align*}
\partial_m(\phi_I \otimes a) = \sum_{j=1}^m (-1)^{m-j} \phi_{I\setminus\{x_{i_j}\}} \otimes x_{I,j}\, a.
\end{align*}
\end{proposition}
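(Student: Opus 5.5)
The plan is to compute $\partial_m(\phi_I \otimes a) = \sum_{x \in X} \phi_I x^\vee \otimes x a$ directly from the definition of the Koszul complex. The only task is to identify, for each $x \in X$, the functional $\phi_I x^\vee \in (A^!_{m-1})^*$ in the basis $\{\phi_J\}$. By definition $(\phi_I x^\vee)(f') = \phi_I(x^\vee f')$. So we need to know, for each basis element $y_J$ with $|J| = m-1$, the coefficient of $y_I$ when $x^\vee y_J$ is expanded in the basis $\{ y_K \}$ of $A^!_m$.

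Write $J = I \setminus \{x_{i_j}\}$. First, express $y_J$ as a scalar multiple of a monomial. Take $\Pi_{m-1}(x_{i_1}, \dots, \widehat{x}_{i_j}, \dots, x_{i_m}) = (z_1, \dots, z_{m-1})$. Then
\begin{align*}
y_J = \gamma_\alpha(z_1,\dots,z_{m-1})\, z_{m-1}^\vee \cdots z_1^\vee.
\end{align*}
Multiplying on the left by $x^\vee$ gives $\gamma_\alpha(z)\, x^\vee z_{m-1}^\vee \cdots z_1^\vee$.

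Next, use bijectivity of $\Pi_m$, together with the recursive definition $\Pi_m(s_1,\dots,s_m) = (\Pi_{m-1}(s_1,\dots,s_{m-1}), \Omega_m(s_1,\dots,s_m))$. With first $m-1$ entries fixed as above, the tuple $(z_1,\dots,z_{m-1},x)$ equals $\Pi_m(x_{i_1},\dots,\widehat{x}_{i_j},\dots,x_{i_m}, s)$ for the unique $s$ with $\Omega_m(\dots, s) = x$. By the preceding lemma this monomial equals $\gamma_\alpha(z_1,\dots,z_{m-1},x)^{-1}\,\eta(x_{i_1}\wedge\cdots\widehat{x}_{i_j}\cdots\wedge x_{i_m}\wedge s)$.

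This is nonzero only if $s \notin J$, and it lies in the span of $y_I$ only if $s = x_{i_j}$, i.e.\ $x = x_{I,j}$. In that case, reordering the wedge to increasing order costs the sign $(-1)^{m-j}$. So for each $J = I\setminus\{x_{i_j}\}$ exactly one $x$ contributes, namely $x_{I,j}$. Its coefficient is
\begin{align*}
(-1)^{m-j}\,\frac{\gamma_\alpha(z_1,\dots,z_{m-1})}{\gamma_\alpha(z_1,\dots,z_{m-1},x_{I,j})} = (-1)^{m-j}\,\alpha\big(z_1\cdots z_{m-1}, x_{I,j}\big)^{-1},
\end{align*}
directly from the product formula defining $\gamma_\alpha$. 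Finally, $z_1\cdots z_{m-1} = \mu\,\Pi_{m-1}(\dots) = \nu(x_{i_1}\cdots\widehat{x}_{i_j}\cdots x_{i_m})$ by Proposition \ref{prop:IStructure}. This yields the stated formula, and setting $\alpha = 1$ gives the untwisted case.

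The main obstacle is bookkeeping of conventions, in two places. First, whether the pairing and the left action $\phi f_j$ really pick out left multiplication $x^\vee y_J$, given the reversed ordering $y_m^\vee\cdots y_1^\vee$ in $\eta$ and the convention for $\varphi$. Second, checking that the resulting sign is $(-1)^{m-j}$ rather than $(-1)^{j-1}$. I would resolve both by checking the case $m=1,2$ with trivial $r$ and trivial $\alpha$, where the complex must reduce to the usual Koszul complex of the polynomial ring. One must also confirm that nonzero products $x^\vee y_J$ with $s \notin I$ contribute nothing to the coefficient of $y_I$; this holds because $\eta$ is a bijection onto a basis indexed by subsets.
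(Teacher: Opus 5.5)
Your proposal is correct and is essentially the paper's proof. Both compute $\phi_I(x^\vee y_J)$ by writing $y_{I\setminus\{x_{i_j}\}} = \gamma_\alpha(z_1,\dots,z_{m-1})\,z_{m-1}^\vee\cdots z_1^\vee$, recognising $x_{I,j}^\vee z_{m-1}^\vee\cdots z_1^\vee$ as a scalar multiple of $\eta(x_{i_1}\wedge\cdots\wedge\widehat{x}_{i_j}\wedge\cdots\wedge x_{i_m}\wedge x_{i_j}) = (-1)^{m-j}y_I$, and reading off $\alpha(z_1\cdots z_{m-1},x_{I,j})^{-1}$ from the product formula for $\gamma_\alpha$; your use of the bijectivity of $\Pi_m$ for arbitrary $x$ just spells out the step that only $x = x_{I,j}$ contributes, which the paper asserts without detail.
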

\begin{proof}
By definition, we have
\begin{align*}
\partial_m(\phi_I \otimes a) = \sum_{x \in X} \phi_I(x^\vee -) \otimes xa,
\end{align*}
so we need to determine $\phi_I(x^\vee y_J)$ as $J$ varies over subsets of size $m-1$. 

Fix $I$ as in the statement. We first establish some notation. Write
\begin{align*}
\Pi_m(x_{i_1}, \dots, x_{i_m}) = (y_1, \dots, y_m) \quad \text{and} \quad \Pi_{m-1}(x_{i_1}, \dots, \widehat{x}_{i_j}, \dots, x_{i_m}) = (z_1, \dots, z_{m-1}).
\end{align*}
With $x_{I,j}$ as in the statement of the proposition, this means that $\Pi_m(x_{i_1}, \dots, \widehat{x}_{i_j}, \dots, x_{i_m},x_{i_j}) = (z_1, \dots, z_{m-1}, x_{I,j})$. Also write $z_{I,j} = z_1 \dots z_{m-1} = \nu(x_{i_1}, \dots, \widehat{x}_{i_j}, \dots, x_{i_m}) \in M$. We then have
\begin{align*}
\eta(x_{i_1} \wedge \dots \wedge \widehat{x}_{i_1} \wedge \dots \wedge x_{i_m} \wedge x_{i_j}) = (-1)^{m-j} \eta (x_{i_1} \wedge \dots \wedge x_{i_m}) = (-1)^{m-j} y_I.
\end{align*}
We can also calculate this as
\begin{align*}
\eta(x_{i_1} \wedge \dots \wedge \widehat{x}_{i_1} \wedge \dots \wedge x_{i_m} \wedge x_{i_j}) &= \gamma_\alpha(z_1, \dots, z_{m-1},x_{I,j}) \,x_{I,j}^\vee z_{m-1}^\vee \dots z_1^\vee \\
&= \gamma_\alpha(z_1, \dots, z_{m-1}) \alpha(z_{I,j}, x_{I,j}) \, x_{I,j}^\vee z_{m-1}^\vee \dots z_1^\vee \\
&= \alpha(z_{I,j}, x_{I,j})\, x_{I,j}^\vee \, y_{I\setminus\{x_{i_j}\}}
\end{align*}
Comparing these expressions gives
\begin{align*}
x_{I,j}^\vee \, y_{I\setminus\{x_{i_j}\}} = (-1)^{m-j} \alpha(z_{I,j}, x_{I,j})^{-1} y_I.
\end{align*}

Using this, we can determine $\phi_I(x^\vee y_J)$. Indeed, the above shows that this quantity is nonzero if and only if $J = I \setminus \{x_{i_j}\}$ and $x = x_{I,j}$ for some $1 \leqslant j \leqslant m$. In this case, we have
\begin{align*}
\phi_I(x^\vee y_J) 
= \phi_I(x_{I,j}^\vee y_{I \setminus \{x_{i_j}\}}) 
= \phi_I\big( (-1)^{m-j} \alpha(z_{I,j}, x_{I,j})^{-1} y_I \big)
= (-1)^{m-j} \alpha(z_{I,j}, x_{I,j})^{-1}
\end{align*}
so that $\phi_I(x^\vee -) = (-1)^{m-j} \alpha(z_{I,j}, x_{I,j})^{-1} \phi_{I \setminus \{x_{i_j}\}}$. Therefore
\begin{align*}
\partial_m(\phi_I \otimes a) 
= \sum_{x \in X} \phi_I(x^\vee -) \otimes xa
= \sum_{j=1}^m (-1)^{m-j} \alpha(z_{I,j}, x_{I,j})^{-1} \phi_{I \setminus \{x_{i_j}\}} \otimes x_{I,j}\, a,
\end{align*}
which is the formula claimed in the statement of the proposition.
\end{proof}

As a sanity check, let us compute these maps for $m=1$ and $m=2$ and verify that their composition is the zero map. When $m=1$, we have
\begin{align*}
\partial_1(\phi_{x_i} \otimes a) = x_i a, 
\end{align*}
after making the identification $\Bbbk \otimes A \cong A$. Moreover, if $I = \{x_i \prec x_j \}$ then
\begin{align*}
\partial_2 (\phi_{\{x_i,x_j\}} \otimes a) = -\alpha(x_j, x_j * x_i)^{-1} \phi_{x_j} \otimes (x_j * x_i) a + \alpha(x_i, x_i * x_j)^{-1} \phi_{x_i} \otimes (x_i * x_j) a.
\end{align*}
Composing these maps gives
\begin{align*}
\partial_1 \partial_2 (\phi_{\{x_i,x_j\}} \otimes a)
= \big(-\alpha(x_j, x_j * x_i)^{-1} x_j (x_j * x_i) + \alpha(x_i, x_i * x_j)^{-1} x_i (x_i * x_j) \big) a,
\end{align*}
which vanishes in $\Bbbk^\alpha M$.

\begin{example}
Let us compute the Koszul dual and Koszul complex for the algebra from Example \ref{ex:BuzagloExample}. By Lemma \ref{lem:KoszulDual}, the Koszul dual of $A$ has presentation
\begin{align*}
A^! = \frac{\Bbbk\langle x^\vee, y^\vee, z^\vee \rangle}{
\left\langle
\begin{array}{ccc}
x^\vee x^\vee + q^{-1} z^\vee y^\vee, & y^\vee y^\vee + q^{-1} x^\vee z^\vee, & z^\vee z^\vee + q^{-1} y^\vee x^\vee \\
x^\vee y^\vee, & y^\vee z^\vee, & z^\vee x^\vee 
\end{array}
\right\rangle
}.
\end{align*}

When calculating the maps in the Koszul complex, we shall write, for example, $\phi_{xy}$ instead of $\phi_{\{x,y\}}$ for the element of $(A_2^!)^*$ which is dual to $\eta(x \wedge y)$. In this notation, the discussion after Proposition \ref{prop:KoszulComplex} shows that
\begin{align*}
\partial_1(\phi_x \otimes a) = xa, \quad \partial_1(\phi_y \otimes a) = ya, \quad \partial_1(\phi_z \otimes a) = za
\end{align*}
and, for example,
\begin{align*}
\partial_2(\phi_{xy} \otimes a) 
&= - \alpha(\nu(y), y*x)^{-1} \phi_y \otimes (y*x) a + \alpha(\nu(x), x*y)^{-1} \phi_x \otimes (x*y) a \\
&= -\alpha(y,z)^{-1} \phi_y \otimes za + \alpha(x,x)^{-1} \phi_x \otimes xa \\
&= - \phi_y \otimes za - q^{-1} \phi_x \otimes xa.
\end{align*}
Similar calculations give
\begin{align*}
\partial_2(\phi_{xz}) = q^{-1} \phi_z \otimes za + \phi_x \otimes ya, \quad \partial_2(\phi_{yz}) = -\phi_z \otimes xa - q^{-1} \phi_y \otimes ya. 
\end{align*}
Finally, 
\begin{align*}
\partial_3(\phi_{xyz} \otimes a) 
&= \alpha(\nu(yz), \Omega_3(y,z,x))^{-1} \phi_{yz} \otimes \Omega_3(y,z,x) a \\
&\hspace{30pt}- \alpha(\nu(xz), \Omega_3(x,z,y))^{-1} \phi_{xz} \otimes \Omega_3(x,z,y) a \\
&\hspace{30pt}+ \alpha(\nu(xy), \Omega_3(x,y,z))^{-1} \phi_{xy} \otimes \Omega_3(x,y,z) a \\
&= \alpha(y^2,y)^{-1} \phi_{yz} \otimes y a - \alpha(xy,z)^{-1} \phi_{xz} \otimes z a - \alpha(x^2,x)^{-1} \phi_{xy} \otimes x a \\
&= -q^{-1} \phi_{yz} \otimes y a + q^{-1} \phi_{xz} \otimes z a - q^{-1} \phi_{xy} \otimes x a.
\end{align*}
In terms of matrices, these maps are given by
\begin{align*}
\partial_1 = 
\begin{pmatrix}
x & y & z
\end{pmatrix},
\qquad
\partial_2 = 
\begin{pmatrix}
-q^{-1} x & y & 0 \\
-z & 0 & -q^{-1} y \\
0 & q^{-1} z & -x
\end{pmatrix},
\qquad
\partial_3 = 
\begin{pmatrix}
-q^{-1} x \\ q^{-1}z \\ -q^{-1} y
\end{pmatrix},
\end{align*}
from which it is quick to verify that adjacent maps compose to give the zero map.
\end{example}


\section{Superpotentials for Structure Algebras}

Let $r$ be a solution to the Yang--Baxter equation, $M = M(X,r)$ its structure monoid, and $A = \Bbbk^\alpha M$ its (twisted) structure algebra. Since $A$ is AS regular and Koszul, is is necessarily a superpotential algebra by \cite[Theorem 11]{duboisviolette}. We now show that there is a simple formula for its defining superpotential.

It will be convenient to introduce some notation. Throughout, assume that we have fixed an ordering for $X$, and write $X = \{x_1, \dots, x_n\}$. If $\sigma \in S_n$, write
\begin{align}
y_i^\sigma &\coloneqq \Omega_i(x_{\sigma(1)}, \dots, x_{\sigma(i)}) \nonumber \\
&= \psi_{x_{\sigma(1)} \dots x_{\sigma(i-1)}}(x_{\sigma(i)}), \label{eqn:AltForyi}
\end{align}
where \eqref{eqn:AltForyi} follows from \cite[Lemma 4.5]{dehornoy}. We note that, by Lemma \ref{lem:PropertiesOfDelta}, we have
\begin{align*}
\Delta = y_1^\sigma y_2^\sigma \dots y_n^\sigma.
\end{align*}
We also write
\begin{align*}
\gamma_\alpha^\sigma = \alpha(y_1^\sigma, y_2^\sigma) \alpha(y_1^\sigma y_2^\sigma, y_3^\sigma) \dots \alpha(y_1^\sigma \dots y_{n-1}^\sigma, y_n^\sigma).
\end{align*}

We begin by establishing an important property of $y_i^\sigma$.

\begin{lemma}
In $M$, we have $y_i^\sigma y_{i+1}^\sigma = y_{i+1}^\sigma y_i^\sigma$ for $1 \leqslant i < n$. 
\end{lemma}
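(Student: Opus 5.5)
The plan is to reduce the commutation to a statement about the $I$-structure $\nu$, and then use left cancellation in $M$. Fix $\sigma$ and $i$, and write $a = x_{\sigma(1)} \cdots x_{\sigma(i-1)} \in \NN^X$, $s = x_{\sigma(i)}$ and $t = x_{\sigma(i+1)}$. By \eqref{eqn:AltForyi} together with Lemma \ref{lem:PsiToProduct}, we have $y_i^\sigma = \psi_a(s)$ and $y_{i+1}^\sigma = \psi_{as}(t)$. We also have $\nu(a) = y_1^\sigma \cdots y_{i-1}^\sigma$, and hence
\begin{align*}
\nu(a)\, y_i^\sigma y_{i+1}^\sigma = \nu(ast).
\end{align*}
Since $M$ is left-cancellative, it is enough to show that
\begin{align*}
\nu(a)\, y_{i+1}^\sigma y_i^\sigma = \nu(ast).
\end{align*}

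\textbf{Step 1.} Rewrite $\nu(a)\, y_{i+1}^\sigma$ as a value of $\nu$. Because $\psi_a$ is a bijection, there is a unique $t' \in X$ with $\psi_a(t') = y_{i+1}^\sigma = \psi_{as}(t)$. This gives $\nu(a)\, y_{i+1}^\sigma = \nu(at')$. Then
\begin{align*}
\nu(a)\, y_{i+1}^\sigma y_i^\sigma = \nu(at')\, \psi_a(s),
\end{align*}
and this is of the form $\nu(at's')$ with $s' = \psi_{at'}^{-1}(\psi_a(s))$. By injectivity of $\nu$, the required identity becomes the equality $at's' = ast$ in the free abelian monoid $\NN^X$.

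\textbf{Step 2.} Verify the combinatorial identity $\{t', s'\} = \{s, t\}$ as multisets, which is equivalent to $at's' = ast$. The main tool is Lemma \ref{lem:OmegaAndPiProp}. Part (1) says $\Omega_{i+1}$ is symmetric in its first $i$ arguments, and part (2) says $\Pi_{i+1}$ is symmetric. Concretely, $t'$ is determined by
\begin{align*}
\Omega_i(x_{\sigma(1)}, \dots, x_{\sigma(i-1)}, t') = \Omega_{i+1}(x_{\sigma(1)}, \dots, x_{\sigma(i-1)}, s, t).
\end{align*}
I would unwind both sides with the recursive definition of $\Omega$ and the cycle set identity \eqref{eqn:CycleSetIdentity2}, namely $L_{x*y} \circ L_x = L_{y*x} \circ L_y$, applied to the cycle set obtained by conjugating with $\psi_a$. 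In the base case $i = 1$, so that $a = 1$, the claim reads $s(s*t) = (s*t)s$ in $M$. This is the relation $s(s*t) = t(t*s)$ of \eqref{eqn:StructureMonoidAlt}, so the base case amounts to comparing $t(t*s)$ with $(s*t)s$ via the cycle set axioms. After that I would induct on $i$ by absorbing $x_{\sigma(1)}$ into the cycle set structure via $L_{x_{\sigma(1)}}$.

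\textbf{Main obstacle.} Step 2 is the crux. It requires matching $t' = \psi_a^{-1}\psi_{as}(t)$ with $t$, or with $s$, using only Lemma \ref{lem:PsiToProduct} and the cycle set identity. I would first test the identity on small cycle sets, such as the examples above (where in fact $y_i^\sigma = y_{i+1}^\sigma$ in many cases), to see which of $t' = t$ or $t' = s$ is the relevant case. Then I would try to derive it from $\psi_{ast} = \psi_{ats}$, which holds because $\NN^X$ is commutative, combined with Lemma \ref{lem:PsiToProduct}.
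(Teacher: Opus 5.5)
\textbf{The gap cannot be closed: the statement is false.} The multiset identity $\{t',s'\}=\{s,t\}$ that Step~2 aims for fails in general, and so does the lemma. Take the cycle set of Example~\ref{ex:BuzagloExample}: $X=\{x,y,z\}$ with $L_a=(x\;z\;y)$ for all $a$, so that $M=\langle x,y,z\mid x^2=yz,\ y^2=zx,\ z^2=xy\rangle$. Let $(x_{\sigma(1)},x_{\sigma(2)},x_{\sigma(3)})=(x,z,y)$ and $i=1$. Then $y_1^\sigma=x$ and $y_2^\sigma=x*z=y$, so the lemma claims $xy=yx$ in $M$.

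All relations are between words of length $2$. So two words of length $2$ are equal in $M$ only if they are joined by a chain of defining relations. Here $xy$ is equal only to $z^2$, while $yx$ appears in no relation (it corresponds to a fixed point of $r$). Hence $xy\neq yx$.

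In your notation this example has $a=1$, $s=x$, $t=z$, $t'=y$ and $s'=\psi_y^{-1}(x)=y$. So $at's'=y^2\neq xz=ast$. Note that $t'$ is neither $s$ nor $t$, so neither branch of the case analysis in your ``main obstacle'' can hold. Your base case $s(s*t)=(s*t)s$ already fails here. It also fails for the first cycle set in the table at the end of the paper: $s=x$, $t=z$ gives $xz=zy\neq zx$.

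\textbf{What is true.} The correct identity, and the one Proposition~\ref{prop:Superpotential} actually needs, involves the swapped permutation $\sigma'=\sigma\circ(i\;i+1)$:
\[
y_i^\sigma y_{i+1}^\sigma = y_i^{\sigma'} y_{i+1}^{\sigma'}.
\]
Your own framework proves this in one line, using the commutativity of $\NN^X$ that you mention at the end:
\[
\nu(a)\,y_i^\sigma y_{i+1}^\sigma=\nu(ast)=\nu(ats)=\nu(a)\,\psi_a(t)\,\psi_{at}(s)=\nu(a)\,y_i^{\sigma'}y_{i+1}^{\sigma'},
\]
and then you cancel $\nu(a)$ on the left.

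The paper's proof is the direct cycle-set version of the same fact. Set $u=\Omega_i(x_{\sigma(1)},\dots,x_{\sigma(i)})$ and $v=\Omega_i(x_{\sigma(1)},\dots,x_{\sigma(i-1)},x_{\sigma(i+1)})$. The paper shows $y_i^\sigma y_{i+1}^\sigma=u(u*v)=v(v*u)$ via \eqref{eqn:StructureMonoidAlt}. But $v(v*u)=y_i^{\sigma'}y_{i+1}^{\sigma'}$, and the paper's final identification of this with $y_{i+1}^\sigma y_i^\sigma$ is the same error as in the statement.

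Accordingly, in the proof of Proposition~\ref{prop:Superpotential} one should write $\sfw_{\sigma'}=p\otimes y_i^{\sigma'}\otimes y_{i+1}^{\sigma'}\otimes q$. Since $r(u,u*v)=(v,v*u)$, the bracketed term there is then a genuine element of $\mathcal{R}$, and that argument goes through.
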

\begin{proof}
Using the definition of $\Omega_i$, we obtain
\begin{align*}
y_i^\sigma y_{i+1}^\sigma 
&= \Omega_i(x_{\sigma(1)}, \dots, x_{\sigma(i)}) \Omega_{i+1}(x_{\sigma(1)}, \dots, x_{\sigma(i)}, x_{\sigma(i+1)}) \\
&= \Omega_i(x_{\sigma(1)}, \dots, x_{\sigma(i)}) \big( \Omega_{i}(x_{\sigma(1)}, \dots, x_{\sigma(i)}) * \Omega_{i}(x_{\sigma(1)}, \dots, x_{\sigma(i-1)}, x_{\sigma(i+1)}) \\
&= \Omega_{i}(x_{\sigma(1)}, \dots, x_{\sigma(i-1)}, x_{\sigma(i+1)})  \big( \Omega_{i}(x_{\sigma(1)}, \dots, x_{\sigma(i-1)},x_{\sigma(i+1)}) * \Omega_i(x_{\sigma(1)}, \dots, x_{\sigma(i)}) \big) \\
&= \Omega_{i}(x_{\sigma(1)}, \dots, x_{\sigma(i-1), x_{\sigma(i+1)}} \Omega_{i+1}(x_{\sigma(1)}, \dots, x_{\sigma(i-1)}, x_{\sigma(i+1)}, x_{\sigma(i)}\big) \\
&= y_{i+1}^\sigma y_i^\sigma,
\end{align*}
where the equality on the third line uses the fact that the relations in $M$ may be presented as in \eqref{eqn:StructureMonoidAlt}.
\end{proof}

We are now in a position to determine a twisted superpotential $\sfw_\alpha$ which defines $A$. The following result shows that $\sfw_\alpha$ can be obtained as a suitable linear combination of all of the factorisations of the Garside element $\Delta$.

\begin{proposition} \label{prop:Superpotential}
If $\sigma \in S_n$, write $\sfw_\sigma = y_1^\sigma \otimes \dots \otimes y_n^\sigma$. A twisted superpotential $\sfw_\alpha$ which defines $A$, in the sense that $A \cong \mathscr{D}(\sfw_\alpha,n-2)$, is given by
\begin{align*}
\sfw_\alpha = \sum_{\sigma \in S_n} \frac{\operatorname{sgn} \sigma}{\gamma_\alpha^\sigma} \sfw_\sigma = \sum_{\sigma \in S_n} \frac{\operatorname{sgn} \sigma}{\gamma_\alpha^\sigma} y_1^\sigma \otimes \dots \otimes y_n^\sigma.
\end{align*}
\end{proposition}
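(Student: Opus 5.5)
The plan is to identify $\sfw_\alpha$ with a spanning vector of the one-dimensional space
\begin{align*}
W \coloneqq \bigcap_{i=0}^{n-2} V^{\otimes i} \otimes \mathcal{R} \otimes V^{\otimes (n-2-i)} \subseteq V^{\otimes n}.
\end{align*}
Here $\mathcal{R}$ is the space of quadratic relations of $A$. Since $A$ is Koszul, $W \cong (A^!_n)^*$, and this is one-dimensional because $\hilb_{A^!}(t) = (1+t)^n$. By the theory behind \cite[Theorem 11]{duboisviolette} (in the formulation of Mori--Smith), a nonzero element of $W$ is a twisted superpotential $\sfw$ with $A \cong \mathscr{D}(\sfw,n-2)$. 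So it suffices to show two things: $\sfw_\alpha \neq 0$, and $\sfw_\alpha \in V^{\otimes i}\otimes\mathcal{R}\otimes V^{\otimes(n-2-i)}$ for every $i$.

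\textbf{Nonvanishing.} This is immediate from Lemma \ref{lem:PropertiesOfDelta}(2) together with bijectivity of $\Pi_n$. Distinct $\sigma$ give distinct tuples $(y_1^\sigma,\dots,y_n^\sigma)$, so the tensors $\sfw_\sigma$ are distinct basis vectors of $V^{\otimes n}$. Every coefficient $\operatorname{sgn}\sigma/\gamma^\sigma_\alpha$ is nonzero, hence $\sfw_\alpha \neq 0$.

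\textbf{Membership in $V^{\otimes (i-1)}\otimes\mathcal{R}\otimes V^{\otimes(n-i-1)}$.} Fix $1\leqslant i<n$ and partition $S_n$ into pairs $\{\sigma, \sigma'\}$ with $\sigma' = \sigma\circ(i\;i+1)$.
\begin{enumerate}[{\normalfont (a)},topsep=1pt,itemsep=1pt,leftmargin=35pt]
\item By the proof of \cite[Lemma 2.6]{dehornoy}, as used in the proof that $\eta$ is well defined, the tuples $\Pi_n$ of $\sigma$ and $\sigma'$ agree except in positions $i,i+1$. In those positions $y_i^\sigma y_{i+1}^\sigma = y_i^{\sigma'}y_{i+1}^{\sigma'}$ in $M$, and the two pairs are distinct by injectivity of $\Pi_n$. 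Hence $r(y_i^\sigma,y_{i+1}^\sigma) = (y_i^{\sigma'},y_{i+1}^{\sigma'})$, since the defining relations of $M$ equate only $r$-related pairs of words of length two.
\item By Lemma \ref{lem:GammaProperty}, $\gamma_\alpha^{\sigma}/\gamma_\alpha^{\sigma'} = \alpha(y_i^\sigma,y_{i+1}^\sigma)/\alpha(y_i^{\sigma'},y_{i+1}^{\sigma'})$. Also $\operatorname{sgn}\sigma' = -\operatorname{sgn}\sigma$.
\item Therefore the pair contributes
\begin{align*}
\frac{\operatorname{sgn}\sigma}{\gamma_\alpha^\sigma}\, y_1^\sigma\otimes\dots\otimes \Big(y_i^\sigma\otimes y_{i+1}^\sigma - \tfrac{\alpha(y_i^\sigma,y_{i+1}^\sigma)}{\alpha(y_i^{\sigma'},y_{i+1}^{\sigma'})}\, y_i^{\sigma'}\otimes y_{i+1}^{\sigma'}\Big)\otimes\dots\otimes y_n^\sigma .
\end{align*}
The middle factor is exactly a defining relation of $\Bbbk^\alpha M$, so this sum lies in $V^{\otimes(i-1)}\otimes\mathcal{R}\otimes V^{\otimes(n-i-1)}$.
\end{enumerate}
Summing over all pairs gives the required membership, and hence $\sfw_\alpha$ spans $W$.

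\textbf{Main obstacle.} The delicate part is the bookkeeping of conventions. The paper's notion of $\theta$-twisted superpotential, the order of left differentiation in $\partial^{n-2}$, and the reversed pairing $\varphi$ all need to match the cited theorem, so that "spans $W$" really yields $A \cong \mathscr{D}(\sfw_\alpha,n-2)$ under the paper's definitions. If the general theory is not directly quotable in these conventions, I would verify the twisted condition by hand. Lemma \ref{lem:PropertiesOfDelta}(3) gives $x\Delta = \Delta\psi_\delta(x)$, so a factorisation $\Delta = y_1y_2\cdots y_n$ yields another factorisation $\Delta = y_2\cdots y_n\,\psi_\delta^{-1}(y_1)$, after cancelling on the left in $\Delta\,\psi_\delta(\cdot)$. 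This shows that cyclic rotation followed by applying $\theta = \psi_\delta^{\pm1}$ in the last slot permutes the set $\{\sfw_\sigma\}$. The coefficients then transform by a uniform scalar factor, which is to be absorbed into the twist $\theta$ (for example by rescaling it by the ratios $\alpha(x,\Delta)/\alpha(\Delta,x)$). Checking that this scalar is indeed uniform is again a computation with the cocycle identity, in the spirit of Lemma \ref{lem:GammaProperty}.
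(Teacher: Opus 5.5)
Your proposal is correct and follows the paper's route. You invoke Mori--Smith to reduce to showing $\sfw_\alpha \in \bigcap_i W_i$, then pair $\sigma$ with $\sigma\circ(i\;i+1)$ and use Lemma \ref{lem:GammaProperty} to see that each pair contributes a defining relation of $\Bbbk^\alpha M$ in slots $i,i+1$. Your version is in fact slightly more careful than the printed one: you check $\sfw_\alpha\neq 0$, and you correctly take the middle entries of $\sfw_{\sigma'}$ to be $r(y_i^\sigma,y_{i+1}^\sigma)=(y_i^{\sigma'},y_{i+1}^{\sigma'})$, where the paper writes $(y_{i+1}^\sigma,y_i^\sigma)$. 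The only slip is in your optional fallback, where the rotated factorisation should be $\Delta = y_2\cdots y_n\,\psi_\delta(y_1)$, not $\psi_\delta^{-1}(y_1)$.
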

\begin{proof}
Write $V = \spann_\Bbbk X$ and 
\begin{align*}
\mathcal{R} = \spann_\Bbbk \bigg\{ x \otimes y - \frac{\alpha(x,y)}{\alpha(u,v)}u \otimes v \;\bigg|\; r(x,y) = (u,v) \text{ where } x,y,u,v \in X \bigg\} \subseteq V^{\otimes 2},
\end{align*}
so that $A = T(V)/\langle \mathcal{R} \rangle$. For $1 \leqslant i \leqslant n-1$, write
\begin{align*}
W_i \coloneqq V^{\otimes i-1} \otimes \mathcal{R} \otimes V^{\otimes (n-1-i)} \subseteq V^{\otimes n}.
\end{align*}
By \cite[Proposition 2.12]{morismith}, $\sfw_\alpha$ is a basis for the one-dimensional space $\bigcap_{i=1}^{n-1} W_i \subseteq V^{\otimes n}$, so it suffices to show that $\sfw_\alpha \in W_i$ for all $1 \leqslant i \leqslant n-1$.

Fix $i \in \{1, \dots, n-1\}$ and let $\sigma \in S_n$ be an even permutation. Note that if we define  $\sigma' \coloneqq \sigma (i\;i+1)$, then $\sgn \sigma' = -\sgn \sigma = -1$. Writing
\begin{gather*}
p \coloneqq y_1^\sigma \otimes \dots \otimes y_{i-1}^\sigma, \quad
q \coloneqq y_{i+2}^\sigma \otimes \dots \otimes y_{n}^\sigma, \quad
q' \coloneqq y_{i+2}^{\sigma'} \otimes \dots \otimes y_{n}^{\sigma'},
\end{gather*}
we have
\begin{gather*}
\sfw_\sigma = p \otimes y_i^\sigma \otimes y_{i+1}^\sigma \otimes q, \quad
\sfw_{\sigma'} = p \otimes y_{i+1}^\sigma \otimes y_{i}^\sigma \otimes q'.
\end{gather*}
We note that $q=q'$ by Lemma \ref{lem:OmegaAndPiProp} (1). Therefore
\begin{align*}
\frac{1}{\gamma_\alpha^\sigma} \sfw_\sigma - \frac{1}{\gamma_\alpha^{\sigma'}} \sfw_{\sigma'}
&= p \otimes \bigg( \frac{1}{\gamma_\alpha^\sigma} y_i^\sigma \otimes y_{i+1}^\sigma - \frac{1}{\gamma_\alpha^{\sigma'}} y_{i+1}^\sigma \otimes y_{i}^\sigma \bigg) \otimes q \\
&= \frac{1}{\gamma_\alpha^\sigma} p \otimes \bigg( y_i^\sigma \otimes y_{i+1}^\sigma - \frac{\alpha(y_i^\sigma,y_{i+1}^\sigma)}{\alpha(y_{i+1}^\sigma,y_{i}^\sigma)} y_{i+1}^\sigma \otimes y_{i}^\sigma \bigg) \otimes q
\in W_i,
\end{align*}
where the last equality uses Lemma \ref{lem:GammaProperty}. Summing over all even permutations $\sigma \in S_n$ gives
\begin{align*}
W_i \ni \sum_{\substack{\sigma \in S_n\\ \sgn \sigma = 1}} \!\!\bigg( \frac{1}{\gamma_\alpha^\sigma} \sfw_\sigma - \frac{1}{\gamma_\alpha^{\sigma'}} \sfw_{\sigma'} \bigg) = \sum_{\sigma \in S_n} \frac{\sgn \sigma}{\gamma_\alpha^\sigma} \sfw_\sigma = \sfw_\alpha,
\end{align*}
as desired.
\end{proof}

It remains to identify the twist, i.e.~the map $\theta \in \GL(V)$ which satisfies $({\id} \otimes \theta) \phi (\sfw_\alpha) = \sfw_\alpha$, where
\begin{align*}
\phi: V^{\otimes m} \to V^{\otimes m}, \qquad \phi( v_1 \otimes v_2 \otimes \dots \otimes v_{m-1} \otimes v_{m}) = v_2 \otimes v_3 \otimes \dots \otimes v_{m} \otimes v_{1}.
\end{align*}

\begin{proposition} \label{prop:Twist}
The twist $\theta \in \GL(V)$ for the twisted superpotential $\sfw_\alpha$ is
\begin{align*}
\theta(x) = (-1)^{n-1} \sgn \psi_x \frac{\alpha(x,\Delta)}{\alpha(\Delta,\psi_\delta(x))} \psi_\delta(x).
\end{align*}
\end{proposition}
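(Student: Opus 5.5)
The plan is to compare coefficients term by term. We need $(\id^{\otimes(n-1)}\otimes\theta)\phi(\sfw_\alpha)=\sfw_\alpha$. Write $\sfw_\alpha=\sum_\sigma c_\sigma\,y_1^\sigma\otimes\dots\otimes y_n^\sigma$ with $c_\sigma=\sgn\sigma/\gamma_\alpha^\sigma$. Then
\[
\phi(\sfw_\sigma)=y_2^\sigma\otimes\dots\otimes y_n^\sigma\otimes y_1^\sigma .
\]
Since $y_1^\sigma=x_{\sigma(1)}$, applying $\theta$ in the last slot gives a scalar multiple of $y_2^\sigma\otimes\dots\otimes y_n^\sigma\otimes\psi_\delta(y_1^\sigma)$. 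By Lemma \ref{lem:PropertiesOfDelta}(3), $y_1^\sigma\Delta=\Delta\psi_\delta(y_1^\sigma)$. Cancelling $y_1^\sigma$ on the left of $y_1^\sigma y_2^\sigma\cdots y_n^\sigma\,\psi_\delta(y_1^\sigma)$ shows that $y_2^\sigma\cdots y_n^\sigma\,\psi_\delta(y_1^\sigma)=\Delta$ in $M$. So by Lemma \ref{lem:PropertiesOfDelta}(2) this word equals $\sfw_\tau$ for a unique $\tau\in S_n$.

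The first step is to identify $\tau$. Use $y_i^\sigma=\psi_{x_{\sigma(1)}\cdots x_{\sigma(i-1)}}(x_{\sigma(i)})$ together with $\psi_{ab}=\psi_{\psi_a(b)}\circ\psi_a$ from Lemma \ref{lem:PsiToProduct}. The claim is that $\tau$ is the sequence
\[
\bigl(\psi_x(x_{\sigma(2)}),\dots,\psi_x(x_{\sigma(n)}),\psi_x(x)\bigr),\qquad x=x_{\sigma(1)} .
\]
This should follow since $\nu(x\,b)=x\,\nu(\psi_x(b))$ with $\psi_x$ applied componentwise, and $\Pi$ behaves compatibly. Concretely, $\Pi_{n}(\psi_x(x_{\sigma(2)}),\dots)$ reproduces $(y_2^\sigma,\dots,y_n^\sigma)$, and the last entry is forced. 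As a permutation of $X$, $\tau$ is then $\psi_x\circ\sigma\circ c$, where $c$ is the $n$-cycle shifting positions. Hence
\[
\sgn\tau=\sgn\psi_x\cdot(-1)^{n-1}\cdot\sgn\sigma ,
\]
which produces the factor $(-1)^{n-1}\sgn\psi_x$.

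The second step is to compare the cocycle factors, i.e.\ to show
\[
\frac{\gamma_\alpha^\tau}{\gamma_\alpha^\sigma}=\frac{\alpha(\Delta,\psi_\delta(x))}{\alpha(x,\Delta)}
\qquad\text{with } x=y_1^\sigma .
\]
Write $z=y_2^\sigma\cdots y_n^\sigma$, so that $xz=\Delta=z\,\psi_\delta(x)$. Telescoping the cocycle identity along prefixes gives $\gamma_\alpha(x,y_2,\dots,y_n)=\alpha(x,z)\,\gamma_\alpha(y_2,\dots,y_n)$; this is the standard fact that $\gamma_\alpha(w_1,\dots,w_m)$ is the coefficient of the product of $w_1,\dots,w_m$ in $\Bbbk^\alpha M$, so associativity splits it. Similarly $\gamma_\alpha(y_2,\dots,y_n,\psi_\delta(x))=\gamma_\alpha(y_2,\dots,y_n)\,\alpha(z,\psi_\delta(x))$. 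The ratio is therefore $\alpha(z,\psi_\delta(x))/\alpha(x,z)$. It remains to convert this to $\alpha(\Delta,\psi_\delta(x))/\alpha(x,\Delta)$. Apply the cocycle identity to the triples $(x,z,\psi_\delta(x))$:
\[
\alpha(x,z)\,\alpha(\Delta,\psi_\delta(x))=\alpha(x,\Delta\psi_\delta(x))\,\alpha(z,\psi_\delta(x)),
\]
using $xz\psi_\delta(x)=x\Delta$. So $\alpha(z,\psi_\delta(x))/\alpha(x,z)=\alpha(\Delta,\psi_\delta(x))/\alpha(x,x\Delta)$. Doing the same with the bracketing $(x,z)$ versus $(x,\Delta)$, i.e.\ using $\alpha(x,\Delta)$ directly, should replace $\alpha(x,x\Delta)$ by $\alpha(x,\Delta)$. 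I expect this bookkeeping to be the fiddly point; the safest route is to express everything via the associativity interpretation of $\gamma_\alpha$ and to compute the product $x\star z\star\psi_\delta(x)$ in two ways.

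Combining the two steps, the coefficient of $\sfw_\tau$ in $(\id\otimes\theta)\phi(\sfw_\alpha)$ equals $c_\tau$. Since $\sigma\mapsto\tau$ is a bijection of $S_n$, the identity holds. Because $\sfw_\alpha$ spans $\bigcap_i W_i$ and the twist is determined by $\sfw_\alpha$, this $\theta$ is the twist. The main obstacle I anticipate is verifying that $\tau$ has the claimed form, so that its sign is exactly $(-1)^{n-1}\sgn\psi_x\sgn\sigma$. I would first check this via the $I$-structure identity $\nu(xb)=x\,\nu(\psi_x(b))$ and the symmetry of $\Pi_n$ (Lemma \ref{lem:OmegaAndPiProp}).
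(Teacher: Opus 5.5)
Your plan is the paper's proof. The paper also sets $x=y_1^\sigma$ and $z=y_2^\sigma\cdots y_n^\sigma$, and gets $\Delta=z\psi_\delta(x)$ from Lemma~\ref{lem:PropertiesOfDelta}(3). It uses the same $\tau$, written $\pi_x\circ\sigma\circ(1\;2\;\cdots\;n)$ with $\psi_x(x_i)=x_{\pi_x(i)}$, to produce $(-1)^{n-1}\sgn\psi_x$. It then telescopes to $\gamma_\alpha^\sigma/\gamma_\alpha^\tau=\alpha(x,z)/\alpha(z,\psi_\delta(x))$; your associativity-in-$\Bbbk^\alpha M$ phrasing of this is fine.

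The last step, however, is wrong as written. The cocycle identity for the triple $(x,z,\psi_\delta(x))$ is
\[
\alpha(x,z)\,\alpha\bigl(xz,\psi_\delta(x)\bigr)=\alpha\bigl(x,z\psi_\delta(x)\bigr)\,\alpha\bigl(z,\psi_\delta(x)\bigr).
\]
You have already shown $xz=\Delta=z\psi_\delta(x)$, so this reads $\alpha(x,z)\,\alpha(\Delta,\psi_\delta(x))=\alpha(x,\Delta)\,\alpha(z,\psi_\delta(x))$. That is exactly the required $\alpha(z,\psi_\delta(x))/\alpha(x,z)=\alpha(\Delta,\psi_\delta(x))/\alpha(x,\Delta)$. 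The terms $\alpha(x,\Delta\psi_\delta(x))$ and $\alpha(x,x\Delta)$ come from replacing $z\psi_\delta(x)$ by $\Delta\psi_\delta(x)$ instead of by $\Delta$. They never arise, so no further rebracketing is needed; this one-line identity is how the paper finishes.

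Two smaller points. First, for identifying $\tau$, your first suggestion is the cleanest route. With $b=x_{\sigma(2)}\cdots x_{\sigma(j)}$, \eqref{eqn:AltForyi} and Lemma~\ref{lem:PsiToProduct} give
\[
y_{j+1}^\sigma=\psi_{xb}(x_{\sigma(j+1)})=\psi_{\psi_x(b)}\bigl(\psi_x(x_{\sigma(j+1)})\bigr)=y_j^\tau .
\]
Taking $b=x_{\sigma(2)}\cdots x_{\sigma(n)}$, so that $xb=\delta$, gives $y_n^\tau=\psi_{xb}(x)=\psi_\delta(x)$ directly. The paper instead compares $\nu(x_{\sigma(1)}\cdots x_{\sigma(j+1)})=x\,\nu(\psi_x(x_{\sigma(2)}\cdots x_{\sigma(j+1)}))$ and left-cancels.

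Second, you argue in the opposite direction to the paper. You verify that the stated $\theta$ works and then appeal to uniqueness; the paper takes existence of the twist for granted and reads $\lambda_x$ off a single term. Your direction needs two facts you only asserted:
\begin{enumerate}[{\normalfont (1)},topsep=1pt,itemsep=1pt,leftmargin=35pt]
\item $\sigma\mapsto\tau$ is a bijection of $S_n$;
\item $\theta$ is determined by $\sfw_\alpha$.
\end{enumerate}
Both follow in a line from cancellativity of $M$, since a factorisation of $\Delta$ is determined by its first $n-1$ letters and also by its last $n-1$ letters. Combine this with Lemma~\ref{lem:PropertiesOfDelta}(2) and the fact that every $x\in X$ occurs as some $y_1^\sigma$. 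State these explicitly; with the corrected cocycle step the proof is complete.
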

\begin{proof}
Fix $\sigma \in S_n$ and consider the term $\frac{\sgn \sigma}{\gamma_\alpha^\sigma} y_1^\sigma \otimes \dots \otimes y_n^\sigma$ in $\sfw_\alpha$. We first identify a permutation $\tau \in S_n$ for which 
\begin{align*}
\phi(y_1^\sigma \otimes \dots \otimes y_n^\sigma) = y_1^\tau \otimes \dots \otimes y_{n-1}^\tau \otimes y,
\end{align*}
for some $y \in X$; the precise value of $y$, which will emerge during the proof, determines the definition of $\theta$. Write $x = x_{\sigma(1)} = y_1^\sigma$ and $z^\sigma = y_2^\sigma \dots y_n^\sigma \in M$. Using Lemma \ref{lem:PropertiesOfDelta} (3), we have
\begin{align*}
x \Delta = \Delta \psi_\delta(x) = x z^\sigma \psi_\delta(x),
\end{align*}
so $\Delta = z^\sigma \psi_\delta(x)$ since $M$ is cancellative. Write $\pi_x \in S_n$ for the permutation satisfying $\psi_x(x_i) = x_{\pi_x(i)}$. We now work towards showing that the desired permutation $\tau$ is given by
\begin{align}
\tau = \pi_x \circ \sigma \circ (1\;2\; \dots \;n). \label{eqn:DefinitionOfTau}
\end{align}
We note that this permutation has the property that
\begin{align*}
x_{\tau(i)} = \psi_x(x_{\sigma(i+1)}) \quad \text{for } 1 \leqslant i \leqslant n,
\end{align*}
where we interpret subscripts modulo $n$, where necessary.

We claim that 
\begin{align}
y_j^\tau = y_{j+1}^\sigma \quad \text{for} \quad 1 \leqslant j \leqslant n-1 \quad \text{and} \quad y_n^\tau = \psi_\delta(x), \label{eqn:yjTau}
\end{align}
which is precisely the property of $\tau$ that we want. Indeed, if $1 \leqslant j \leqslant n-1$, then
\begin{align*}
x y_2^\sigma \dots y_{j+1}^\sigma
&= y_1^\sigma y_2^\sigma \dots y_n^\sigma \\
&= \nu(x_{\sigma(1)} \dots x_{\sigma(j+1)}) \\
&= x_{\sigma(1)} \nu (\psi_x(x_{\sigma(2)} \dots x_{\sigma(j+1)})) \\
&= x_{\sigma(1)} \nu (x_{\tau(1)} \dots x_{\tau(j)}) \\
&= x y_1^\tau \dots y_{j}^\tau.
\end{align*}
Since $M$ is cancellative, we deduce that
\begin{align*}
y_2^\sigma \dots y_{j+1}^\sigma = y_1^\tau \dots y_j^\tau
\end{align*}
for all $1 \leqslant j \leqslant n-1$. Induction, using left cancellation again, shows that $y_j^\tau = y_{j+1}^\sigma$ for all $1 \leqslant j \leqslant n-1$. Using this, we also have
\begin{align*}
y_1^\tau \dots y_{n-1}^\tau y_n^\tau = \nu(x_{\tau(1)} \dots x_{\tau(n)}) = \nu(\delta) = \Delta = z^\sigma \psi_\delta(x) = y_2^\sigma \dots y_n^\sigma \psi_\delta(x) = y_1^\tau \dots y_{n-1}^\tau \psi_\delta(x),
\end{align*}
and cancellation gives $y_n^\tau = \psi_\delta(x)$, establishing \eqref{eqn:yjTau}.

Since $({\id} \otimes \theta)\phi(\sfw) = \sfw$ and
\begin{align*}
({\id} \otimes \theta)\phi\Big( \frac{\sgn \sigma}{\gamma_\alpha^\sigma} x \otimes y_2^\sigma \otimes \dots \otimes y_n^\sigma \Big) = \frac{\sgn \sigma}{\gamma_\alpha^\sigma} y_2^\sigma \otimes \dots \otimes y_n^\sigma \otimes \theta(x) = \frac{\sgn \sigma}{\gamma_\alpha^\sigma} y_1^\tau \otimes \dots \otimes y_{n-1}^\tau \otimes \theta(x),
\end{align*}
and $y_n^\tau = \psi_\delta(x)$, we must have $\theta(x) = \lambda_x \psi_\delta(x)$ for some $\lambda_x \in \Bbbk$ which we need to determine. From the above, $\lambda_x$ satisfies
\begin{align*}
\frac{\sgn \sigma}{\gamma_\alpha^\sigma} \lambda_x = \frac{\sgn \tau}{\gamma_\alpha^\tau}.
\end{align*}
Rearranging and using \eqref{eqn:DefinitionOfTau} gives
\begin{align*}
\lambda_x = \frac{\sgn \tau}{\sgn \sigma}\;\frac{\gamma_\alpha^\sigma}{\gamma_\alpha^\tau} = (-1)^{n-1} \sgn \psi_x \; \frac{\gamma_\alpha^\sigma}{\gamma_\alpha^\tau}.
\end{align*}

It remains to simplify this expression into the form given in the statement of the proposition. Write
\begin{align*}
z_j^\sigma = y_2^\sigma \dots y_j^\sigma, 1 \leqslant j \leqslant n,
\end{align*}
with the understanding that $z_1^\sigma = 1$, and so that $z_n^\sigma = z^\sigma$. By the cocycle identity, we have
\begin{align*}
\alpha(x,z_j^\sigma) \alpha(x z_j^\sigma, y_{j+1}^\sigma) = \alpha(x, z_j^\sigma y_{j+1}^\sigma) \alpha(z_j^\sigma, y_{j+1}^\sigma) = \alpha(x, z_{j+1}^\sigma) \alpha(z_j^\sigma, y_{j+1}^\sigma).
\end{align*}
Using this, we calculate
\begin{align*}
\gamma_\alpha^\sigma 
&= \prod_{j=1}^{n-1} \alpha( y_1^\sigma \dots y_j^\sigma, y_{j+1}^\sigma) 
= \prod_{j=1}^{n-1} \alpha( x z_j^\sigma, y_{j+1}^\sigma) \\
&= \prod_{j=1}^{n-1} \frac{\alpha( x, z_{j+1}^\sigma)}{\alpha(x,z_j^\sigma)} \alpha(z_j^\sigma, y_{j+1}^\sigma) 
= \frac{\alpha( x, z_{n}^\sigma)}{\alpha(x,z_1^\sigma)} \prod_{j=1}^{n-1} \alpha(z_j^\sigma, y_{j+1}^\sigma) \\
&= \alpha( x, z^\sigma)\prod_{j=2}^{n-1} \alpha(z_j^\sigma, y_{j+1}^\sigma) 
= \alpha( x, z^\sigma) \prod_{j=1}^{n-2} \alpha(z_{j+1}^\sigma, y_{j+2}^\sigma) \\
&= \alpha( x, z^\sigma) \prod_{j=1}^{n-2} \alpha(y_2^\sigma \dots y_{j+1}^\sigma, y_{j+2}^\sigma) 
= \frac{\alpha( x, z^\sigma)}{\alpha(y_2^\sigma \dots y_n^\sigma, \psi_\delta(x))} \prod_{j=1}^{n-1} \alpha(y_1^\tau \dots y_{j}^\tau, y_{j+1}^\tau) \\
&= \frac{\alpha( x, z^\sigma)}{\alpha(z^\sigma, \psi_\delta(x))} \gamma_\alpha^\tau,
\end{align*}
where we telescope to obtain the fourth equality. Finally, the cocycle identity gives
\begin{align*}
\alpha(x,z^\sigma) \alpha(\Delta, \psi_\delta(x)) &= \alpha(x,z^\sigma) \alpha(xz^\sigma,\psi_\delta(x)) = \alpha(x,z^\sigma \psi_\delta(x)) \alpha(z^\sigma,\psi_\delta(x)) \\
&= \alpha(x,\Delta) \alpha(z^\sigma,\psi_\delta(x)),
\end{align*}
so that
\begin{align*}
\frac{\gamma_\alpha^\sigma}{\gamma_\alpha^\tau} = \frac{\alpha(x,z^\sigma)}{\alpha(z^\sigma,\psi_\delta(x))} = \frac{\alpha(x,\Delta)}{\alpha(\Delta, \psi_\delta(x))}.
\end{align*}
Therefore the twist $\theta \in \GL(V)$ is given by
\begin{gather*}
\theta(x) = (-1)^{n-1} \sgn \psi_x \; \frac{\gamma_\alpha^\sigma}{\gamma_\alpha^\tau} = (-1)^{n-1} \sgn \psi_x \; \frac{\alpha(x,\Delta)}{\alpha(\Delta, \psi_\delta(x))} \psi_\delta(x).\qedhere
\end{gather*}
\end{proof}

In \cite[Theorem 4.4]{morismith}, the authors prove that the \emph{Nakayama automorphism} $\mu_A$ of a Koszul AS regular algebra $A$ of dimension $n$ is given by $\mu_A = (-1)^{n-1} \theta_A$, where $\theta_A$ is the twist for the defining superpotential for $A$. (We caution the reader that we use a different convention for the map $\phi : V^{\otimes n} \to V^{\otimes n}$ compared with \cite{morismith}, and we take our definition for the Nakayama automorphism as in \cite{rrz}. With these conventions, the Nakayama automorphism does satisfy the claimed identity given above.) In particular, $A$ is Calabi--Yau if and only if $\theta = (-1)^{n-1} \id_V$.

\begin{proposition} \label{prop:CY}
The twisted structure algebra $\Bbbk^\alpha M$ is Calabi--Yau if and only if 
\begin{align*}
\Delta \in Z(M) \quad \text{and} \quad \sgn \psi_x = \frac{\alpha(x,\Delta)}{\alpha(\Delta,x)} \quad \text{for all } x \in X.
\end{align*}
If $\alpha$ is trivial, this happens if and only if $\Delta \in Z(M)$ and $\psi_x$ is even for each $x \in X$.
\end{proposition}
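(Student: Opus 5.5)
The plan is to combine Proposition \ref{prop:Twist} with the Mori--Smith criterion stated just before the proposition. That criterion says $\Bbbk^\alpha M$ is Calabi--Yau exactly when $\theta = (-1)^{n-1}\id_V$. By Proposition \ref{prop:Twist}, this holds if and only if, for every $x \in X$,
\begin{align*}
\sgn \psi_x \, \frac{\alpha(x,\Delta)}{\alpha(\Delta,\psi_\delta(x))} \, \psi_\delta(x) = x
\end{align*}
as elements of $V$. Since $X$ is a basis of $V$ and $\psi_\delta(x) \in X$, this equality splits into two conditions. First, $\psi_\delta(x) = x$ for all $x$, i.e.\ $\psi_\delta = \id_X$. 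Second, the scalar must equal $1$, i.e.\ $\sgn\psi_x = \alpha(\Delta,x)/\alpha(x,\Delta)$.

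Next I translate $\psi_\delta = \id_X$ into centrality of $\Delta$. By Lemma \ref{lem:PropertiesOfDelta} (3) we have $x\Delta = \Delta\psi_\delta(x)$. If $\psi_\delta = \id_X$, then $\Delta$ commutes with every generator, so $\Delta \in Z(M)$. Conversely, if $\Delta \in Z(M)$, then $\Delta x = x\Delta = \Delta\psi_\delta(x)$, and left cancellativity of $M$ gives $\psi_\delta(x) = x$. Hence the first condition is equivalent to $\Delta \in Z(M)$.

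Under this condition the scalar condition reads $\sgn\psi_x = \alpha(\Delta,x)/\alpha(x,\Delta)$. Since $\sgn\psi_x = \pm 1$, it is its own inverse, so this is equivalent to $\sgn\psi_x = \alpha(x,\Delta)/\alpha(\Delta,x)$, which is the stated form. When $\alpha$ is trivial the ratio is $1$, and the condition becomes that each $\psi_x$ is even.

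The only delicate point is the sign bookkeeping. The factor $(-1)^{n-1}$ in $\theta$ must cancel against the $(-1)^{n-1}$ in the Mori--Smith criterion $\theta = (-1)^{n-1}\id_V$, and the paper's conventions for $\phi$ and for the Nakayama automorphism must match the statement $\mu_A = (-1)^{n-1}\theta_A$. I will take that identity as given, as the paper does.
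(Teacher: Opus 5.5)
Your proposal is correct and is essentially the paper's proof: combine the Mori--Smith criterion $\theta = (-1)^{n-1}\id_V$ with Proposition~\ref{prop:Twist}, split the resulting identity into $\psi_\delta = \id_X$ and a scalar condition, and use $x\Delta = \Delta\psi_\delta(x)$ to turn the former into $\Delta \in Z(M)$. You also make explicit two steps the paper leaves implicit: left cancellativity for the converse direction of centrality, and $\sgn\psi_x = \pm 1$, which lets you invert the ratio $\alpha(\Delta,x)/\alpha(x,\Delta)$ into the stated form.
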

\begin{proof}
By combining the preceding discussion with Proposition \ref{prop:Twist}, we find that $\Bbbk^\alpha M$ is Calabi--Yau if and only if 
\begin{align*}
\sgn \psi_x \frac{\alpha(x,\Delta)}{\alpha(\Delta,\psi_\delta(x))} \psi_\delta(x) = x
\end{align*}
for all $x \in X$, which happens if and only if
\begin{align*}
\sgn \psi_x \frac{\alpha(x,\Delta)}{\alpha(\Delta,\psi_\delta(x))} = 1 \quad \text{and} \quad \psi_\delta(x) = x \quad \text{for all } x \in X.
\end{align*}
Since $x \Delta = \Delta \psi_\delta(x)$, this is equivalent to the condition given in the statement.
\end{proof}

\begin{example} \label{ex:BuzagloCY}
We return to Example \ref{ex:BuzagloExample}, in which $X = \{x,y,z\}$ and $L_a = (x\;z\;y) \eqqcolon \sigma$ for all $a \in X$. We seek to determine the defining superpotential for $\Bbbk^\alpha M$ and the corresponding twist, where $\alpha$ is the $2$-cocycle defined earlier.

In this example, the $6=3!$ factorisations of $\Delta$ are easily determined, since $\Pi_3(a,b,c) = (a, \sigma(a), \sigma^2(a))$. We therefore have
\begin{gather*}
\begin{array}{ccc}
\Pi_3(x,y,z) = (x,x,x), & \Pi_3(x,z,y) = (x,y,z), & \Pi_3(y,x,z) = (y,z,x), \\
\Pi_3(y,z,x) = (y,y,y), & \Pi_3(z,x,y) = (z,z,z), & \Pi_3(z,y,x) = (z,x,y).
\end{array}
\end{gather*}
Taking into account the sign of the permutation corresponding to the order of $x,y,$ and $z$ in the argument of $\Pi_3$, Proposition \ref{prop:Superpotential} tells us that the twisted superpotential for $\Bbbk M$ is
\begin{align*}
\sfw = x^3+y^3+z^3 - xyz - yzx - zxy \in V^{\otimes 3},
\end{align*}
where $V = \spann X$ and we suppress the tensor symbols. 

If we wish to determine the superpotential when $\alpha$ is nontrivial, then we need to calculate the terms $\gamma_\alpha(a,b,c)$, where $abc$ is a factorisation of $\Delta$. We have
\begin{align*}
\gamma_\alpha(x,x,x) &= \alpha(x,x) \alpha(x^2,x) = (-q)^{B(x,y)} (-q)^{B(xy,z)} = (-q)^2 = q^2, \\
\gamma_\alpha(x,y,z) &= \alpha(x,y) \alpha(xy,z) = (-q)^{B(x,z)} (-q)^{B(xz,y)} = 1.
\end{align*}
We obtain the coefficients for the remaining terms by essentially the same calculations. Therefore, using Proposition \ref{prop:Superpotential} and harmlessly multiplying through by $q^2$ gives a superpotential
\begin{align*}
\sfw_\alpha = x^3+y^3+z^3 + qxyz + qyzx + qzxy \in V^{\otimes 3}.
\end{align*}

We now determine the twist $\theta$ which, by Proposition \ref{prop:Twist} is given by
\begin{align*}
\theta(a) = (-1)^{n-1} \sgn \psi_a \frac{\alpha(a,\Delta)}{\alpha(\Delta,\psi_\delta(a))} \psi_\delta(a).
\end{align*}
As we have already seen, the Garside element $\Delta \in M$ satisfies
\begin{align*}
\Delta = x^3 = y^3 = z^3,
\end{align*}
and hence is central in $M$. Therefore, by Lemma \ref{lem:PropertiesOfDelta} (3), $\psi_\delta = \id_X$. We also have $\psi_a = L_a = (x\;z\;y)$ for all $a \in X$, which is an even permutation. Finally, for all $a \in X$ we have
\begin{gather*}
\alpha(a,\Delta) = (-q)^{B(a,(x\;y\;z) \cdot xyz)} = -q, \\
\alpha(\Delta,a) = (-q)^{B(xyz,(x\;y\;z)^3 \cdot a)} = -q.
\end{gather*}
It follows that $\theta = \id_V$, and hence $\Bbbk^\alpha M$ is Calabi--Yau by \cite[Theorem 4.4]{morismith}, or by applying Proposition \ref{prop:CY}.
\end{example}

\subsection{The homological determinant}
The \emph{homological determinant}, $\operatorname{hdet}_A : H \to \Bbbk$, is an important invariant associated to the action of a Hopf algebra $H$ on an AS Gorenstein algebra $A$. It is a generalisation of the matrix determinant in the sense that, if $g \in \GL(V)$ acts naturally on the polynomial ring $R = \Sym_\Bbbk(V)$, then $\operatorname{hdet}_R(g) = \det(g)$. The homological determinant controls a number of important properties about $A$ under the action from $H$, such as when the invariant ring $A^H$ is AS Gorenstein, or when the so-called \emph{Auslander map} is an isomorphism. In general, the homological determinant is difficult to calculate but, for a superpotential algebra $A = \mathscr{D}(\sfw,i)$ it is known, by \cite{morismith,crawfordsuperpotentials}, that we have
\begin{align*}
h \cdot \sfw = \operatorname{hdet}_A(h) \sfw \quad \text{for all } h \in H.
\end{align*}

\begin{lemma}
Let $A = \Bbbk^\alpha M$ and $G_k = G(X,r,k)$ be a finite Dehornoy quotient group, so that $A$ is $G_k$-graded. The corresponding action of $H = (\Bbbk G_k)^*$ has homological determinant $\operatorname{hdet}_A : A \to \Bbbk$ satisfying 
\begin{align*}
\operatorname{hdet_A}(\phi_g) = \left\{
\begin{array}{cl}
1 & \text{if } g = \pi_k(\Delta), \\
0 & \text{otherwise},
\end{array}
\right.
\end{align*}
where $\{\phi_g \mid g \in G\}$ is the basis of $H$ dual to the standard basis of $\Bbbk G$.
\end{lemma}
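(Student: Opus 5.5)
The plan is to use the superpotential characterisation of the homological determinant recalled just before the statement: for $A = \mathscr{D}(\sfw_\alpha, n-2)$ we have $h \cdot \sfw_\alpha = \operatorname{hdet}_A(h)\, \sfw_\alpha$ for all $h \in H$. We then compute $\phi_g \cdot \sfw_\alpha$ directly, using the explicit formula for $\sfw_\alpha$ from Proposition \ref{prop:Superpotential}.

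\textbf{Step 1: the action.} The action of $H = (\Bbbk G_k)^*$ on $V^{\otimes n}$ is the one induced from the grading. On $V$ we have $\phi_g \cdot x = \delta_{g,\pi_k(x)}\, x$. On tensors, using the coproduct $\Delta(\phi_g) = \sum_{h h' = g} \phi_h \otimes \phi_{h'}$, the element $\phi_g$ acts on a pure tensor $v_1 \otimes \dots \otimes v_n$ of $G_k$-homogeneous vectors as multiplication by $1$ if $\deg v_1 \cdots \deg v_n = g$, and by $0$ otherwise. In other words, $\phi_g$ is the projection onto the $g$-graded component of $V^{\otimes n}$ for the grading $\deg(x_1 \otimes \dots \otimes x_n) = \pi_k(x_1)\cdots\pi_k(x_n) = \pi_k(x_1 \cdots x_n)$.

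\textbf{Step 2: grading of the superpotential.} Each summand $\sfw_\sigma = y_1^\sigma \otimes \dots \otimes y_n^\sigma$ of $\sfw_\alpha$ satisfies $y_1^\sigma \cdots y_n^\sigma = \Delta$ in $M$, by Lemma \ref{lem:PropertiesOfDelta}. Hence every summand has $G_k$-degree $\pi_k(\Delta)$, and $\sfw_\alpha$ is homogeneous of that degree. Since $\sfw_\alpha \neq 0$ (it spans the one-dimensional space $\bigcap_i W_i$), we get
\[
\phi_g \cdot \sfw_\alpha = \delta_{g,\pi_k(\Delta)}\, \sfw_\alpha,
\]
and the claimed formula follows.

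\textbf{Main obstacle.} The only real subtlety is conventions. We must check that the cited result of \cite{morismith,crawfordsuperpotentials} uses the same identification of the $H$-action on $V^{\otimes n}$ as the one above. We must also settle whether the relevant grading on $A$ is by $\pi_k(x)$ or by its inverse, which matters because the paper grades $A^!$ by $\pi_k(x)^{-1}$. If the cited statement is phrased via the dual superpotential or $A^!$, the degree would become $\pi_k(\Delta)^{-1}$, and we would need to reconcile this using the antipode $S(\phi_g) = \phi_{g^{-1}}$. We would verify this against the convention used in \cite{crawfordsuperpotentials}, and state the result with $\pi_k(\Delta)$ as claimed, since $\sfw_\alpha \in V^{\otimes n}$ lives on the $A$ side.
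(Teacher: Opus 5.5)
Your proposal is correct and follows essentially the paper's route. The paper simply cites \cite[Corollary 3.3]{crawfordsuperpotentials}, which is the group-grading case of $h \cdot \sfw = \operatorname{hdet}_A(h)\,\sfw$, and observes that every summand of $\sfw_\alpha$ is a factorisation of $\Delta$, so $\sfw_\alpha$ is homogeneous of $G_k$-degree $\pi_k(\Delta)$; your Steps 1 and 2 unpack exactly that computation directly.
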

\begin{proof}
This follows immediately from \cite[Corollary 3.3]{crawfordsuperpotentials}, since $\sfw_\alpha$ is a linear combination of all factorisations of $\Delta$ in $M$.
\end{proof}


\section{Examples and non-examples}

\subsection{Examples in the recent literature}
As mentioned in the introduction, the results of \cite{gateva,dehornoy} and the present article are applicable to a number of recent papers which study of dual reflection groups. We already saw in Example \ref{ex:BuzagloExample} that \cite[Theorem 6.3]{buzaglo} follows quickly from our results. We now draw attention to some further recent examples in the literature.

\begin{example} \label{ex:Goetz1}
In \cite[Section 3.1]{goetz}, the authors consider, albeit with a different presentation, the group
\begin{align*}
G = M_{16} = \langle \, g,h \mid hg=g^5h, \; g^8=h^2=1 \,\rangle,
\end{align*}
and the algebra
\begin{align*}
R = \frac{\Bbbk\langle x_1,x_2,x_3,x_4\rangle}{
\left\langle
\begin{array}{ccc}
x_1^2=x_4^2 & x_1 x_4 = x_3 x_2 & x_1 x_3 = x_2 x_4 \\
x_2^2=x_3^2 & x_4 x_1 = x_2 x_3 & x_3 x_1 = x_4 x_2 
\end{array}
\right\rangle
}
\end{align*}
which is $G$-graded via
\begin{align*}
\deg_G x_1 = g, \quad \deg_G x_2 = g^3, \quad \deg_G x_3 = gh, \quad \deg_G x_4 = g^7h. 
\end{align*}
They go on to show that $R$ is AS regular and that $R_{1_G}$ is a polynomial ring in four variables, so that $G$ is a dual reflection group for $R$.

One can instead prove this result by appealing to Theorems \ref{thm:Gateva} and \ref{thm:DualReflection}. The solution to the Yang--Baxter equation, or equivalently, the cycle set, which gives rise to this algebra can be deduced from the relations in $A$. Indeed, the set $X = \{x_1, x_2, x_3, x_4\}$ is a cycle set with
\begin{align*}
L_{x_1} = L_{x_3} = (x_1\;x_2\;x_3\;x_4), \quad 
L_{x_2} = L_{x_4} = (x_1\;x_4\;x_3\;x_2),
\end{align*}
such that, writing $M$ for the structure monoid associated to this solution, we have $R = \Bbbk M$. This algebra is AS regular of dimension $4$ by Theorem \ref{thm:Gateva}. Moreover, this solution is of minimal Dehornoy class $2$, and it is straightforward to calculate that the finite Dehornoy quotient groups are 
\begin{align*}
G_n = \left\langle a,b,c,d \hspace{4pt} \bigg| \hspace{-1pt}
\begin{array}{cccc}
a^2=d^2 & ad=cb & ac=bd & (ab)^n=(ba)^n=1 \\
b^2=c^2 & da=bc & ca=db & (cd)^n=(dc)^n=1
\end{array} \hspace{-1pt}
\right\rangle
\end{align*}
of order $16n^2$. By Theorem \ref{thm:DualReflection}, the subring of dual invariants is a polynomial ring in four variables. When $n=1$, this group is isomorphic to $M_{16}$, so we recover \cite[Proposition 3.1.2]{goetz}. For $n \geqslant 2$, this result was proved in \cite[Proposition 7.5]{buzaglo}.
\end{example}

\begin{example} \label{ex:Goetz2}
In \cite[Section 3.2]{goetz}, the authors consider, again with an alternative presentation, the group
\begin{align*}
G = SD_{16} = \langle\, g,h \mid hg=g^3h,\; g^8=h^2=1 \,\rangle,
\end{align*}
as well as the algebra
\begin{align*}
S = \frac{\Bbbk\langle x_1,x_2,x_3,x_4\rangle}{
\left\langle
\begin{array}{ccc}
x_1x_2=x_3^2 & x_1 x_3 = x_2 x_4 & x_1 x_4 = x_4 x_2 \\
x_2x_1=x_4^2 & x_3 x_2 = x_4 x_1 & x_2 x_3 = x_3 x_1 
\end{array}
\right\rangle
}
\end{align*}
which has a $G$-grading given by
\begin{align*}
\deg_G x_1 = h, \quad \deg_G x_2 = g^6h, \quad \deg_G x_3 = g, \quad \deg_G x_4 = g^7. 
\end{align*}
The algebra $S$ was mentioned in \cite[Remark 4.2 (d)]{tomlin} (see \cite[p.~2]{goetz}), and whether or not it was AS regular was unknown for a period of time. A proof of its AS regularity is given in \cite[Theorem 3.2.8]{goetz}. Presumably unknown to these authors, a monoid with the same relations was also considered in \cite[Example 1.3]{chouraqui}. In particular, there is a solution to the Yang--Baxter equation for which the associated structure algebra is $S$, and hence $S$ is AS regular by \cite[Theorem 1.5]{gateva}.

In this case, the finite Dehornoy quotient groups are
\begin{align*}
G_n = \left\langle a,b,c,d \hspace{4pt} \bigg| \hspace{-1pt}
\begin{array}{cccc}
ab=c^2 & ac=bd & ad=db & (a^2)^n=(b^2)^n=1 \\
ba=d^2 & cb=da & bc=ca & (cd)^n=(dc)^n=1
\end{array} \hspace{-1pt}
\right\rangle
\end{align*}
of order $16n^2$. Again, the subring of dual invariants is a polynomial ring in four variables by Theorem \ref{thm:DualReflection}, and the $n=1$ case recovers \cite[Proposition 3.2.10]{goetz}. As in the previous example, the $n \geqslant 2$ case is \cite[Proposition 7.5]{buzaglo}.

The authors of \cite{goetz} also consider a deformation $T$ of $S$, with presentation
\begin{align*}
T = \frac{\Bbbk\langle x_1,x_2,x_3,x_4\rangle}{
\left\langle
\begin{array}{ccc}
x_1x_2=x_3^2 & x_1 x_3 = x_2 x_4 & x_1 x_4 = -x_4 x_2 \\
x_2x_1=-x_4^2 & x_3 x_2 = x_4 x_1 & x_2 x_3 = x_3 x_1 
\end{array}
\right\rangle
}.
\end{align*}
Writing $M$ for the structure monoid associated to the corresponding solution to the Yang--Baxter equation, we claim that $T \cong \Bbbk^\alpha M$ for a suitably chosen $2$-cocycle $\alpha : M \times M \to \Bbbk^\times$. To construct this cocycle, we note that the relations in $S$ which are deformed to give $T$ are 
\begin{align*}
\Pi_2(x_1,x_4) = \Pi_2(x_4,x_1) \quad \text{and} \quad \Pi_2(x_2,x_4) = \Pi_2(x_4,x_2).
\end{align*}
This suggests that $\alpha$ should identify when an element of $X^m$ has copies of $x_1$ or $x_2$ appearing to the right of copies of $x_4$. To this end, we modify the inversion number used in the definition of $\rho$ as follows. If $C = (x_{k_1}, \dots, x_{k_m}) \in X^m$ then we define
\begin{align*}
\operatorname{inv}_T(C) \coloneqq |\{(i,j) \mid i>j \text{ and } x_{k_i} \in \{x_1, x_2\} \text{ and } x_{k_j}=x_4 \}|.
\end{align*}
We claim the the following defines a $2$-cocycle on $M$:
\begin{align*}
\alpha : M \times M \to \Bbbk^\times, \quad \alpha(a,b) = (-1)^{\operatorname{inv}_T\!\big(\Pi^{-1}_{\ell(ab)}(\NF(a) \frown \NF(b))\big)}.
\end{align*}
One can show that this is a $2$-cocycle by using an argument similar to the one in Example \ref{ex:NormalForm}. Moreover, we have, for example
\begin{gather*}
\alpha(x_2,x_1) = (-1)^{\operatorname{inv}_T\!\big(\Pi^{-1}_{2}(x_2,x_1)\big)} = (-1)^{\operatorname{inv}_T(x_2,x_4)} = 1 \\
\alpha(x_4,x_4) = (-1)^{\operatorname{inv}_T\!\big(\Pi^{-1}_{2}(x_4,x_4)\big)} = (-1)^{\operatorname{inv}_T(x_4,x_2)} = -1,
\end{gather*}
so that $x_2x_1 = -x_4x_4$ in $\Bbbk^\alpha M$. One can check that the other relations of $T$ hold, so that $\Bbbk^\alpha M \cong T$. It follows that $T$ is AS regular, and that $G_n$ is a dual reflection group for $T$, recovering \cite[Theorem 3.2.8, Proposition 3.2.11]{goetz} and \cite[Proposition 7.5]{buzaglo}. In particular, since the subring of dual invariants is not commutative when $n=1$, we know that $\alpha$ does not descend to a $2$-cocycle on $G_1 = SD_{16}$.
\end{example}

\subsection{Dihedral Dehornoy quotient groups}
In \cite[Example 2.1.9]{goetz}, the authors show that the dihedral group of order $8$, which we denote $D_4$, is a dual reflection group. They then remark that they are not aware of any examples of dihedral groups $D_m$ of order $>8$ which are dual reflection groups. We show that there are no such examples among twisted structure algebras.

We first require an easy lemma.

\begin{lemma} \label{lem:SnUpperBound}
Suppose that $\tau \in S_n$. Then $\tau$ has order at most $3^{n/3}$.
\end{lemma}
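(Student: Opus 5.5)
The plan is to reduce the bound to a statement about integer partitions. Write $\tau$ as a product of disjoint cycles of lengths $m_1, \dots, m_k$ with $m_1 + \dots + m_k = n$; fixed points are cycles of length $1$. The order of $\tau$ is $\operatorname{lcm}(m_1, \dots, m_k)$, which is at most $m_1 m_2 \cdots m_k$. It therefore suffices to show that any positive integers with $\sum m_i = n$ satisfy
\begin{align*}
\prod_{i=1}^k m_i \leqslant 3^{n/3},
\end{align*}
and this in turn follows from the single-variable inequality $m \leqslant 3^{m/3}$ for every positive integer $m$. Taking the product over $i$ then gives $\prod m_i \leqslant 3^{\sum m_i/3} = 3^{n/3}$.

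To prove $m \leqslant 3^{m/3}$, equivalently $m^{1/m} \leqslant 3^{1/3}$, consider $f(t) = (\ln t)/t$ on $t > 0$. Its derivative is $(1 - \ln t)/t^2$, so $f$ increases on $(0,e]$ and decreases on $[e,\infty)$. Over the integers, the maximum of $f$ is therefore attained at $m=2$ or $m=3$. Since $2^{1/2} = 8^{1/6} < 9^{1/6} = 3^{1/3}$, the maximum occurs at $m=3$, which gives $f(m) \leqslant f(3)$ for all $m \geqslant 1$. The case $m=1$ holds trivially.

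There is no genuine obstacle. The only point needing care is the integer comparison between $2$ and $3$ (and noting that $1 \leqslant 3^{1/3}$), because the real maximiser of $f$ is $e$, which is not an integer. An alternative avoiding calculus is induction: $m \leqslant 3^{m/3}$ can be checked directly for $m \leqslant 3$, and for $m \geqslant 3$ one has $(m+1)/m \leqslant 4/3 < 3^{1/3}$, so the inequality propagates from $m$ to $m+1$.
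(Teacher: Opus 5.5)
Your proposal is correct and follows the paper's proof: bound the order $\operatorname{lcm}(\ell_1,\dots,\ell_m)$ by the product of the cycle lengths, then apply $\ell \leqslant 3^{\ell/3}$ to each factor. The one addition is that you actually prove $\ell \leqslant 3^{\ell/3}$, by the calculus argument or the induction, whereas the paper simply asserts it.
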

\begin{proof}
Suppose that $\tau$ is a product of disjoint cycles of lengths $\ell_1, \dots, \ell_m$, with $\ell_1 + \dots +\ell_m = n$. Since $\ell \leqslant 3^{\ell/3}$ for all $\ell \in \NN$, we have
\begin{align*}
|\tau| =& \operatorname{lcm}(\ell_1, \dots, \ell_m) \leqslant \ell_1 \dots \ell_m \leqslant 3^{\ell_1/3} \dots 3^{\ell_m/3} = 3^{(\ell_1+\dots+\ell_m)/3} = 3^{n/3}. \qedhere
\end{align*}
\end{proof}

The above upper bound is crude, but sufficient for our purposes.

\begin{proposition}
Suppose that $r : X^2 \to X^2$ is a solution to the Yang--Baxter equation of Dehornoy class $c$ and that $G(X,r,k)$ is dihedral for some $k \geqslant 1$. Then $|X| = 3$, $c=2$, and $k=1$.
\end{proposition}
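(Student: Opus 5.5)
The plan is to compare the largest element order in $G_k \coloneqq G(X,r,k)$ with that of a dihedral group of the same order. We have $|G_k| = (ck)^n$ with $n = |X|$, and a dihedral group of order $2m \geqslant 6$ is nonabelian and has an element of order $m$. So it suffices to show that every element of $G_k$ has order at most roughly $ck \cdot 3^{n/3}$. This is then incompatible with having an element of order $(ck)^n/2$ unless $n$ and $ck$ are very small.

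\textbf{Step 1: the degenerate case $c=1$.} By Lemma \ref{lem:Centre} and \cite[Proposition 1.5, Lemma 4.4]{dehornoy}, $c=1$ should force $r(x,y)=(y,x)$. Then $M$ is free abelian and $G_k \cong C_k^n$, which is abelian. So it is dihedral at most in the Klein four case, and I would exclude that by convention or treat it separately. Henceforth assume $c \geqslant 2$.

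\textbf{Step 2: bound element orders.} By Lemma \ref{lem:PsiToProduct}, $\nu(a)\nu(b) = \nu(a\,\psi_a(b))$ and $\psi_{a\psi_a(b)} = \psi_{\psi_a(b)}\circ\psi_a$. I expect this to show that $\nu(a) \mapsto \psi_a$ is multiplicative, i.e.\ a monoid homomorphism $M \to \Sym(X)$ (the usual brace map $\lambda$). I would check that it descends to $G_k$, using that $\psi_{x^c} = \id_X$ by \cite[Lemma 5.5]{dehornoy}.

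Its kernel in $G_k$ is the image of $\nu(K)$ with $K$ as in Theorem \ref{thm:Centre}. By Theorem \ref{thm:Centre}(3), $\nu|_K$ is multiplicative, so this image is an abelian quotient of $K$. Every $b \in K$ satisfies $\nu(b)^{ck} = \nu(b^{ck}) = \prod_x \nu(x^{c})^{k\,b_x} = 1$ in $G_k$. This is a commutative computation inside $\nu(K)$, using that $\nu(x^c)$ lies in $\nu(K)$.

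Hence for $g = \pi_k(\nu(a))$, with $t$ the order of $\psi_a$, the element $g^t$ lies in this kernel and so has order dividing $ck$. Lemma \ref{lem:SnUpperBound} gives $t \leqslant 3^{n/3}$, so $|g| \leqslant ck\,3^{n/3}$.

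\textbf{Step 3: numerics.} If $G_k$ is dihedral of order $2m \geqslant 6$, then Step 2 gives $(ck)^n/2 \leqslant ck\,3^{n/3}$, that is, $(ck)^{n-1} \leqslant 2\cdot 3^{n/3}$. With $ck \geqslant 2$:
\begin{itemize}
\item $n \geqslant 5$ is impossible, since already $2^4 = 16 > 2 \cdot 3^{5/3}$;
\item $n = 4$ forces $ck = 2$;
\item $n = 3$ forces $ck = 2$, i.e.\ $c = 2$ and $k = 1$, which is the claimed conclusion;
\item $n = 2$ forces $ck \leqslant 4$;
\item $n = 1$ gives a cyclic group.
\end{itemize}

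\textbf{Step 4: residual cases.} For $n=2$, the only nondegenerate involutive solutions are the trivial one and the solution of the first example. There $G_k = \langle x,y \mid x^2=y^2,\ (xy)^k=(yx)^k=1 \rangle$. For $k=1$ it is cyclic of order $4$. For $k=2$ it has order $16$, and I would show it is not $D_8$: the bound from Step 2 gives element orders at most $2\cdot 2 = 4 < 8$.

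For $n=4$ with $ck=2$, the group has order $16$, and being $D_8$ requires an element of order $8$. Step 2 then forces $\psi_a$ to be a $4$-cycle and $g^4$ nontrivial. I would refine the bound by computing $g^t$ directly, using $\nu(a)^t = \nu\big(a\,\psi_a(a)\cdots\psi_a^{t-1}(a)\big)$. If $\psi_a$ is a $4$-cycle on $X$, this product is a multiple of $\delta$ modulo $c$-th powers, which I expect to give order $\leqslant 4$. Failing that, I would fall back on the small classification of $4$-element cycle sets.

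\textbf{Main obstacle.} I expect the hardest parts to be the two places where the argument is least routine. The first is making Step 2 rigorous, namely that $\psi$ is a genuine homomorphism on $G_k$ with abelian kernel of exponent $ck$. The second is disposing of the borderline $n=4$, $ck=2$ case.
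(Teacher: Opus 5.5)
Your Step 2 is sound. It is a self-contained derivation of the bound the paper gets from Dehornoy's embedding $G_k \hookrightarrow (\ZZ/d\ZZ)^n \rtimes S_n$: the map $\pi_k(\nu(a)) \mapsto \psi_a$ is well defined on $G_k$, and its kernel $\pi_k(\nu(K))$ is abelian of exponent dividing $ck$. One slip: Lemma~\ref{lem:PsiToProduct} actually gives $\nu(a)\nu(b) = \nu(a\,\psi_a^{-1}(b))$ and $\psi_{a\psi_a^{-1}(b)} = \psi_b \circ \psi_a$. So the map is an anti-homomorphism, which is harmless for orders. Your numerics in Step 3 match the paper's. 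The gap is in Step 4: neither residual case can be settled by element orders, and both of your specific claims there are false.

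\textbf{The case $n=2$, $k=2$.} Here $ck = 4$ and $t \leqslant 2$, so Step 2 only gives $|g| \leqslant 8$, not $2\cdot 2 = 4$. In fact $\pi_2(x)$ has order exactly $8$. Indeed $x^2 = \Delta = \nu(xy)$ with $xy \in K$, so $x^4 = \nu(x^2y^2)$. This is not of the form $\nu(a^4)$, so it is nontrivial in $G_2$.

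This case is easily repaired differently. Since $x^2 = y^2$ is central in $M$, $\pi_2(x)^2$ is a central element of order $4$, whereas $Z(D_8)$ has order $2$. In fact $G_2 \cong M_{16}$. The paper instead cites Rump's result on braces with additive group $(\ZZ/4\ZZ)^2$.

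\textbf{The case $n=4$, $ck=2$.} Your refined computation gives $\nu(a)^4 = \nu(\delta^{\ell(a)})$ when $\psi_a$ is a $4$-cycle. For $\ell(a)$ odd this is not a square, so $\pi_1(\nu(a))$ has order exactly $8$, not at most $4$. This genuinely occurs. In Example~\ref{ex:Goetz1}, $\psi_{x_1} = L_{x_1}$ is a $4$-cycle and $G_1 \cong M_{16}$ contains $\pi_1(x_1)$ of order $8$. Likewise $SD_{16}$ arises in Example~\ref{ex:Goetz2}.

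So no order bound can separate $D_8$ from the order-$16$ groups that actually occur here, and you need structural input. The paper uses the embedding to make $(\ZZ/2\ZZ)^4$ a brace with multiplicative group $G_1$, then invokes \cite[Proposition 4]{rump2020} to rule out $D_8$. Your fallback of checking all four-element cycle sets could work in principle. However, that check is the entire content of this case, and you have not carried it out.
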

\begin{proof}
Write $n=|X|$, $d=ck$ and $G_k = G(X,r,k)$. If we had $c=1$, then $r$ would be the trivial flip solution and hence $G$ would be abelian, so we must have $c \geqslant 2$, and hence $d \geqslant 2$.

Suppose that 
\begin{align*}
G_k \cong D_m \coloneqq \langle\,g,h \mid hg=g^{m-1}h,\; g^m=h^2=1 \;\rangle
\end{align*}
for some $m \geqslant 2$, where $|D_m| = 2m$. Therefore $2m = d^n$, so $|g| = m = \frac{1}{2} d^n$.

By \cite[Corollary 5.11]{dehornoy}, there is an embedding $G_k \hookrightarrow (\ZZ/d\ZZ)^n \rtimes S_n$ of $G_k$ into a wreath product. Suppose that the element of $G_k$ corresponding to $g \in D_m$ has image $(v,\tau)$ under this embedding. Then
\begin{align*}
(v,\tau)^{d |\tau|} = (v',1)^d = (1,1),
\end{align*}
so $|g| \leqslant d |\tau| \leqslant d \, 3^{n/3}$, by Lemma \ref{lem:SnUpperBound}. Combining this with the previous paragraph gives 
\begin{align*}
\frac{1}{2} d^n \leqslant d\, 3^{n/3},
\end{align*}
and hence
\begin{align*}
2^{n-2} \leqslant \frac{1}{2} d^{n-1} \leqslant 3^{n/3}.
\end{align*}
This forces $n \leqslant 4$. 

If $n=4$, then $\tau \in S_4$ so $|\tau| \leqslant 4$, which means
\begin{align*}
\frac{1}{2} d^4 = |g| \leqslant d |\tau| \leqslant 4d,
\end{align*}
and hence $d \leqslant 2$. Since $c\geqslant 2$, we have $c=2$ and $k=1$. In particular, the embedding $G_1 \hookrightarrow (\ZZ/2\ZZ)^4 \rtimes S_4$ gives $(\ZZ/2\ZZ)^4$ the structure of a so-called \emph{right brace} whose multiplicative group is $D_8$, but this is impossible by \cite[Proposition 4]{rump2020}.

If $n=2$, we instead get $d \leqslant 4$, so that $d=2$ or $d=4$. If $d=4$, then the embedding $G_k \hookrightarrow (\ZZ/4\ZZ)^2 \rtimes S_2$ gives $(\ZZ/4\ZZ)^2$ the structure of a right brace with multiplicative group $D_8$, which is again impossible by \cite[Proposition 2]{rump2020}. Finally, if $n \leq 2$ then $d=2$, so $|G_k| = 4$ and $G_k$ is not dihedral.

Finally, if $n=3$, then the same argument as above gives $d \leqslant 2$, so $c=2$ and $k=1$. By \cite[Example 2.1.9]{goetz}, this scenario can occur; for completeness, we list the possibilities. There are exactly five cycle set structures on $X=\{x,y,z\}$ (equivalently, solutions to the Yang--Baxter equation on a set of size 3), up to isomorphism. Of these, when $c=2$ and $k=1$, only two give $G_1 \cong D_4$. They are
\begin{align*}
\begin{array}{ccc|c|c}
L_x & L_y & L_z & \text{Relations in } M(X,r) & \text{Additional relations in } G_1 \\\hline
& & & & \\[-10pt]
\id & \id & (x\;y) & xy=yx,\;\;xz=zy,\;\;yz=zx & x^2=y^2=z^2=1\\
(y\;z) & (y\;z) & (y\;z) & xz=yx,\;\;xy=zx,\;\;y^2=z^2 & x^2=yz=zy=1
\end{array}
\end{align*}
The first of these corresponds to the group in \cite[Example 2.1.9]{goetz}, while the second gives a different $D_4$ coaction.
\end{proof}


\bibliographystyle{amsalpha}
\bibliography{bibliography}
\end{document}